\documentclass[10pt,a4paper]{article}

\usepackage{amsmath,amssymb}
\usepackage{theorem,ifthen}
\usepackage{graphicx}
\usepackage{color}

\addtolength{\oddsidemargin}{-1.5ex}
\addtolength{\textwidth}{3ex}

\newtheorem{prop}{Proposition}[section]
\newtheorem{lemma}[prop]{Lemma}
\newtheorem{theo}[prop]{Theorem}
\newtheorem{coroll}[prop]{Corollary}
{\theorembodyfont{\rmfamily}
 \newtheorem{remark}[prop]{Remark}
 \newtheorem{definition}[prop]{Definition}
 \newtheorem{example}[prop]{Example}
}
\newcommand{\qedup}{\par\vspace{-1.5em}}

\newenvironment{proof}[1][Proof]{%
  \begin{list}{}{%
      \settowidth{\labelwidth}{\textit{#1.}}%
      \setlength{\itemindent}{\labelwidth}%
      \addtolength{\itemindent}{\labelsep}%
      \setlength{\leftmargin}{0pt}
      \setlength{\parsep}{0pt}
      \setlength{\listparindent}{\parindent}
    }\item[\textit{#1.}]}
%  {\hspace*{\fill}\nolinebreak[1]\hspace*{\fill}$\Box$ \end{list}}
  {\hspace*{\fill}$\Box$ \end{list}}
\newenvironment{proof*}[1][Proof]{%
  \begin{list}{}{%
      \settowidth{\labelwidth}{\textit{#1.}}%
      \setlength{\itemindent}{\labelwidth}%
      \addtolength{\itemindent}{\labelsep}%
      \setlength{\leftmargin}{0pt}
    }\item[\textit{#1.}]}
  {\end{list}}
\newenvironment{definition*}{\begin{definition}}
  {\hspace*{\fill}$\lrcorner$ \end{definition}}
\newenvironment{remark*}{\begin{emark}}
  {\hspace*{\fill}$\lrcorner$ \end{remark}}
\newenvironment{example*}{\begin{example}}
  {\hspace*{\fill}$\lrcorner$ \end{example}}

\newcommand{\braket}[2]{[#1|#2]}
\newcommand{\Braket}[2]{\Bigl[#1\Big|#2\Bigr]}
\newcommand{\Jorth}{{\langle\perp\rangle}}

\newcommand{\pmat}[1]{\begin{pmatrix}#1\end{pmatrix}}
\newcommand{\set}[2]{\{#1\,|\,#2\}}
\newcommand{\bigset}[2]{\bigl\{#1\,\big|\,#2\bigr\}}

\DeclareMathOperator{\mspan}{span}
\DeclareMathOperator{\Real}{Re}
\DeclareMathOperator{\Imag}{Im}

\newcommand{\proj}{\mathrm{pr}}

\newcommand{\N}{\mathbb{N}}

\newcommand{\R}{\mathbb{R}}
\newcommand{\C}{\mathbb{C}}

\newcommand{\mdef}{\mathcal{D}}
\newcommand{\range}{\mathcal{R}}
\newcommand{\eps}{\varepsilon}

\newcommand{\graph}{G}
\newcommand{\invgraph}{G_{\mathrm{inv}}}
\newcommand{\ri}{\mathrm{i}}

\newcommand{\cN}{\mathcal{N}}

\newcommand\wt{\widetilde} 

\begin{document}

\title{Dichotomous Hamiltonians with Unbounded Entries and Solutions
  of Riccati Equations
}
\author{Christiane Tretter\footnote{
    Institute of Mathematics,
    University of Bern, 
    Sidlerstrasse 5, 
    CH-3012 Bern, Switzerland, 
    \texttt{tretter@math.unibe.ch}} ,
  Christian Wyss\footnote{
    Department of
    Mathematics and Informatics,
    University of Wuppertal,
    Gau\ss stra\ss e 20,
    D-42097 Wuppertal,
    Germany,
    \texttt{wyss@math.uni-wuppertal.de}}}
\date{\today}
\maketitle

\thanks{\centerline{\emph{Dedicated to Rien Kaashoek on the occasion of his 75th birthday}}}

\begin{center}
  \parbox{11cm}{\small \textbf{Abstract.}  An operator Riccati
    equation from systems theory is considered in the case that all
    entries of the associated Hamiltonian are unbounded.  Using a
    certain dichotomy property of the Hamiltonian and its symmetry
    with respect to two different indefinite inner products, we prove
    the existence of nonnegative and nonpositive solutions of the
    Riccati equation.  Moreover, conditions for the boundedness 
    and uniqueness
    of these solutions are established.
  } \\[1.5ex]
  \parbox{11cm}{\small\textbf{Keywords.} Riccati equation,
    Hamiltonian, dichotomous, bisectorial, invariant subspace,
    $p$-subordinate perturbation.} \\[1.5ex]
  \parbox{11cm}{\small\textbf{Mathematics Subject Classification.}
    47A62, 47B44, 47N70.}
\end{center}
\vspace{0em}

\section{Introduction}

In this paper we prove the existence of solutions of algebraic Riccati equations
\begin{equation}\label{eq:ricc}
  A^*X+XA+XBX-C=0
\end{equation}
on a Hilbert space $H$ where all coefficients are unbounded linear operators and $B$, $C$ are nonnegative.
Riccati equations of this type, and in particular their nonnegative solutions,
are of central importance in systems theory,
see e.g.\ \cite{curtain-zwart,lancaster-rodman} and the references therein;
recently, the case of unbounded $B$ and $C$ has gained much attention 
\cite{lasiecka-triggiani,opmeer-curtain04,staffans05,weiss-weiss}.
%% For the case that $A$, $B$ and $C$ are unbounded, we prove the existence
%% of a nonnegative and a nonpositive solution $X$.
%% Moreover we obtain conditions which ensure that these solutions are bounded. 

The existence of solutions $X$ of the Riccati equation \eqref{eq:ricc} is intimately related to the  existence of 
graph subspaces $G(X) = \set{(u, Xu)}{u \in \mdef(X)}$ that are invariant under the associated 
\emph{Hamiltonian} % operator matrix
\begin{equation}\label{eq:ham-intro}
T=\pmat{A&B\\C&-A^*}.
\end{equation}
Moreover, properties of a solution $X$ of \eqref{eq:ricc} such as selfadjointness, nonnegativity or boundedness 
can be characterised by properties of the corresponding graph subspace $G(X)$ with respect to certain indefinite inner products.

In the finite-dimensional case, the connection between solutions of
Riccati equations and invariant graph subspaces of Hamiltonians led to
an extensive description of all solutions, see
e.g.\ \cite{bittanti-laub-willems,lancaster-rodman}. In the
infinite-dimensional case, the existence of invariant subspaces is a
more subtle problem since the Hamiltonian $T$ is not normal.  If all
coefficients of the Riccati equation, and hence all entries of $T$,
are unbounded, the spectrum of the Hamiltonian may touch at infinity
and there are neither the spectral theorem nor Riesz projections
available to define invariant subspaces.

There are two different approaches to overcome these difficulties
which require different additional properties of the Hamiltonian $T$. In
\cite{kuiper-zwart,wyss-rinvsubham,wyss-unbctrlham} infinitely many
solutions of \eqref{eq:ricc} were constructed in the case that $T$ has
a Riesz basis of (possibly generalised) eigenvectors.  In
\cite{langer-ran-rotten,bubak-mee-ran} the existence of a nonnegative and a
nonpositive solution, and conditions for their boundedness, were
obtained in the case that $T$ is dichotomous and $B$, $C$ are bounded.

In the present paper, we prove the existence of solutions of the
Riccati equation \eqref{eq:ricc}, and characterise their properties,
without the assumptions that $T$ has a Riesz basis of generalised
eigenvectors or that $B$, $C$ are bounded. 

To this end we follow the dichotomy approach, but 
essentially new techniques are needed to establish the boundedness of solutions of the Riccati 
equation in the presence of unbounded $B$ and $C$.
In our main result
(Theorem~\ref{theo:ham-ricc}) we show that if $T$ is a nonnegative
diagonally $p$-dominant Hamiltonian (i.e.\ $B$, $C$ are nonnegative
and $p$-subordinate to $A^*$, $A$, respectively, with $p<1$), the
state operator $A$ is sectorially dichotomous, and
$\bigcap_{t\in\R}\ker (B(A^*+\ri t)^{-1})=\{0\}$, then there exists a
nonnegative solution $X_+$ and a nonpositive solution $X_-$ of the
Riccati equation \eqref{eq:ricc} or, more precisely, of
\begin{equation}\label{eq:riccati-intro}
    (A^*X_\pm+X_\pm(A+BX_\pm)-C)u=0, \qquad
    u\in\mdef(A)\cap X_\pm^{-1}\mdef(A^*).
\end{equation} 
In our second main result (Theorem~\ref{theo:bndricc}), we show that
if e.g.\ $A$ is sectorial with angle $\theta<\pi/2$, then the
nonnegative solution $X_+$ is bounded, 
uniquely determined
and \eqref{eq:riccati-intro} holds for all $u \in \mdef(A)$;
similarly, if $-A$ is sectorial with angle $\theta<\pi/2$, 
then $X_-$ is bounded and uniquely determined.
%, and $X_-$ is bounded if $A$ is replaced by $-A$.

The assumption $\bigcap_{t\in\R}\ker (B(A^*\!+\ri t)^{-1})\!=\!\{0\}$
is trivially satisfied if $\ker{B}\!=\!\{0\}$. A necessary condition for it is
that $\ker B$ contains no eigenvectors of~$A^*$; if $A$ has a compact
resolvent and the system of generalised eigenvectors is complete, it
is also sufficient.  If $A$ generates a $C_0$-semigroup 
and $B$ is bounded, it is equivalent to the approximate controllability of the
pair $(A,B)$.

A novel ingredient of our approach are stability theorems for $p$-subordinate perturbations of sectorially dichotomous operators.
In brief, a linear operator $R$ on a Banach space $V$ is called \emph{$p$-subordinate} to a linear operator $S$ on $V$ with $p\in[0,1]$ if $\mdef(S) \subset \mdef(R)$ and there exists $c\ge0$ with
\[
  \|Ru\|\leq c \|u\|^{1-p}\|Su\|^p, \quad u\in\mdef(S);
\]
if $p<1$, this implies that $R$ is $S$-bounded with $S$-bound $0$. 
A linear operator $S$ on  $V$ is called
\emph{dichotomous} if
the spectrum $\sigma(S)$ has a gap along the imaginary axis $\ri \R$ and
there is a decomposition $V=V_+\oplus V_-$ into
$S$-invariant subspaces $V_\pm$ such
that the restrictions $S_+=S|_{V_+}$ and $S_-=S|_{V_-}$ have their
spectrum in the right and left half-plane, respectively; 
note that, even in the Hilbert space case, orthogonality is not assumed.
If $-S_+$ and $S_-$ are generators of exponentially decaying semigroups, then
$S$ is called \emph{exponentially dichotomous},
see~\cite{bart-gohberg-kaashoek}; if these semigroups are even
analytic, then $S$ is \emph{sectorially dichotomous}, see Section
\ref{sec:dichot} below.

%In this article, we make the following assumptions:
%\begin{itemize}
%\item[(i)] $A$ is \emph{sectorially dichotomous}, i.e.\
%  $A$ is dichotomous and $A_+$ and $-A_-$ are sectorial;
%  equivalently, $-A_+$ and $A_-$ generate exponentially decaying analytic
%  semigroups. \vspace{-1mm}
%\item[(ii)] $C$ is \emph{$p$-subordinate} to $A$ and $B$ is $q$-subordinate
%  to $A^*$ with $p,q\in[0,1[\,$, i.e.\
%  $\mdef(A)\subset\mdef(C)$, $\mdef(A^*)\subset\mdef(B)$, and
%  \begin{alignat*}{2}
%    \|Cu\|&\leq c_C\|u\|^{1-p}\|Au\|^p, &\quad &u\in\mdef(A),\\
%    \|Bv\|&\leq c_B\|v\|^{1-q}\|A^*v\|^q, &&v\in\mdef(A^*);
%  \end{alignat*} \vspace{-2mm}
%\item[(iii)]
%  $\displaystyle\bigcap_{t\in\R}\ker (B(A^*+\ri t)^{-1})=\{0\}$.
%\end{itemize}
%We then prove that the Hamiltonian is dichotomous
%% by Theorem~\ref{theo:dichotham},
%and that the $T$-invariant spectral subspaces $V_+,\,V_-$
%%corresponding to the right and left half-plane
%are graphs of selfadjoint solutions $X_+$, $X_-$ of the 
%Riccati equation, where $X_+$ is nonnegative and $X_-$ is nonpositive.
%Moreover, % Theorem~\ref{theo:bndricc} implies that
%$X_+$ is bounded if
%$A$ is sectorial, and $X_-$ is bounded if $-A$ is sectorial.
%
%The present article complements these results since 
%the dichotomy property allows for situations, where typically no 
%Riesz basis of eigenvectors exists, 
%e.g.\ in the presence of continuous spectrum.
%On the other hand, $T$ is not necessarily dichotomous  in
%\cite{kuiper-zwart,wyss-rinvsubham,wyss-unbctrlham}.

The assumption that the state operator $A$ is sectorially dichotomous
implies that $A$ is \emph{bisectorial} (i.e.\ a bisector around
$\ri\R$ is contained in the resolvent set $\varrho(A)$ and
$\lambda(A-\lambda)^{-1}$ is uniformly bounded on this bisector).
Bisectorial operators play an important role in the study of maximal
regularity of evolution equations $u'+Au=f$ on $\R$, see
e.g.\ \cite{arendt-bu,arendt-duelli}.  
Exponentially dichotomous operators have a wide range of
applications, e.g.\ to Wiener-Hopf factorisation, see
\cite{bart-gohberg-kaashoek79,bart-gohberg-kaashoek,vdmee-book}.
The spectral decomposition of a
dichotomous Hamiltonian operator function may be used to show the
conditional reducibility of this operator function, see \cite{azizov-dijksma-gridneva12}.

%We give some necessary and sufficient conditions for assumptions (i) and (iii):
%%Assumption (iii) may be hard to check in practice, 
%%but there are several necessary as well as sufficient conditions:
%\begin{enumerate}
%\item If $A$ is the generator of a strongly continuous semigroup, then $A$
%  is sectorially dichotomous if and only if the semigroup generated by $A$
%  is analytic (though not necessarily bounded) and $\ri\R\subset\varrho(A)$. 
%  \vspace{-1mm}
%\item
%  \cw{Assumption (i) implies that $A$ is \emph{bial} in the 
%  sense that a bisector around $\ri\R$ is contained in $\varrho(A)$
%  and $\lambda(A-\lambda)^{-1}$ is uniformly bounded on this bisector.
%  Bisectorial operators play an important role in the study of 
%  maximal regularity of the evolution equation $u'+Au=f$ on $\R$,
%  see e.g. \cite{arendt-bu,arendt-duelli}.}
%  \vspace{-1mm}
%\item
%  Assumption (iii) is equivalent to the condition that
%  \[\mspan\set{(A-\ri t)^{-1}B^*u}{t\in \R,\,u\in\mdef(B^*)}\subset H
%    \quad\text{is dense}. \vspace{-1mm}\]
%\item
%  If $\ker{B}=\{0\}$, then (iii) holds. \vspace{-1mm}
%\item
%  A necessary condition for (iii) is that
%  $\ker B$ contains no eigenvectors of $A^*$.
%  If $A$ has a compact resolvent and the system of generalised eigenvectors
%  is complete, this is also sufficient. \vspace{-1mm}\vspace{-1mm}
%\item 
%  If $A$ generates a $C_0$-semigroup, then condition (iii)
%  is equivalent to the approximate controllability of the pair $(A,B)$.
%\end{enumerate}
%

In systems theory, e.g.\ for  systems with boundary control and observation, see \cite{wyss-unbctrlham},
the unbounded operators $B$ and $C$ need not have realisations as symmetric 
operators \emph{on} $H$ but, instead, map into an extrapolation space.
The results of this paper are a first step in this direction; the generalisation 
to Riccati equations involving extrapolation spaces is work in progress.

The article is organised as follows:
In Section~\ref{sec:dichot} we introduce sectorially dichotomous operators
and present some of their important properties.
%; the central result is
%Theorem~\ref{theo:dichot}, which was proved in \cite{langer-tretter}
%and yields
%sufficient conditions for an operator to be dichotomous.
In Section~\ref{sec:psub} we study the stability of bisectoriality and sectorial dichotomy under $p$-subordinate perturbations
and we investigate their effect on the spectrum.
In Section~\ref{sec:ham} we prove that a Hamiltonian \eqref{eq:ham-intro} with sectorially dichotomous $A$ and nonnegative $B$,~$C$ that are $p$-subordinate to $A^*$, $A$, respectively, is dichotomous. 
We employ the symmetry of $T$ with respect to two different indefinite inner
products $[\cdot|\cdot]_1$, $[\cdot|\cdot]_2$, used before in
\cite{kuiper-zwart}, \cite{langer-ran-temme}, \cite{langer-ran-rotten},
to show that the corresponding invariant subspaces $V_+$, $V_-$ are
hypermaximal neutral in $[\cdot|\cdot]_1$ and nonnegative, nonpositive,
respectively, in $[\cdot|\cdot]_2$.
In Section~\ref{sec:ricc} we exploit these properties to prove, in 
Theorem~\ref{theo:ham-ricc}, that $V_\pm$ are graphs or inverse graphs of operators $X_\pm$ and that $X_\pm$ are solutions of the Riccati equation \eqref{eq:ricc} if 
$\bigcap_{t\in\R}\ker (B(A^*+\ri t)^{-1})=\{0\}$. Moreover, we derive necessary as well as sufficient conditions for the latter assumption.
In Section~\ref{sec:bndsol} we prove, in Theorem \ref{theo:bndricc}, that
$X_+$ (or $X_-$) is bounded
and uniquely determined
provided that $A$ (or $-A$) is sectorial with
angle $\theta<\pi/2$.  Our proof exploits the continuous dependence of the
subspaces $V_\pm$, and hence of $X_\pm$, on $B$ and $C$, see Proposition
\ref{prop:contgraph}; it differs substantially from the one in
\cite{langer-ran-rotten} for bounded $B$,~$C$.
In the final Section~\ref{sec:example} we illustrate our theory by three
examples in which all entries of the Hamiltonian are partial differential
or unbounded multiplication operators; in all cases neither the results of
\cite{langer-ran-rotten,bubak-mee-ran} nor those of
\cite{kuiper-zwart,wyss-rinvsubham,wyss-unbctrlham} apply, either because $B$,
$C$ are unbounded or because the Hamiltonian does not have a Riesz basis of
generalised eigenvectors.

In this paper the following notation is used. For a closed linear
operator $T$ on a Banach space $V$ we denote the domain by $\mdef(T)$,
the kernel by $\ker (T)$, the spectrum by $\sigma(T)$, the point
spectrum by $\sigma_p(T)$, and the resolvent set by
$\varrho(T)$. Further, by $\C_+$ and $\C_-$ we denote the open right and
open left half-plane, respectively.

\section{Sectorially dichotomous operators}
\label{sec:dichot}

In this section we introduce and study sectorially dichotomous operators.
They form a subclass of exponentially dichotomous operators 
for which there exist invariant spectral subspaces corresponding to the spectral
parts in the left and the right half-plane, even if none of them is
bounded.

We begin by briefly recalling the notions of dichotomous and exponentially
dichotomous operators, see \cite{bart-gohberg-kaashoek,langer-ran-rotten}, and of sectorial and bisectorial operators, see~\cite{arendt-bu}.

\begin{definition}\label{def:dichot}
  A densely defined linear operator $S$ on a Banach space~$V$
  is called \emph{dichotomous} if there exist $h>0$ and
  complementary closed subspaces
  $V_+,V_-\subset V$, i.e.\ $V=V_+\oplus V_-$, such that
  \begin{itemize}
  \item[(i)] 
    $\bigset{z\in\C}{|\Real z|<h}\subset\varrho(S)$,
  \item[(ii)] $V_+$ and $V_-$ are $S$-invariant, i.e.\
    $S(\mdef(S)\cap V_\pm)\subset V_\pm$, and
  \item[(iii)] $\sigma(S|_{V_+})\subset\C_+$ {and}
    $\sigma(S|_{V_-})\subset \C_-$;
  \end{itemize}
  in this case, the maximal $h_0$ with (i) is called
  \emph{dichotomy gap} of $S$. A dichotomous operator is called 
  \emph{exponentially dichotomous} if
  \begin{itemize}
  \item[(iv)] 
  $-S|_{V_+}$ and $S|_{V_-}$ are generators of exponentially decaying semigroups.
  \end{itemize}
We call $V_\pm$ the spectral subspaces corresponding to the
dichotomous operator $S$; we write $S_\pm:=S|_{V_\pm}$ for the restrictions of
$S$ to $V_\pm$ and denote by $P_\pm$ the spectral projections onto $V_\pm$.
\end{definition}

Dichotomous operators admit a block diagonal matrix representation with respect to the 
decomposition $V=V_+\oplus V_-$ in the following sense:

\begin{definition}[\mbox{\cite[\S III.5.6]{kato}}]\label{def:decomp}
  Let $S$ be a linear operator on a Banach space~$V$ and $V_1,V_2\subset V$ 
  complementary closed subspaces.
  Then $S$ is said to  \emph{decompose} with respect to 
  the direct sum $V=V_1\oplus V_2$ if
  \begin{itemize}
  \item[(i)] $V_1$ and $V_2$ are $S$-invariant, and
  \item[(ii)] $\mdef(S)=(\mdef(S)\cap V_1)\oplus(\mdef(S)\cap V_2)$.
  \end{itemize}
\end{definition}

Note that even in the Hilbert space case it is not assumed that $V_1$
and $V_2$ are orthogonal, i.e.\ $V_1$ is not a reducing subspace of $S$
in the sense of \cite{akhiezer-glazman,weidmann80}.

\begin{remark}
  If $S$ decomposes with respect to $V=V_1\oplus V_2$, then $S$ admits the
  block operator matrix representation
  \[S=\pmat{S|_{V_1}&0\\0&S|_{V_2}};\]
  in particular, 
  \(\sigma(S)=\sigma(S|_{V_1})\cup\sigma(S|_{V_2})\)
  and,  for every $z\in\varrho(S)$, the subspaces
  $V_1$ and $V_2$ are also $(S-z)^{-1}$-invariant.
\end{remark}

\begin{lemma}\label{lem:dichot}
  If the linear operator $S$ is dichotomous, then it decomposes with
  respect to its spectral subspaces $V=V_+\oplus V_-$.
\end{lemma}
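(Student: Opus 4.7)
The statement of Definition~\ref{def:decomp} has two parts. Part~(i), that $V_\pm$ are $S$-invariant, is already built into Definition~\ref{def:dichot}. So the whole content of the lemma is part~(ii), that
\[
\mdef(S)=\bigl(\mdef(S)\cap V_+\bigr)\oplus\bigl(\mdef(S)\cap V_-\bigr).
\]
The directness of this sum is automatic since $V=V_+\oplus V_-$; only the inclusion $\mdef(S)\subset(\mdef(S)\cap V_+)+(\mdef(S)\cap V_-)$ needs a proof. My plan is to produce the decomposition of an arbitrary $u\in\mdef(S)$ by inverting $S$ on each spectral subspace separately and then using injectivity of $S$ to identify the sum with $u$.

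The key observation driving the argument is that Definition~\ref{def:dichot}(i) forces $0\in\varrho(S)$ (take any point in the strip $|\Real z|<h$; actually $z=0$ itself lies there), so $S\colon\mdef(S)\to V$ is a bijection. Moreover, by Definition~\ref{def:dichot}(iii), $\sigma(S_\pm)\subset\C_\pm$, so $0\in\varrho(S_\pm)$ as well, which means each restriction $S_\pm\colon\mdef(S)\cap V_\pm\to V_\pm$ is bijective with bounded inverse. This is the only place where the spectral condition is used.

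Now, given $u\in\mdef(S)$, set $v:=Su\in V$ and write $v=v_++v_-$ using the already-given topological direct decomposition $V=V_++V_-$; thus $v_\pm\in V_\pm$ are uniquely determined. Define $u_\pm:=S_\pm^{-1}v_\pm\in\mdef(S)\cap V_\pm$. Then $u':=u_++u_-\in\mdef(S)$ and
\[
Su'=Su_++Su_-=S_+u_++S_-u_-=v_++v_-=v=Su.
\]
Since $S$ is injective, $u=u_++u_-$, which exhibits the required decomposition. Directness then gives the uniqueness $u_\pm=P_\pm u$, and shows in passing that the algebraic projections $P_\pm$ from Definition~\ref{def:dichot} map $\mdef(S)$ into $\mdef(S)\cap V_\pm$.

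There is essentially no obstacle here; the entire proof hinges on the single fact that $0\in\varrho(S)\cap\varrho(S_+)\cap\varrho(S_-)$, which is read off immediately from the dichotomy hypotheses. The lemma is really a statement that any bijective operator that is invariant on each summand of a direct decomposition of the target space automatically decomposes in the sense of Definition~\ref{def:decomp}.
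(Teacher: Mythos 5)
Your proof is correct and essentially identical to the paper's: both decompose $Su$, apply $S_\pm^{-1}$ to the components using $0\in\varrho(S_\pm)$, and invoke injectivity of $S$ (from $0\in\varrho(S)$) to conclude $u=u_++u_-$. Only the notation differs.
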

\begin{proof}
  We only have to verify  property (ii) in Definition~\ref{def:decomp}.
  The inclusion ``$\supset$'' is trivial.
  Let $x\in\mdef(S)$. Then $Sx=y_++y_-$ with $y_\pm\in V_\pm$.
  Since $0\in \varrho(S_{\pm})$ by condition (iii) in
  Definition~\ref{def:dichot}, 
%  \marginpar{\cw{ $\varrho(S)=\varrho(S_+)\cap\varrho(S_-)$ not yet known;
%     only follows from Def 2.2}}
   we can set $x_\pm:=(S_{\pm})^{-1}y_\pm \in \mdef(S)\cap V_\pm$
  and obtain
  \[S(x_++x_-)=S_{+}x_++S_{-}x_-=y_++y_-=Sx.\]
  Because $0\in\varrho(S)$, this implies that $x=x_++x_-\in(\mdef(S)\cap V_+)\oplus(\mdef(S)\cap V_-)$.
\end{proof}

\begin{remark}\label{rem:dichot}
  There are two simple cases in which condition (i) in Definition \ref{def:dichot},  
  \(\set{z\in\C}{|\Real z|<h}\subset\varrho(S)\),
%  \[\bigset{z\in\C}{|\Real z|<h}\subset\varrho(S)\]
  already suffices for the dichotomy of $S$:
  \begin{enumerate}
  \item if $S$ is a normal operator on a Hilbert space;
  \item if one of $\sigma_\pm(S)=\sigma(S)\cap\C_\pm$ is bounded.
  \end{enumerate}
  In the first case,  the existence of the subspaces $V_\pm$ is a consequence
  of the spectral theorem;
  in the second case, the  Riesz projection corresponding to the bounded
  part $\sigma_-(S)$ or $\sigma_+(S)$ of $\sigma(S)$ may be used to define $V_-$ or $V_+$, 
  compare \cite[\S III.6.4]{kato}.
\end{remark}

%For a non-normal operator such that both parts $\sigma_\pm(S)$ of the spectrum are unbounded,
The following result is essential in characterising dichotomous
operators possessing the additional property that
the spectral projections are given by a resolvent integral along the imaginary axis;
%%  there exist spectral invariant
%% subspaces corresponding to the spectrum in the left and right half
%% plane; 
its proof is based on an earlier deep result by Bart, Gohberg,
and Kaashoek, see \cite[Theorem~3.1]{bart-gohberg-kaashoek} and also
\cite[Theorem~XV.3.1]{gohberg-goldberg-kaashoek}.

\begin{theo}[\mbox{\cite[Theorem~1.1]{langer-tretter}}]
  \label{theo:dichot}
  Let $S$ be a closed densely defined linear operator on a Banach space~$V$ and
  $h>0$ such that
  \begin{itemize}
  \item[{\rm (i)}] $\set{z\in\C}{|\Real z|\leq h}\subset\varrho(S)$ and
    $\,\sup_{|\Real z|\leq h}\|(S-z)^{-1}\|<\infty$;
  \item[{\rm (ii)}] 
    $\lim_{|s|\to\infty}\sup_{r\in[0,h]}\|(S-r-\ri s)^{-1}\|=0$; \vspace{-1mm}
  \item[{\rm (iii)}] the Cauchy principal value at infinity
    $\displaystyle\int_{\ri\R}^\prime(S-z)^{-1}x\,dz$ exists for all $x\in V$.
  \end{itemize}
  Then $S$ is dichotomous and
  the corresponding projections $P_+$, $P_-$ satisfy
  \[\frac{1}{\pi\ri}\int_{\ri\R}^\prime(S-z)^{-1}x\,dz=P_+x-P_-x,
    \quad% \text{for all}\quad 
    x\in V.\]
\end{theo}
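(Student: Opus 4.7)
My strategy is to construct the projections $P_\pm$ directly from the Cauchy principal value in (iii), verify the algebraic and topological properties that make them complementary bounded projections commuting with $S$, and then reduce the spectral localisation of the restrictions $S|_{V_\pm}$ to the deep result~\cite[Theorem~3.1]{bart-gohberg-kaashoek} to which the theorem is attributed.

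I would begin by setting
\[
  Jx := \frac{1}{\pi\ri}\int_{\ri\R}^\prime(S-z)^{-1}x\,dz, \qquad x\in V,
\]
which is well defined by (iii). The truncations $J_Rx:=\frac{1}{\pi\ri}\int_{-\ri R}^{\ri R}(S-z)^{-1}x\,dz$ are bounded by (i) on the compact interval of integration, and converge pointwise to $Jx$ for every $x$ by (iii), so $J$ is bounded by Banach--Steinhaus. Because $(S-z)^{-1}$ and $(S-\lambda)^{-1}$ commute for any $z,\lambda\in\varrho(S)$, pushing the bounded operator $(S-\lambda)^{-1}$ through the principal value gives $J(S-\lambda)^{-1}=(S-\lambda)^{-1}J$; projections formed from $J$ will therefore commute with every resolvent of $S$, making their ranges $S$-invariant automatically.

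The central step is the algebraic identity $J^2=I$. By analyticity of the resolvent on the strip $\{|\Real z|\le h\}$, the decay in (ii), and Cauchy's theorem, the defining contour for $J$ may be shifted to any vertical line $\Real z=r$ with $|r|\le h$. Choosing two disjoint shifts, one obtains
\[
  J^2 x = \frac{1}{(\pi\ri)^2}\int_{\Real z=-\delta}\int_{\Real w=+\delta}(S-z)^{-1}(S-w)^{-1}x\,dw\,dz.
\]
Applying the resolvent identity $(S-z)^{-1}(S-w)^{-1}=(w-z)^{-1}[(S-w)^{-1}-(S-z)^{-1}]$ and carefully regularising (truncate to $|\Imag z|,|\Imag w|\le R$, apply Fubini on each bounded rectangle, and pass to $R\to\infty$ using the uniform decay in (ii)) reduces the computation to the scalar integrals $\int_{\Real z=-\delta}(w-z)^{-1}\,dz$ and $\int_{\Real w=+\delta}(w-z)^{-1}\,dw$, both of which equal $\ri\pi$ by direct evaluation; after collecting constants one arrives at $J^2 x=x$. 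Setting $P_\pm:=\tfrac{1}{2}(I\pm J)$ then gives two bounded complementary projections with $P_++P_-=I$ and $P_+-P_-=J$, and the closed subspaces $V_\pm:=\range(P_\pm)$ satisfy $V=V_+\oplus V_-$; being invariant under every resolvent $(S-\lambda)^{-1}$ in the strip, they are $S$-invariant, and $S$ decomposes in the sense of Definition~\ref{def:decomp}.

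It remains to prove $\sigma(S|_{V_+})\subset\C_+$ and $\sigma(S|_{V_-})\subset\C_-$, after which $S$ is dichotomous and the integral formula of the theorem is precisely the identity $P_+-P_-=J$ already at hand. Here I would invoke~\cite[Theorem~3.1]{bart-gohberg-kaashoek}: the hypotheses (i)--(iii) of the present theorem are exactly those under which that result yields $-S|_{V_+}$ and $S|_{V_-}$ as generators of exponentially decaying strongly continuous semigroups on $V_\pm$, which in particular confines the spectra to the correct open half-planes. The main obstacle I expect is the PV--Fubini interchange in the proof of $J^2=I$: the individual split integrals are only conditionally convergent, and their iterated values depend in general on the order of integration, so the truncation argument must be executed with care, relying crucially on (ii) to make the contributions of the horizontal edges of the rectangles vanish. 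A cleaner backup, should the direct manipulation prove unwieldy, is to check $J^2=I$ first on the dense range of $(S-\mu)^{-1}$ for some $\mu\in\varrho(S)\cap\C_+$: the identity $(S-z)^{-1}(S-\mu)^{-1}=(\mu-z)^{-1}[(S-z)^{-1}-(S-\mu)^{-1}]$ collapses $J^2(S-\mu)^{-1}y$ to an absolutely convergent single integral plus an explicit scalar principal value, both directly computable.
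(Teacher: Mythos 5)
Since the paper merely cites this theorem from Langer--Tretter (noting that their proof rests on a ``deep result'' of Bart, Gohberg and Kaashoek), there is no in-paper proof to compare against, so I assess your argument on its own terms. The skeleton you propose --- define $J$ by the principal value, establish boundedness via Banach--Steinhaus, commutation with the resolvent, then $J^2=I$ and $P_\pm:=\tfrac12(I\pm J)$ --- is the natural shape of such a proof, and the first two steps are fine. The gap is the claim $J^2=I$. As written, your manipulation does not give $I$; it gives $0$. Test it on $S=1$ on $\C$, where trivially $J=1$. After shifting to $\Real z=-\delta$, $\Real w=\delta$ and splitting with the resolvent identity, your two scalar integrals are indeed both $\ri\pi$; but substituting them into the split terms yields $\frac{\ri\pi}{(\pi\ri)^2}\int'(S-w)^{-1}x\,dw = Jx$ for the first and $-\frac{\ri\pi}{(\pi\ri)^2}\int'(S-z)^{-1}x\,dz = -Jx$ for the second, whose sum is $0$, not $x$. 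The source of the error is the Fubini interchange you implicitly perform in one of the two split pieces: computing the piece containing $(S-w)^{-1}x$ in the correct iterated order (inner $w$, then $z$) gives, after partial fractions, the inner value $\frac{2\pi\ri}{1-z}$ and thus the contribution $2$, not $1$; the two orders genuinely disagree because the conditionally convergent principal values are not Fubini-compatible, and on the truncations the inner integral $\int_{|\Imag w|\le R}\frac{dw}{z-w}$ carries a log-divergent boundary term for $|\Imag z|$ near $R$. Your backup via the range of $(S-\mu)^{-1}$ does not close the gap either: with $Ky:=\frac{1}{\pi\ri}\int'\frac{(S-w)^{-1}y}{w-\mu}\,dw$ one finds $J(S-\mu)^{-1}=K+(S-\mu)^{-1}$, hence $J^2(S-\mu)^{-1}y=(S-\mu)^{-1}y+K(I+J)y$ and $K(I+J)=(J-I)(S-\mu)^{-1}(I+J)=(J^2-I)(S-\mu)^{-1}$, so the step that remains is exactly $J^2=I$ again; moreover under hypothesis (ii) the integral defining $K$ is still only conditionally convergent, since $\|(S-w)^{-1}y\|/|w-\mu|$ is $o(1/|w|)$ but not necessarily integrable.

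There is also a structural mismatch in your use of Bart--Gohberg--Kaashoek. You want to feed the already-constructed decomposition $V=V_+\oplus V_-$ into their Theorem~3.1 to conclude $\sigma(S|_{V_\pm})\subset\C_\pm$. But that theorem is precisely the machinery that \emph{produces} the dichotomous decomposition (and the bisemigroup) from strip-resolvent and decay hypotheses; it is not a statement that takes a decomposition compatible with the resolvent and localises the restricted spectra. Either BGK already delivers the full conclusion of Theorem~\ref{theo:dichot} directly (making your construction of $J$ redundant scaffolding), or it does not apply to the intermediate objects you hand it. The route the paper indicates for the Langer--Tretter proof is to verify the hypotheses of BGK and let that theorem do the heavy lifting; establishing $J^2=I$ by hand under hypotheses as weak as (i)--(iii) essentially reconstructs that machinery, and cannot be done by the formal contour-shift calculation you sketch.
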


\begin{remark}
A standard Neumann series argument shows that  assumptions
(i) and (ii) in Theorem~\ref{theo:dichot} are satisfied if
\[
\ri\R\subset\varrho(S) \ \mbox{  and } \
\lim_{|t|\to\infty}\|(S-\ri t)^{-1}\|=0.
\] %
\end{remark}

To obtain a sufficient condition for assumption (iii), 
%in Theorem~\ref{theo:dichot}, 
we now introduce sectorially dichotomous
operators, which form a subclass of exponentially dichotomous
operators.
%To this end, 
First we need the notion of sectorial and bisectorial
operators, see e.g.~\cite{arendt-bu}.

\begin{definition}\label{def:sect}
  Let $S$ be a densely defined linear operator on a Banach space.
  \begin{itemize}
  \item[{\rm (i)}] $S$ is called 
    \emph{sectorial with angle}\footnote{
          Throughout the article we use the conventions $-\pi<\arg z\leq\pi$
          and $\arg 0=0$ for the argument of a complex number.}
    $\theta\in[0,\pi[\,$ \emph{and radius} $r\geq0$ if\,
    \begin{equation}
    \label{sector}
    \sigma(S)\subset\Sigma_\theta\cup\overline{B_r(0)}
    \qquad\text{where}\qquad \Sigma_\theta:=\bigset{z\in\C}{|\arg
    z|\leq\theta}
    \end{equation} 
    and for every $\theta'\in\,]\theta,\pi]\,$ there
    exists $M>0$ such that
    \begin{equation}\label{eq:sect-resolv}
      \|(S-z)^{-1}\|\leq\frac{M}{|z|}, \qquad %\text{for}\quad 
      |\arg z|\geq\theta',\; |z|>r;
    \end{equation}
    $S$ is called \emph{sectorial with angle} $\theta\in[0,\pi[\,$,
    or simply \emph{sectorial}, if $r=0$.
    %it is sectorial with angle $\theta$ and radius $r=0$.
  \item[{\rm (ii)}]
    $S$ is called \emph{bisectorial with angle} $\theta\in[0,\pi/2[\,$
    \emph{and radius} $r\geq0$ if
    \[\sigma(S)\subset\Sigma_\theta\cup(-\Sigma_\theta)\cup\overline{B_r(0)}\]
    and for every $\theta'\in\,]\theta,\pi/2]$ 
    there exists $M>0$ such that
    \begin{equation}\label{eq:bisectest}
      \|(S-z)^{-1}\|\leq\frac{M}{|z|}, \qquad %\text{for}\quad 
      \theta'\leq|\arg z|\leq\pi-\theta',\;|z|>r;
    \end{equation}
    $S$ is called \emph{bisectorial with angle} $\theta\in[0,\pi/2[\,$,
    or simply \emph{bisectorial}, if $r=0$.
    %it is  bisectorial with angle $\theta$ and radius $r=0$.
  \end{itemize}
\end{definition}

The bisector on which the resolvent estimate \eqref{eq:bisectest} holds is denoted by,
see Fig.~\ref{fig:psubpert},
\begin{equation}\label{eq:bisect}
  \Omega_{\theta',r}:=\set{z\in\C}
	{\theta'\leq|\arg z|\leq\pi-\theta',\;|z|>r}.
\end{equation}

\begin{remark}\label{rem:sect}
  \begin{itemize}
  \item[(i)]
    In semigroup theory, often $-S$ instead of $S$ is
    called sectorial.
  \item[(ii)]
    $S$ is sectorial  with angle $\theta<\pi/2$
    if and only if $-S$ is the generator of a bounded analytic semigroup,
    see e.g.\ \cite[Theorem~II.4.6]{engel-nagel}.
 \item[(iii)]
    If $V$ is a Hilbert space with scalar product $(\cdot|\cdot)$ and
    \[
      W(S):=\bigset{(Sx|x)}{x\in\mdef(S),\,\|x\|=1} 
    \]
    is the \emph{numerical range} of $S$, then $S$
    is sectorial with angle $\theta\leq\pi/2$ 
    if %and only if
    \[W(S)\subset \Sigma_\theta \quad\text{and}\quad
      \varrho(S)\setminus W(S)\neq\varnothing;\]
    in this case, for every $\theta'\in\,]\theta,\theta+\pi/2]$
    the estimate \eqref{eq:sect-resolv} holds with
    $M=(\sin(\theta'-\theta))^{-1}$ and $r=0$,
    compare  \cite[Theorem~V.3.2 and \S V.3.10]{kato}.
%
%    \cw{is sectorial with angle $\theta\leq\pi/2$
%    such that for every $\theta'\in\,]\theta,\theta+\pi/2]$
%    the estimate \eqref{eq:sect-resolv} holds with
%    $M=(\sin(\theta'-\theta))^{-1}$ and $r=0$ if and only if
%    \[W(S)\subset \Sigma_\theta \quad\text{and}\quad
%      \varrho(S)\setminus W(S)\neq\varnothing,\]
%    compare  \cite[Theorem~V.3.2 and \S V.3.10]{kato}.}
  \item[(iv)]
    If $S$ satisfies 
    \eqref{eq:sect-resolv}
    for some $\theta'\in\,]0,\pi]$, $r\geq0$ and $M>0$, 
    then there exists   $\theta\in[0,\theta'[$
    such that $S$ is sectorial with  angle $\theta$ and radius $r$;
    this follows from  a standard Neumann series argument.
    Similarly, if $S$ satisfies \eqref{eq:bisectest} for some
    $\theta'\in\,]0,\pi/2]$, $r\geq0$ and $M>0$, 
      then there exists $\theta\in[0,\theta'[\,$
    such that $S$ is bisectorial with angle $\theta$ and radius $r$.
  \item[(v)] 
    If $S$ is the direct sum of two operators $S_+$ and $S_-$
    where $S_+$ and $-S_-$ are sectorial with angle $\theta<\pi/2$ and
    radius $r\geq0$, then $S$ is bisectorial with angle $\theta$ and
    radius $r$, see the proof of Lemma~\ref{lem:sectdichot}~(ii)
    below.
  \end{itemize}
\end{remark}

\begin{definition}
\label{def:sectdichot}
  A dichotomous operator $S$ on a Banach space is called
  \emph{sectorially dichotomous with angle} $\theta\in[0,\pi/2[\,$
  if  $S_+$ and $-S_-$ are sectorial with angle $\theta$.
\end{definition}

\begin{remark}
  \begin{itemize}
  \item[(i)] The operator $S$ is sectorially dichotomous if and only
    if it is exponentially dichotomous and the exponentially decaying
    semigroups generated by $-S_+$ and $S_-$ are analytic.
%%   \item[(ii)]
%%     $S$ is sectorially dichotomous and generates a strongly
%%     continuous semigroup if and only if
%%     $S$ generates a $($not necessarily bounded$)$ analytic
%%     semigroup and $\ri\R\subset\varrho(S)$.
  \item[(ii)]
    A simple example for an operator that is exponentially, % dichotomous
    but not sectorially dichotomous, is a normal operator with discrete
    spectrum and eigenvalues
    $1+\ri k$ and $-1+\ri k$, $k\in\N$.
  \end{itemize}
\end{remark}

The next lemma shows that sectorially dichotomous operators are
bisectorial (compare Figure~\ref{fig:sectdichot}) and satisfy
condition~(iii) in Theorem~\ref{theo:dichot}.  

\begin{lemma}\label{lem:sectdichot}
  Let $S$ be sectorially dichotomous with angle $\theta \in [0,\pi/2[$
  and dicho\-tomy gap $h_0>0$. Then
  \begin{itemize}
  \item[{\rm (i)}]
  $
  \sigma(S)\subset\bigset{z\in\Sigma_\theta\cup(-\Sigma_\theta)}
	{|\Real z|\geq h_0}
  $;
  \item[{\rm (ii)}]
    $S$ is bisectorial with angle $\theta$;
  \item[{\rm (iii)}]
  the spectral projections $P_+$, $P_-$  corresponding to $S$ satisfy
  \vspace{-1mm}
  \begin{equation}\label{eq:sectdichot-int}
    \frac{1}{\pi \ri}\int_{\ri\R}^\prime(S-z)^{-1}x\,dz=P_+x-P_-x,
    %\quad\text{for all}
    \quad x\in V.
    \vspace{-1mm}
  \end{equation}
  \end{itemize}
\end{lemma}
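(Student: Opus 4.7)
The plan is to treat the three claims separately, exploiting throughout the block decomposition $S=S_+\oplus S_-$ afforded by Lemma~\ref{lem:dichot} and the resulting resolvent formula $(S-z)^{-1}=(S_+-z)^{-1}P_++(S_--z)^{-1}P_-$ valid on $\varrho(S_+)\cap\varrho(S_-)$.

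For (i) I observe that sectoriality of $S_+$ and $-S_-$ with angle $\theta$ and radius $0$ gives $\sigma(S_+)\subset\Sigma_\theta$ and $\sigma(S_-)\subset-\Sigma_\theta$, while the dichotomy gap $h_0$ places the strip $|\Real z|<h_0$ inside $\varrho(S_\pm)$; since $\sigma(S)=\sigma(S_+)\cup\sigma(S_-)$, (i) follows.

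For (ii) I fix $\theta'\in\,]\theta,\pi/2]$. Sectoriality of $S_+$ yields $\|(S_+-z)^{-1}\|\le M/|z|$ for $|\arg z|\ge\theta'$, while sectoriality of $-S_-$ (after replacing $z$ by $-z$) gives the same bound for $(S_--z)^{-1}$ on $|\arg z|\le\pi-\theta'$. Both bounds apply simultaneously on the bisector $\Omega_{\theta',0}$, so the block-diagonal formula above yields $\|(S-z)^{-1}\|\le M(\|P_+\|+\|P_-\|)/|z|$ there. Remark~\ref{rem:sect}(iv) then gives bisectoriality with some angle $\le\theta$, and passing to the infimum over $\theta'>\theta$ shows the angle is exactly $\theta$.

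For (iii) I would verify the hypotheses of Theorem~\ref{theo:dichot} and invoke it. Conditions (i) and (ii) of that theorem are immediate from the bisectorial estimate in (ii) above together with the dichotomy gap $h_0$; the nontrivial step is the existence of the Cauchy principal value in condition (iii). To that end I split $x=x_++x_-$ with $x_\pm\in V_\pm$ and handle each summand via the semigroup representation. Since $-S_+$ generates the analytic, exponentially decaying semigroup $\{T_+(s)\}_{s\ge 0}$ (by Definition~\ref{def:sectdichot} combined with the dichotomy gap), one has the Laplace representation $(S_+-\ri t)^{-1}x_+=\int_0^\infty e^{\ri ts}T_+(s)x_+\,ds$; Fubini then gives
\[
\frac{1}{\pi\ri}\int_{-\ri R}^{\ri R}(S_+-z)^{-1}x_+\,dz=\frac{2}{\pi}\int_0^\infty\frac{\sin(Rs)}{s}\,T_+(s)x_+\,ds,
\]
which tends to $x_+$ as $R\to\infty$ by a Dirichlet-kernel argument together with strong continuity of $T_+$ at $s=0^+$. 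The analogous calculation for $x_-$ via the semigroup generated by $S_-$ yields $-x_-$, and summing produces the desired identity $P_+x-P_-x$.

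The main obstacle will be (iii): one must justify both the Fubini interchange (relying on the exponential decay of $T_+$ to secure absolute integrability in $s$) and the passage to the limit $R\to\infty$ for the vector-valued Dirichlet integral. Once these technicalities are handled, the identification with $P_+x-P_-x$ is automatic via Theorem~\ref{theo:dichot}.
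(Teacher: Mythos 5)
Your proofs of (i) and (ii) are essentially identical to the paper's and are correct; the only superfluity in (ii) is the detour through Remark~\ref{rem:sect}\,(iv), which is unnecessary since establishing the estimate \eqref{eq:bisectest} for every $\theta'\in\,]\theta,\pi/2]$ with $r=0$ is \emph{already} the definition of bisectoriality with angle $\theta$.

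For (iii), you diverge from the paper. The paper reduces everything to \cite[Lemma~6.1]{langer-ran-rotten}, which asserts $\int'_{\ri\R}(\pm S_\pm - z)^{-1}x\,dz = \ri\pi x$ for $x\in V_\pm$, and then simply sums the two contributions via the block decomposition; it does not route through Theorem~\ref{theo:dichot} at all, since doing so would in principle require a separate argument that the projections produced by Theorem~\ref{theo:dichot} coincide with the given ones. You instead reprove the cited identity directly via the Laplace representation of the resolvent and a vector-valued Dirichlet integral. Your Fubini step and the reduction to $\frac{2}{\pi}\int_0^\infty\frac{\sin(Rs)}{s}\,T_+(s)x_+\,ds$ are fine (the exponential decay of $T_+$ secures absolute convergence of the double integral at each fixed $R$). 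The genuine gap is in the final limit. Strong continuity of $T_+$ at $s=0^+$ is \emph{not} sufficient for the Dirichlet kernel to converge pointwise: continuity alone does not give the Dini condition, i.e.\ $(T_+(s)x_+-x_+)/s$ need not be integrable near $0$ for an arbitrary $x_+\in V_+$. To close the gap you need the \emph{analyticity} of the semigroup in an essential way. One correct route: for $x_+\in\mdef(S_+)$ one has $\|T_+(s)x_+-x_+\|\le Cs$, so Dini holds and the limit equals $x_+$; then show the family $Q_R:=\frac{2}{\pi}\int_0^\infty\frac{\sin(Rs)}{s}\,T_+(s)\,ds$ is uniformly bounded in $R$ — this is where analyticity enters, via $\|T_+'(s)\|=\|S_+T_+(s)\|\le C/s$, which allows integration by parts over $[1/R,\infty)$ to control the oscillatory part — and conclude for all $x_+\in V_+$ by density. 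Without this (or an equivalent) argument, the step ``tends to $x_+$ by a Dirichlet-kernel argument'' does not follow, and it is precisely the point where the hypothesis $\theta<\pi/2$ (analyticity) is indispensable rather than a mere technicality.
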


\begin{figure}[h]
  \begin{center}
    \includegraphics{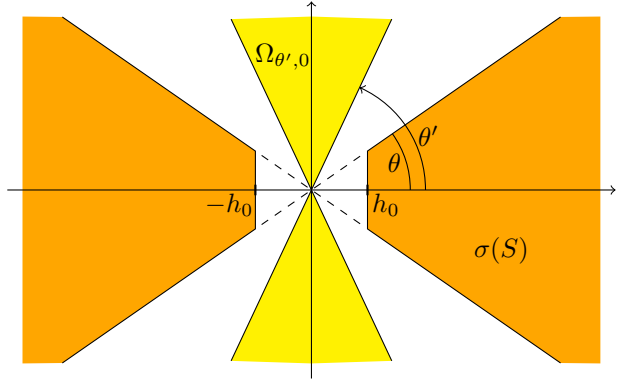} 
 \end{center}
  \caption{Situation in the proof of Lemma~\ref{lem:sectdichot}\,(ii) for
    sectorially dichotomous~$S$.}
  \label{fig:sectdichot}
\end{figure}

\begin{proof}
  (i) The claim is immediate from Definitions \ref{def:dichot}
  and~\ref{def:sect} because $\sigma(S)=\sigma(S_+)\cup\sigma(S_-)$.
  
  (ii)
  Let $\theta'\in\,]\theta,\pi/2]$.
  Since $S_+$ and $-S_-$ are sectorial with 
  angle $\theta$, there exist $M_\pm>0$ such that for
  $|\arg z|\geq\theta'$ we have
  $\|(\pm S_\pm-z)^{-1}\|\leq M_\pm/|z|$.
  For $z\in\Omega_{\theta',0}$
  we thus obtain, with \vspace{-1mm} $M:=M_+\|P_+\|+M_-\|P_-\|$,
  \[
    \|(S-z)^{-1}\|\leq\|(S_+-z)^{-1}P_+\|+\|(S_--z)^{-1}P_-\|
    \leq\frac{M}{|z|}.
  \vspace{-1mm}
  \]
  
  (iii)
  Since $S_+$ and $-S_-$ are sectorial with angle  $\theta<\pi/2$ and $0\in\varrho(S_\pm)$,
  \cite[Lemma~6.1]{langer-ran-rotten} implies that
  \[
    \int_{\ri\R}^\prime(\pm S_\pm-z)^{-1}x\,dz=\ri\pi x, \quad    x\in V_\pm.
  \vspace{-3mm}
  \]
  Consequently,
  \[\int_{\ri\R}^\prime(S-z)^{-1}x\,dz
    =\int_{\ri\R}^\prime(S_+-z)^{-1}P_+x\,dz
    +\int_{\ri\R}^\prime(S_--z)^{-1}P_-x\,dz
    =\ri\pi P_+x-\ri\pi P_-x \]
  for all $x\in V$.
\end{proof}

\begin{remark}
  Not every bisectorial operator with $0\in\varrho(S)$
  is sectorially dichotomous,
  see \cite[Theorem~3]{mcintosh-yagi} for a counter-example;
  note that hence the second implication of
  \cite[Proposition~1.8]{vdmee-book} does not hold.
  The question whether a bisectorial \emph{and} dichotomous operator $S$ is
  sectorially dichotomous will be considered in a forthcoming paper;
  while we know that the restrictions $S_+$ and $-S_-$ have their
  spectrum in a sector $\Sigma_\theta$ and satisfy resolvent estimates
  on $\Omega_{\theta,0}$, it is not clear that these estimates also
  hold on the left half-plane, 
  as required for sectoriality.
%  i.e.\ that $S_+$ and $-S_-$ are sectorial.
\end{remark}

In Section \ref{sec:ham} below we will consider Hamiltonians whose state operator $A$ is sectorially
dichotomous; in systems theory $A$ is usually even assumed to generate a strongly continuous semigroup. 
The following lemma characterises this situation.

\begin{lemma}\label{lem:analytsemigrp}
  For a linear operator $S$ in a Banach space the following are equivalent:
  \begin{itemize}
  \item[{\rm (i)}] $S$ is sectorially dichotomous and generates a strongly
    continuous semigroup;
  \item[{\rm (ii)}] $S$ is sectorially dichotomous with bounded $S_+$;
  \item[{\rm (iii)}] $S$ generates a $($not necessarily bounded$\,)$ analytic
    semigroup and $\ri\R\subset\varrho(S)$.
  \end{itemize}
\end{lemma}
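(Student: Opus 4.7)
The plan is to prove the equivalences by routing through condition~(ii), which gives the most concrete block structure $S = S_+ \oplus S_-$ with bounded $S_+$ and sectorial $-S_-$. I will establish (ii)~$\Leftrightarrow$~(i) and (ii)~$\Leftrightarrow$~(iii).

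First I would verify the easy implications (ii)~$\Rightarrow$~(i) and (ii)~$\Rightarrow$~(iii). By Lemma~\ref{lem:dichot}, (ii) gives the decomposition $S = S_+ \oplus S_-$ on $V = V_+ \oplus V_-$. Since $S_+$ is bounded, it generates the entire analytic semigroup $\re^{tS_+}$ on $V_+$, and by Remark~\ref{rem:sect}(ii), $S_-$ generates a bounded analytic semigroup on $V_-$ (as $-S_-$ is sectorial with angle $<\pi/2$). Their block-diagonal sum is an analytic (in particular strongly continuous) semigroup on $V$ generated by $S$, which proves~(i). The condition $\ri\R\subset\varrho(S)$ in~(iii) is immediate from the dichotomy gap $h_0>0$.

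For (iii)~$\Rightarrow$~(ii), Remark~\ref{rem:sect}(ii) applied to $S-\omega$ shows that $\omega - S$ is sectorial with some angle $\theta<\pi/2$ for an appropriate $\omega\in\R$, whence $\sigma(S)\subset\omega-\Sigma_\theta$. Since $\theta<\pi/2$, the set $(\omega-\Sigma_\theta)\cap\overline{\C_+}$ is bounded, and together with $\ri\R\subset\varrho(S)$ this forces a strip in $\varrho(S)$ separating $\sigma(S)$ into a bounded right part $\sigma_+$ and a left part $\sigma_-$. The Riesz projection
\[
  P_+ = \frac{1}{2\pi\ri}\oint_\Gamma (S-z)^{-1}\,dz
\]
along a contour $\Gamma\subset\C_+\cap\varrho(S)$ enclosing $\sigma_+$ then produces complementary $S$-invariant closed subspaces $V_\pm:=\range P_\pm$ with $P_-=I-P_+$, and the restriction $S_+=\frac{1}{2\pi\ri}\oint_\Gamma z(S-z)^{-1}\,dz$ is bounded. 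To see $-S_-$ sectorial with some angle $<\pi/2$, I would inherit the sectorial estimate of $\omega-S$ on $V_-$, giving $\|(S_--w)^{-1}\|\le M/|\omega-w|$ off a translated sector, and use $0\in\varrho(-S_-)$ together with compactness to absorb the shift and obtain the $O(1/|z|)$ bound on a true bisector.

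Finally, (i)~$\Rightarrow$~(ii), which I expect to be the main obstacle. The $C_0$-semigroup $\re^{tS}$ commutes with $P_\pm$ and restricts to a $C_0$-semigroup $U_+(t):=\re^{tS}|_{V_+}$ generated by $S_+$ on $V_+$. Since $S_+$ is sectorial with angle $<\pi/2$, Remark~\ref{rem:sect}(ii) gives that $-S_+$ generates a bounded analytic semigroup $T_+(t)$ on $V_+$, satisfying the standard smoothing estimate $\|S_+T_+(t)\|\le M/t$ for $t>0$. The semigroups $U_+$ and $T_+$ commute (both arise from the resolvent family of $S_+$), and differentiating $t\mapsto T_+(t)U_+(t)x$ on $\mdef(S_+)$ shows it is constant equal to $I$; by density, $T_+(t)U_+(t)=I$ on all of $V_+$. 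Hence for fixed $t_0>0$ and $x\in\mdef(S_+)$,
\[
  S_+x = S_+\bigl(T_+(t_0)U_+(t_0)\bigr)x = \bigl(S_+T_+(t_0)\bigr)U_+(t_0)x,
\]
where the right-hand side is a bounded operator in $x\in V_+$. Since $S_+$ is closed with dense domain, this equality extends to all of $V_+$, so $\mdef(S_+)=V_+$ and $S_+$ is bounded, yielding~(ii). The delicate points are producing the identity $T_+(t)U_+(t)=I$ that links the analytic semigroup $T_+$ with the merely $C_0$-semigroup $U_+$, and then promoting the equality on $\mdef(S_+)$ to boundedness of $S_+$ via closedness; everything else reduces to standard semigroup and spectral-projection arguments.
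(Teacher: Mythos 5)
Your proposal is correct, and the scheme of implications you prove ((ii)$\Leftrightarrow$(i), (ii)$\Leftrightarrow$(iii)) is a valid way to close the equivalences. The implication (iii)$\Rightarrow$(ii) is handled in essentially the same way as in the paper: use the analytic-semigroup resolvent estimate together with $\ri\R\subset\varrho(S)$ to see that $\sigma_+(S)$ is bounded, invoke Remark~\ref{rem:dichot} to get the Riesz projection and bounded $S_+$, and read off sectoriality of $-S_-$ from the same estimate. Where you genuinely depart from the paper is in how you exit from condition~(i): the paper proves (i)$\Rightarrow$(iii) by a purely resolvent-estimate argument, combining the $C_0$-semigroup bound $\|(S-z)^{-1}\|\le M/(\Real z-\omega)$ on a right half-plane with the bisectorial bound from Lemma~\ref{lem:sectdichot} to obtain an estimate of type~\eqref{eq:anasemcond} on a sector of opening larger than $\pi$, which immediately gives analyticity; you instead prove (i)$\Rightarrow$(ii) directly by semigroup theory, showing that the forward $C_0$-semigroup $U_+$ generated by $S_+$ and the backward bounded analytic semigroup $T_+$ generated by $-S_+$ are mutual inverses, and then using the smoothing estimate $\|S_+T_+(t_0)\|<\infty$ together with $S_+=\bigl(S_+T_+(t_0)\bigr)U_+(t_0)$ and closedness to force $\mdef(S_+)=V_+$. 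Both routes are sound; the paper's is more elementary (it stays at the level of resolvent estimates and needs no group-inverse identity or smoothing), while yours is a clean application of the classical fact that a sectorial operator of angle $<\pi/2$ that generates a $C_0$-group must be bounded, and makes the mechanism behind boundedness of $S_+$ more transparent. One minor point: your sentence \emph{``differentiating $t\mapsto T_+(t)U_+(t)x$ on $\mdef(S_+)$ shows it is constant equal to $I$''} should read \emph{``constant equal to $x$''}; the identity $T_+(t)U_+(t)=I$ on $\mdef(S_+)$ then follows, and extends to $V_+$ by density and boundedness of both factors.
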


\begin{proof}
  (i)$\Rightarrow$(iii):
  Since $S$ generates a strongly continuous semigroup, there exist
  $M>0$ and $\omega\in\R$ such that
  \[\|(S-z)^{-1}\|\leq\frac{M}{\Real z-\omega},\qquad \Real z>\omega.\]
  Together with \eqref{eq:bisectest}, 
  it is not difficult to conclude that there exist
  $M'>0$, $\omega'>\omega$ and $\varphi>\pi/2$ such that
  \begin{equation}\label{eq:anasemcond}
    \|(S-z)^{-1}\|\leq\frac{M'}{|z-\omega'|}\quad\text{for}\quad
    |\arg(z-\omega')|\leq\varphi.
  \end{equation}
  Hence $S$ generates an analytic semigroup.

  (iii)$\Rightarrow$(ii):
  Since $S$ generates an analytic semigroup,
  it satisfies an estimate~\eqref{eq:anasemcond}.
  Together with the assumption
  $\ri\R\subset\varrho(S)$ this implies
  that
  the part $\sigma_+(S)$ of the spectrum in the right half-plane
  is bounded and hence $S$ is dichotomous with bounded $S_+$, see 
  Remark~\ref{rem:dichot}; in particular, $S_+$ is sectorial with angle
  less than $\pi/2$.
  By \eqref{eq:anasemcond}, 
  also $-S_-$ is sectorial with angle less than $\pi/2$ and thus
  $S$ is sectorially dichotomous.

  (ii)$\Rightarrow$(i): Since $S_+$ is bounded,  it generates
  a strongly continuous semigroup. Due to the sectorial dichotomy, the same
  is true for $S_-$ and hence also for~$S$.
\end{proof}

Next we show that the adjoint $S^*$ of a
sectorially dichotomous operator $S$ on a Hilbert space $H$ is again
sectorially dichotomous.
The difficulty here is that the spectral decomposition $H=H_+\oplus H_-$
of $S$ is not necessarily orthogonal; for the simpler
orthogonal case, compare \cite[Exercise~5.39]{weidmann80}.
% and \cite[Satz~2.60]{weidmann1}.

\begin{lemma}
  Let $S$ be a closed densely defined linear operator on a Hilbert space $H$ that
  decomposes with respect to a $($not necessarily orthogonal$\,)$
  direct sum $H=H_1\oplus H_2$.
  Then $S^*$ decomposes with respect to $H=H_2^\perp\oplus H_1^\perp$,
  and we have\footnote{
    We denote the complex conjugate of a set $G\subset\C$ by 
    $G^*=\set{\bar{z}\in\C}{z\in G}$.}
  \begin{align*}
    \sigma(S^*|_{H_2^\perp})=\sigma(S|_{H_1})^*, \quad
    \sigma(S^*|_{H_1^\perp})=\sigma(S|_{H_2})^*;
  \end{align*}
  moreover, if $P_{1/2}$ are the projections onto $H_{1/2}$ associated with
  $H=H_1\oplus H_2$, then
   \begin{align*}  
    \|(S^*|_{H_{2}^\perp}-\bar{z})^{-1}\| &\leq \|P_1\|\,\|(S|_{H_{1}}-z)^{-1}\|, 
    \\
    \|(S^*|_{H_{1}^\perp}-\bar{z})^{-1}\| &\leq \|P_2\|\,\|(S|_{H_{2}}-z)^{-1}\|.
   \end{align*} 
\end{lemma}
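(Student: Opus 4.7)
The plan is to lift the decomposition of $S$ to a decomposition of $S^*$ via the dual projections, and then to identify the resolvent of $S^*|_{H_2^\perp}$ through a bounded similarity with the $H_1$-adjoint of $S|_{H_1}$.

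Since $S$ decomposes with respect to $H = H_1 \oplus H_2$, the associated projections $P_1, P_2$ are bounded (the subspaces $H_i$ being closed) and satisfy $P_i S \subset S P_i$ for $i = 1, 2$. Boundedness of the $P_i$ lets me take adjoints to obtain $P_i^* S^* \subset S^* P_i^*$. The standard identities $\range(P_i^*) = (\ker P_i)^\perp$ and $\ker(P_i^*) = (\range(P_i))^\perp$ identify $P_1^*$ as the projection onto $H_2^\perp$ along $H_1^\perp$ and $P_2^*$ as the projection the other way around, giving $H = H_2^\perp \oplus H_1^\perp$; combined with the preceding commutation, this yields the desired decomposition of $S^*$ in the sense of Definition~\ref{def:decomp}.

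Write $S_1 := S|_{H_1}$, and let $Q_1$ denote the \emph{orthogonal} projection of $H$ onto $H_1$. The key step is to show that $Q_1|_{H_2^\perp}\colon H_2^\perp \to H_1$ is a bounded isomorphism: injectivity follows from $H_1^\perp \cap H_2^\perp = (H_1 + H_2)^\perp = \{0\}$, and surjectivity from the identity $Q_1 P_1^*|_{H_1} = \id_{H_1}$, which is a direct duality computation. Since $(u,v)_H = (u,v)_{H_1}$ for $u, v \in H_1$, a further short calculation yields the intertwining relation
\[
  Q_1\, S^*|_{H_2^\perp} \;=\; (S_1)^*\, Q_1|_{H_2^\perp},
\]
where $(S_1)^*$ denotes the adjoint of $S_1$ taken in $H_1$. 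Thus $S^*|_{H_2^\perp}$ is similar to $(S_1)^*$ via the bounded isomorphism $Q_1|_{H_2^\perp}$, so that $\sigma(S^*|_{H_2^\perp}) = \sigma((S_1)^*) = \sigma(S|_{H_1})^*$; the analogous identity for $H_1^\perp$ follows by interchanging the indices.

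For the resolvent norm bound I would fix $z \in \varrho(S_1)$ and introduce the bounded operator $T_z := (S_1 - z)^{-1} P_1$ on $H$, which satisfies $\|T_z\| \le \|(S_1 - z)^{-1}\|\,\|P_1\|$. A direct adjoint computation gives $T_z^* = P_1^*\,((S_1 - z)^{-1})^*\,Q_1$, the middle factor being the $H_1$-adjoint; combining the intertwining identity with $Q_1 P_1^* = \id_{H_1}$ on $H_1$ and the injectivity of $Q_1|_{H_2^\perp}$ then verifies $T_z^*|_{H_2^\perp} = (S^*|_{H_2^\perp} - \bar z)^{-1}$. The first inequality in the lemma is now immediate from $\|P_1^*\| = \|P_1\|$, $\|Q_1\| \le 1$, and the preservation of operator norms under the $H_1$-adjoint; the second inequality follows by symmetry. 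The main obstacle is the non-orthogonality of $H = H_1 \oplus H_2$, which forces one to carefully distinguish $S^*|_{H_2^\perp}$ (the restriction of the $H$-adjoint) from $(S|_{H_1})^*$ (the $H_1$-adjoint of the restriction) and to keep track of the similarity $Q_1|_{H_2^\perp}$ relating them.
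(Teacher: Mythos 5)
Your proposal is correct and is a genuinely different route from the paper's. The first part (obtaining $H=H_2^\perp\oplus H_1^\perp$ and the decomposition of $S^*$ via $P_i^*S^*\subset S^*P_i^*$) coincides with the paper's argument, but for the spectral equality and resolvent bound the paper proceeds directly: it sets $R_1:=(S|_{H_1}-z)^{-1}P_1$, records the four identities $P_1x=R_1(S-z)x$, $P_1x=(S-z)R_1x$, and their adjoints, and then restricts to $H_2^\perp$ to read off $(S^*|_{H_2^\perp}-\bar z)^{-1}=R_1^*|_{H_2^\perp}$. You instead introduce the \emph{orthogonal} projection $Q_1$ onto $H_1$, show $Q_1|_{H_2^\perp}:H_2^\perp\to H_1$ is a bounded isomorphism with inverse $P_1^*|_{H_1}$, and establish the similarity $Q_1\,S^*|_{H_2^\perp}=(S_1)^*\,Q_1|_{H_2^\perp}$ with the intrinsic $H_1$-adjoint $(S_1)^*$. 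This buys a conceptually cleaner explanation of the spectral identity (similar operators have the same spectrum, and $\sigma((S_1)^*)=\sigma(S_1)^*$ by general adjoint theory) and makes explicit the distinction between $S^*|_{H_2^\perp}$ and $(S|_{H_1})^*$ that the paper only hints at in the preceding remark; it is also slightly longer and requires knowing $S_1$ is closed and densely defined in $H_1$ so that adjoint–resolvent identities apply. The final resolvent operator is the same in both proofs ($T_z^*=R_1^*$), so the norm estimates are identical. One point you state as a "short calculation" but which deserves care: the intertwining must hold as an equality of unbounded operators, i.e.\ both inclusions of domains. The forward inclusion $y\in\mdef(S^*)\cap H_2^\perp\Rightarrow Q_1y\in\mdef((S_1)^*)$ is easy; the reverse direction uses $\mdef(S)=(\mdef(S)\cap H_1)\oplus(\mdef(S)\cap H_2)$ to split $(y,Sx)_H$ and show it is bounded in $x$ — this works, but it should be spelled out, since without it you would only get a resolvent-set inclusion rather than equality.
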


\begin{proof}
  We have $I=P_1+P_2$ and $\range(P_j)=H_j$. 
  Hence $P_1^*$, $P_2^*$ are projections
  with  $I=P_1^*+P_2^*$ and
  \[\range(P_{1}^*)=\ker P_{2}^*=\range(P_{2})^\perp=H_{2}^\perp, \quad \range(P_2^*)=H_1^\perp;\]
  in particular, $H=H_2^\perp\oplus H_1^\perp$.

  To show that $H_1^\perp$ is $S^*$-invariant, let 
  $y\in H_1^\perp\cap\mdef(S^*)$ and $x\in H_1\cap\mdef(S)$.
  Then $(S^*y|x)=(y|Sx)=0$ since $Sx\in H_1$.
  Because $\mdef(S)\subset H$ is dense, $H_1\cap\mdef(S)\subset H_1$ is 
  dense, too, and we obtain $S^*y\in H_1^\perp$. Similarly, one can show that
  $H_2^\perp$ is $S^*$-invariant.
  Now let $y\in\mdef(S^*)$ and $x\in\mdef(S)$. 
  Since $(SP_1)^* \supset P_1^*S^*$ and hence $(SP_1)^*|_{\mdef(S^*)} = P_1^*S^*$, we have that
  \[(P_1^*y|Sx)=(y|P_1Sx)=(y|SP_1x)=((SP_1)^*x|x)=(P_1^*S^*y|x)\]
  is bounded in $x$
  and thus $P_1^*y\in\mdef(S^*)$. 
  This implies that $P_2^*y=y-P_1^*y \in \mdef(S^*)$. Thus
  \[\mdef(S^*)=(\mdef(S^*)\cap\range(P_1^*))
    \oplus(\mdef(S^*)\cap\range(P_2^*)),\]
  and so $S^*$  decomposes with respect to
  $H=H_2^\perp\oplus H_1^\perp$.

  Finally, let $z\!\in\!\varrho(S|_{H_1})$ and %consider the operator
  $R_1:=(S|_{H_1}-z)^{-1}P_1\!:H\!\to H$.  Then
%%   \[P_1x=R_1(S-z)x,\;x\in\mdef(S),
%%     \quad\text{and}\quad
%%     P_1x=(S-z)R_1x,\;x\in H,\]
  \begin{alignat*}{2}
    &P_1x=R_1(S-z)x,\qquad &&x\in\mdef(S),
    %\quad\text{and}\quad
    \\
    &P_1x=(S-z)R_1x,&&x\in H.
  \end{alignat*}
  We have $(R_1(S-z))^*=(S^*-\bar z)R_1^*$ because $R_1$ is bounded
  and $((S-z)R_1)^*|_{\mdef(S^*)} = R_1^*(S^*-\bar z)$. Hence we
  obtain
%%   \[P_1^*y=(S^*-\bar{z})R_1^*y,\;y\in H,
%%     \quad\text{and}\quad
%%     P_1^*y=R_1^*(S^*-\bar{z})y,\;y\in\mdef(S^*).\]
  \begin{alignat*}{2}
    &P_1^*y=(S^*-\bar{z})R_1^*y,\qquad &&y\in H,
    %\quad\text{and}\quad
    \\
    &P_1^*y=R_1^*(S^*-\bar{z})y,&&y\in\mdef(S^*).
  \end{alignat*}
  Since 
  \(\overline{\range(R_1^*)}=(\ker R_1)^\perp
    =(\ker P_1)^\perp=H_2^\perp=\range(P_1^*)\),
  this yields
%%   \[y=(S^*|_{H_2^\perp}-\bar{z})R_1^*y,\;y\in H_2^\perp,
%%     \quad\text{and}\quad
%%     y=R_1^*(S^*|_{H_2^\perp}-\bar{z})y,\; y\in\mdef(S^*)\cap H_2^\perp.\]
  \begin{alignat*}{2}
    &y=(S^*|_{H_2^\perp}-\bar{z})R_1^*y, \qquad&&y\in H_2^\perp,
    %\quad\text{and}\quad
    \\
    &y=R_1^*(S^*|_{H_2^\perp}-\bar{z})y,&& y\in\mdef(S^*)\cap H_2^\perp.
  \end{alignat*}
  Consequently, $\bar{z}\in\varrho(S^*|_{H_2^\perp})$ with
  $(S^*|_{H_2^\perp}-\bar{z})^{-1}=R_1^*|_{H_2^\perp}$.  
  Exchanging the roles
  of $S$ and $S^*$ as well as those of $H_1$ and $H_2$, we obtain
  $\varrho(S|_{H_{1/2}})=\varrho(S^*|_{H_{2/1}^\perp})^*$ and
  \[\|(S^*|_{H_{2/1}^\perp}-\bar{z})^{-1}\|
    \leq\|R_{1/2}^*\|=\|R_{1/2}\|\leq
    \|P_{1/2}\|\,\|(S|_{H_{1/2}}-z)^{-1}\|.
  \vspace{-3mm}  
  \]
  \qedup
\end{proof}

\vspace{0.5mm}

\begin{coroll}\label{coroll:adjsectdichot}
  If\, $S$ is a sectorially dichotomous operator with angle 
  $\theta\,\in[0,\pi/2[$ on a
  Hilbert space, 
  then the adjoint $S^*$ is also sectorially dichotomous with angle~$\theta$.
\end{coroll}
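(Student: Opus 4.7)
The plan is to apply the preceding lemma with $H_1 := H_+$ and $H_2 := H_-$, where $H = H_+ \oplus H_-$ is the spectral decomposition of $S$. By that lemma, $S^*$ decomposes with respect to $H = H_-^\perp \oplus H_+^\perp$, with the spectral identifications
\[
  \sigma(S^*|_{H_-^\perp}) = \sigma(S_+)^*, \qquad
  \sigma(S^*|_{H_+^\perp}) = \sigma(S_-)^*.
\]
These will be the candidate spectral subspaces for $S^*$. Since $S$ is dichotomous with gap $h_0 > 0$, the set $\{z \in \C : |\Real z| < h_0\}$ lies in $\varrho(S) = \varrho(S^*)^*$, and since this strip is symmetric under complex conjugation, the same gap is enjoyed by $S^*$. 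Together with $\sigma(S_+)^* \subset \C_+$ and $\sigma(S_-)^* \subset \C_-$ (conjugation preserves the half-planes), this shows that $S^*$ is dichotomous with $(S^*)_+ = S^*|_{H_-^\perp}$ and $(S^*)_- = S^*|_{H_+^\perp}$.

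The next step is to upgrade dichotomy to sectorial dichotomy with angle $\theta$. For $(S^*)_+$, I would use the resolvent estimate from the preceding lemma,
\[
  \|(S^*|_{H_-^\perp} - \bar z)^{-1}\| \leq \|P_+\|\,\|(S_+ - z)^{-1}\|,
\]
combined with the sectoriality of $S_+$. Given $\theta' \in (\theta, \pi]$, sectoriality of $S_+$ yields $M > 0$ with $\|(S_+ - z)^{-1}\| \leq M/|z|$ whenever $|\arg z| \geq \theta'$. Writing $w = \bar z$ and observing that $|\arg w| = |\arg z|$ and $|w| = |z|$, this immediately gives $\|((S^*)_+ - w)^{-1}\| \leq \|P_+\| M / |w|$ for $|\arg w| \geq \theta'$, establishing sectoriality of $(S^*)_+$ with angle $\theta$.

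For $-(S^*)_-$ I would argue analogously, now using the second resolvent estimate of the lemma and the sectoriality of $-S_-$. Setting $z = -\bar w$, the lemma yields
\[
  \|(-(S^*)_- - w)^{-1}\| = \|(S^*|_{H_+^\perp} + w)^{-1}\| \leq \|P_-\|\,\|(-S_- - \bar w)^{-1}\|,
\]
and for $|\arg w| \geq \theta'$ one has $|\arg \bar w| = |\arg w| \geq \theta'$, so sectoriality of $-S_-$ with angle $\theta$ gives the required bound $\|(-(S^*)_- - w)^{-1}\| \leq \|P_-\| M / |w|$.

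I do not expect any serious obstacle: the preceding lemma does the heavy lifting by producing both the invariant decomposition of $S^*$ and resolvent estimates on the restrictions in terms of those of $S_\pm$. The only points to watch are the elementary bookkeeping of complex conjugates (and in the second case a sign) when transporting the sector estimates from $S_\pm$ to $(S^*)_\mp$, which is harmless because $z \mapsto \bar z$ preserves $|\arg z|$ and $|z|$, and the observation that the dichotomy gap of $S$ transfers verbatim to $S^*$ by the symmetry $\sigma(S^*) = \sigma(S)^*$.
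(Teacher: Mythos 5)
Your proposal is correct and follows essentially the same route as the paper: apply the preceding lemma with $H_1=H_+$, $H_2=H_-$ to get the decomposition $H=H_-^\perp\oplus H_+^\perp$ for $S^*$, then transfer the spectral inclusion and resolvent estimates from $S_\pm$ to $S^*|_{H_\mp^\perp}$ via complex conjugation. The only cosmetic difference is that you spell out the dichotomy-gap and sign bookkeeping explicitly where the paper compresses it into ``an analogous reasoning applies,'' but the substance is identical.
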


\begin{proof}
  Let $H=H_+\oplus H_-$ be the decomposition corresponding to $S$
  and $h_0>0$~the dichotomy gap of $S$.
  Then $S^*$ decomposes with respect to $H=H_-^\perp\oplus H_+^\perp$.~Moreover,
  \[
    \sigma(S^*|_{H_-^\perp})=\sigma(S|_{H_+})^*
      \subset\bigset{z\in\Sigma_\theta}{\Real z\geq h_0}, \]
  and for $\theta'>\theta$ there exists $M>0$ such that
  \[\|(S^*|_{H_-^\perp}-z)^{-1}\|
      \leq \|P_+\|\|(S|_{H_+}-\bar{z})^{-1}\|\leq\frac{M\|P_+\|}{|z|},
      %\quad\text{for}
      \quad|\arg z|\geq\theta'.\]
  An analogous reasoning applies to $-S^*|_{H_+^\perp}$,
  and we conclude that $S^*$ is sectorially dichotomous with angle $\theta$.
\end{proof}

\section{{\boldmath $p$}-subordinate perturbations}
\label{sec:psub}

In this section we show that bisectoriality is stable under
$p$-subordinate perturbations and that $p$-subordinate perturbations
of sectorially dichotomous operators are still dichotomous.
To begin with, we briefly recall 
the concept of $p$-subordinate perturbations which was studied e.g.\ in
\cite[\S I.7.1]{krein} and \cite[\S5]{markus}.

\begin{definition}
  Let $S,R$ be linear operators on a Banach space.
  \begin{itemize}
  \item[(i)] $R$ is called \emph{relatively bounded} with respect to
    $S$ or \emph{$S$-bounded} if $\mdef(S)\subset\mdef(R)$ and there 
    exist $a,b\geq0$ such that    
    \begin{equation}\label{eq:relbnd}
      \|Rx\|\leq a\|x\|+b\|Sx\|, \quad%\text{for}\quad 
       x\in\mdef(S);
    \end{equation}
    the infimum of all $b$ such that \eqref{eq:relbnd} holds with some
    $a\geq 0$ is called the \emph{relative bound} of $R$ 
    with respect to $S$ or \emph{$S$-bound} of $R$.
  \item[(ii)]
    $R$ is called \emph{$p$-subordinate} to $S$ with $p\in[0,1]$ 
    if $\mdef(S)\subset\mdef(R)$
    and there exists $c\geq0$ such that 
    \begin{equation}\label{eq:psub}
      \|Rx\|\leq c\|x\|^{1-p}\|Sx\|^p, \quad
      %\text{for}\quad 
      x\in\mdef(S);
    \end{equation}
    the minimal constant $c$ such that \eqref{eq:psub} holds is called
    the \emph{$p$-subordination bound} of $R$ to $S$.
  \end{itemize}
\end{definition}

Note that, in contrast to the relative bound, the infimum over all $c$ that satisfy \eqref{eq:psub} does itself satisfy \eqref{eq:psub} and
hence the $p$-subordination bound is indeed a minimum.

\begin{lemma}\label{lem:psub}
  Let $S$, $R$ be linear operators satisfying  $\mdef(S)\subset\mdef(R)$ and let $p\in[0,1]$.
\begin{itemize}
\item[{\rm (i)}]
  $R$ is $p$-subordinate to $S$ if and only if
  there exists a constant
  $c'\geq0$ such that
  \begin{equation}\label{eq:psubequiv}
    \|Rx\|\leq c'(\eps^{-p}\|x\|+\eps^{1-p}\|Sx\|), \quad%\text{for all}\quad
    x\in\mdef(S),\ \eps>0.
  \end{equation}
\item[{\rm (ii)}]
  If\, $R$ is $p$-subordinate to $S$ with $p<1$, then $R$
  is $S$-bounded with $S$-bound $0$.
\item[{\rm (iii)}]
  If\, $0\in\varrho(S)$ and $R$ is $p$-subordinate to $S$, then 
  $R$ is $q$-subordinate to $S$ for every $q>p$.
\end{itemize}
\end{lemma}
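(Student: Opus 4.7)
The plan is to handle the three parts in sequence, reducing (ii) to (i) and proving (iii) directly via the boundedness of $S^{-1}$.

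For (i), in the forward direction I would observe the identity
\[
  \|x\|^{1-p}\|Sx\|^p = (\eps^{-p}\|x\|)^{1-p}(\eps^{1-p}\|Sx\|)^p,
\]
which holds because $\eps^{-p(1-p)+(1-p)p}=1$, and then apply Young's inequality $a^{1-p}b^p\leq (1-p)a+pb\leq a+b$ to conclude \eqref{eq:psubequiv} with $c'=c$. For the converse, I would balance the two summands on the right-hand side of \eqref{eq:psubequiv} by choosing $\eps=\|x\|/\|Sx\|$ when both $x$ and $Sx$ are nonzero; each summand then equals $c'\|x\|^{1-p}\|Sx\|^p$, yielding \eqref{eq:psub} with constant $2c'$. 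The degenerate cases $x=0$, $Sx=0$, or $p\in\{0,1\}$ are handled separately by letting $\eps\to 0$ or $\eps\to\infty$ in \eqref{eq:psubequiv}.

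For (ii), the assumption $p<1$ forces $1-p>0$, so $\eps^{1-p}\to 0$ as $\eps\downarrow 0$ in \eqref{eq:psubequiv}. Thus for any prescribed $b>0$ I can pick $\eps>0$ small enough that $c'\eps^{1-p}\leq b$; setting $a:=c'\eps^{-p}$, the estimate becomes $\|Rx\|\leq a\|x\|+b\|Sx\|$, which is a relative bound in the sense of \eqref{eq:relbnd} with parameter $b$ arbitrarily small. Hence $R$ has $S$-bound zero.

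For (iii), the hypothesis $0\in\varrho(S)$ makes $S^{-1}$ bounded on $V$, so $\|x\|\leq\|S^{-1}\|\,\|Sx\|$ for every $x\in\mdef(S)$. For $q>p$ this interpolates as
\[
  \|x\|^{1-p}\|Sx\|^p
  = \|x\|^{1-q}\|x\|^{q-p}\|Sx\|^p
  \leq \|S^{-1}\|^{q-p}\|x\|^{1-q}\|Sx\|^q,
\]
so \eqref{eq:psub} with exponent $p$ and constant $c$ yields $\|Rx\|\leq c\|S^{-1}\|^{q-p}\|x\|^{1-q}\|Sx\|^q$, i.e.\ $q$-subordinacy. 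The only mild obstacle throughout is the careful handling of degenerate vectors in (i) and making sure that the constants obtained from the two directions are consistent; otherwise the proofs are direct applications of Young's inequality and the bounded-inverse hypothesis.
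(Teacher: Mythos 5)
Your proof is correct and takes essentially the same route as the paper: the paper simply cites Krein for (i), derives (ii) from (i), and for (iii) states precisely the interpolation inequality $\|x\|^{1-p}\leq\|S^{-1}\|^{q-p}\|x\|^{1-q}\|Sx\|^{q-p}$ that you rederive. Your explicit Young's-inequality and $\eps$-balancing argument for (i) is the standard proof behind the Krein reference, so this is a faithful filling-in of the cited step rather than a different approach.
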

\begin{proof}
  (i) was proved in \cite[page 146]{krein}, (ii) follows from (i),
  and (iii) is a consequence of the inequality
  \(\|x\|^{1-p}\leq\|x\|^{1-q}\|S^{-1}\|^{q-p}\|Sx\|^{q-p}\), 
  $x\in \mdef(S)$.
\end{proof}
%% \begin{proof}
%%   (i):
%%   The case $p=0$ is clear.
%%   Let $R$ be $p$-subordinate to $S$ with $p>0$.
%%   If $Sx\neq0$, then we can set $\delta=\eps\|x\|\|Sx\|^{-1}$ and use
%%   $1\leq\delta^{p}+\delta^{p-1}$ to obtain
%%   \[\|Rx\|\leq c\bigl(\delta^{p}\|x\|^{1-p}\|Sx\|^p
%%     +\delta^{p-1}\|x\|^{1-p}\|Sx\|^p\bigr)
%%     =c(\eps^{p}\|x\|+\eps^{p-1}\|Sx\|).\]
%%   If $Sx=0$, then \eqref{eq:psub} implies $Rx=0$ and \eqref{eq:psubequiv}
%%   holds too.
%%   Conversely, suppose that \eqref{eq:psubequiv} holds.
%%   For $Sx\neq0$
%%   we can choose $\eps=\|Sx\|/\|x\|$ which yields \eqref{eq:psub} with
%%   $c=2c'$. 
%%   If $Sx=0$, then taking the limit $\eps\to0$ in \eqref{eq:psubequiv}
%%   we find again $Rx=0$.
%% \end{proof}

The following lemma provides conditions guaranteeing that e.g.\ a
multiplication operator $R$ in $L^q(\Omega)$ with $q\in[1,\infty[$ and
open $\Omega\subset \R^n$ is $p$-subordinate to an elliptic partial
differential operator $S$ of order $m>0$; more generally, $R$ may
also be a partial differential operator of order $k\le m$.

In fact, if $W^{m,q}(\Omega)$ denotes the usual Sobolev
space of $m$ times weakly differentiable functions with derivatives
in $L^q(\Omega)$, then 
we consider operators $S$ on $L^q(\Omega)$ such that 
$\mdef(S)\subset W^{m,q}(\Omega)$ and $S$ satisfies a so-called 
\emph{a priori estimate},
\begin{equation}\label{eq:apriori}
  \|u\|_{W^{m,q}(\Omega)}\leq c_0(\|u\|_{L^q(\Omega)}+\|Su\|_{L^q(\Omega)}),
  \quad u\in\mdef(S),
\end{equation}
with some constant $c_0>0$; such an estimate holds e.g.\ if $S$ is a
properly elliptic partial differential operator of order $m$ with
appropriate boundary conditions,
see \cite[Chap.~2, \S5]{lions-magenes}, 
\cite[\S5.3]{triebel}.

\begin{lemma}\label{lem:multop}
  Let $S$ be a linear operator on $L^q(\Omega)$, $q\in[1,\infty[$,
  such that $0\in\varrho(S)$, $\mdef(S)\subset W^{m,q}(\Omega)$ and an
  a priori estimate \eqref{eq:apriori} holds.
  \begin{itemize}
  \item[{\rm (i)}] Let $\Omega=\R^n$,
    $g:\R^n\to\C$ a locally integrable function, and 
    let $R$ be the corresponding $($maximal$)$ multiplication operator, 
  \[Ru:=gu,\qquad \mdef(R):=\set{u\in L^q(\R^n)}{gu\in L^q(\R^n)}.\]
  If there exist $s\in[0,n]$ and $c_1>0$ such that
  \begin{equation}\label{eq:multopcond}
    \int_{B_r(x_0)}|g(x)|^q\,dx\leq c_1r^s,
    \quad%\text{for all}\quad 
    x_0\in\R^n,\ 0<r<1,
  \end{equation}
  and if %$s>n-mq$ for $q>1$ and $s\geq n-m$ for $q=1$, 
  \[
    \left\{ \begin{array}{ll} 
       s>n-mq & \mbox{ if } \ q>1, \\
       s\geq n-m & \mbox{ if } \ q=1, 
       \end{array} \right.
  \]
  then $R$ is $p$-subordinate to $S$ with $\displaystyle p=\frac{1}{mq}(n-s)$.
  \item[{\rm (ii)}]
  If $\,R$ is a partial differential operator on $L^q(\Omega)$ of order 
  $k\leq m$ with
  coefficients in $L^\infty(\Omega)$, then $R$ is $\dfrac km$-subordinate to $S$. 
  \end{itemize}
\end{lemma}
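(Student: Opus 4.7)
My plan is to reduce both parts to a single Sobolev-type estimate, namely
\[
\|Ru\|_{L^q(\Omega)}\le c\,\|u\|_{L^q(\Omega)}^{1-p}\|u\|_{W^{m,q}(\Omega)}^p,\qquad u\in W^{m,q}(\Omega),
\]
with $p$ as in the statement of the lemma. This suffices, because $0\in\varrho(S)$ yields $\|u\|_{L^q}\le\|S^{-1}\|\,\|Su\|_{L^q}$ and, together with the a priori estimate \eqref{eq:apriori},
\[
\|u\|_{W^{m,q}(\Omega)}\le c_0\bigl(1+\|S^{-1}\|\bigr)\|Su\|_{L^q(\Omega)},\qquad u\in\mdef(S);
\]
substituting this back produces $p$-subordination of $R$ to $S$ with the same $p$.

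For part (ii) the argument is immediate: since $R$ is a differential operator of order $k\le m$ with $L^\infty$-coefficients, the triangle inequality gives $\|Ru\|_{L^q(\Omega)}\le C\|u\|_{W^{k,q}(\Omega)}$, and the Gagliardo--Nirenberg interpolation inequality
\[
\|u\|_{W^{k,q}(\Omega)}\le C'\|u\|_{L^q(\Omega)}^{1-k/m}\|u\|_{W^{m,q}(\Omega)}^{k/m}
\]
delivers the desired Sobolev-type estimate with $p=k/m$.

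For part (i) the correct exponent is dictated by scaling: for a bump $u$ concentrated in a ball $B_r(x_0)$ of radius $r<1$ with $\|u\|_{L^q}=1$, one finds $\|u\|_{W^{m,q}}\sim r^{-m}$ and, using \eqref{eq:multopcond} together with a pointwise bound on $u$, $\|gu\|_{L^q}\sim r^{(s-n)/q}$; matching $r^{-mp}$ with $r^{(s-n)/q}$ pins down $p=(n-s)/(mq)$. To make this rigorous I would first reduce matters to the Morrey--potential estimate
\[
\|gu\|_{L^q(\R^n)}\le C\|u\|_{H^{mp,q}(\R^n)}
\]
into a Bessel-potential space of fractional order $mp=(n-s)/q$, and then apply the standard interpolation inequality
\[
\|u\|_{H^{mp,q}(\R^n)}\le C''\,\|u\|_{L^q(\R^n)}^{1-p}\,\|u\|_{W^{m,q}(\R^n)}^{p}.
\]
The Morrey--potential estimate itself I would establish by covering $\R^n$ with balls $B_r(x_j)$ of bounded overlap, applying Hölder's inequality on each ball to split off $\int_{B_r(x_j)}|g|^q\le c_1 r^s$ from \eqref{eq:multopcond} against a local Sobolev embedding $W^{mp,q}(B_r)\hookrightarrow L^{nq/(n-mpq)}(B_r)$, summing over the covering, and optimising the scale $r\in(0,1)$ through the $\varepsilon$-characterisation \eqref{eq:psubequiv} of subordination (with the choice $\varepsilon\sim r^m$).

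The main obstacle lies precisely in this Sobolev step, which sits on the natural threshold: the strict inequality $s>n-mq$ for $q>1$ translates into $mpq<n$, making the embedding $W^{mp,q}\hookrightarrow L^{nq/(n-mpq)}$ available and giving $p<1$ so that Lemma~\ref{lem:psub}\,(ii) applies. The weaker condition $s\ge n-m$ allowed when $q=1$ accommodates the endpoint embedding $W^{m,1}\hookrightarrow L^{n/(n-m)}$ (or $L^\infty$ when $m\ge n$), where the relevant norm already sits at the edge of Sobolev embedding without a gap that would have to be absorbed by a strict inequality.
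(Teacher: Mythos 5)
Your reduction of both parts to a two-sided Sobolev interpolation estimate — and the elimination of $\|u\|_{L^q}$, $\|u\|_{W^{m,q}}$ in favour of $\|Su\|_{L^q}$ via $0\in\varrho(S)$ and the a priori estimate \eqref{eq:apriori} — is exactly the paper's argument, and your proof of part (ii) through the intermediate-derivative (Gagliardo--Nirenberg) interpolation inequality is likewise the paper's. For part (i), however, the paper does not pass through a fractional Bessel-potential space at all: it invokes a theorem of Maz'ya \cite[\S1.4.7, Corollary~1]{mazya}, which for the measure $d\mu=|g(x)|^q\,dx$ with $\mu(B_r)\le c_1 r^s$ gives directly $\|u\|_{L^q(\mu)}\le c_2\|u\|_{W^{m,q}(\R^n)}^p\|u\|_{L^q(\R^n)}^{1-p}$. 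You instead try to give a self-contained proof of the underlying trace inequality, and there the sketch has a genuine gap.

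The problem is the local H\"older step. On a ball $B_r(x_j)$ you want to split $\int_{B_r}|g|^q|u|^q\,dx$ into a factor controlled by $\int_{B_r}|g|^q\le c_1r^s$ and a factor $\|u\|_{L^{nq/s}(B_r)}^q$. But with conjugate exponents $(a,a')$ H\"older gives
\[
\int_{B_r}|g|^q|u|^q\,dx\;\le\;\Bigl(\int_{B_r}|g|^{qa}\,dx\Bigr)^{1/a}\Bigl(\int_{B_r}|u|^{qa'}\,dx\Bigr)^{1/a'},
\]
and forcing $qa'=nq/s$ pins $a=n/(n-s)>1$, so the first factor involves $\int_{B_r}|g|^{qn/(n-s)}\,dx$ — a strictly higher power of $g$ than \eqref{eq:multopcond} controls, and one that need not even be finite. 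The only H\"older pair compatible with the hypothesis is $(a,a')=(1,\infty)$, which requires the embedding $W^{m,q}(B_r)\hookrightarrow L^\infty(B_r)$ and hence $mq>n$; in that range your covering-and-rescaling argument does close, with $\eps\sim r^m$ in \eqref{eq:psubequiv} producing $p=(n-s)/(mq)$. But in the subcritical regime $mq\le n$, $\ n-mq<s\le n$, which the lemma also covers and which arises in the paper's first example as soon as $n\ge 4$, the function $u\in W^{m,q}$ need not be locally bounded and no ball-by-ball H\"older split can absorb the Morrey bound on $g$. What makes the Maz'ya/Adams trace inequality work there is a genuinely potential-theoretic argument on the representation of $u$ as an $m$-th order potential of an $L^q$ function — a global mechanism that disappears once you estimate each ball in isolation. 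So either cite the trace theorem as the paper does, or replace the covering-plus-local-embedding step by that potential argument; as written, your route does not extend past $mq>n$.
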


\begin{proof}
  (i) Consider the measure $\mu$ on $\R^n$ given by
  \(\mu(A)=\int_A|g(x)|^q\,dx\). Then $\|gu\|_{L^q(\R^n)}=\|u\|_{L^q(\mu)}$.
  Due to assumption  \eqref{eq:multopcond}, we can apply
  \cite[\S1.4.7, Corollary~1]{mazya}
  and estimate 
  \begin{equation} 
  \label{eq:multsub}
  \|u\|_{L^q(\mu)}\leq c_2\|u\|_{W^{m,q}(\R^n)}^p\|u\|^{1-p}_{L^q(\R^n)},
  \quad u\in W^{m,q}(\R^n),
  \end{equation}
  with some constant $c_2>0$.
  The estimate \eqref{eq:apriori} together with $0\in\varrho(S)$ implies that
  \[\|u\|_{W^{m,q}(\R^n)}\leq c_0(1+\|S^{-1}\|)\|Su\|_{L^q(\R^n)},
   \quad u\in \mdef(S),\]
  and hence the subordination inequality \eqref{eq:psub} follows.
  
  (ii) The proof of (ii) is similar to that of (i) if, instead of
  \eqref{eq:multsub}, we use the 
  interpolation  inequality for intermediate derivatives, see
   \cite[Theorem~5.2]{adams-fournier},
   \begin{align*}
     \|u\|_{W^{k,q}(\Omega)}
    \leq c\|u\|_{W^{m,q}(\Omega)}^{k/m}\|u\|_{L^q(\Omega)}^{1-k/m},
    \quad u\in W^{k,q}(\Omega). \\[-13mm]
    \end{align*}
\end{proof}

\begin{remark}
  The subordination property in (ii) was used e.g.\ in 
  \cite[\S10]{markus} and \cite{wyss-psubpert,wyss-rinvsubham} 
  to obtain expansions in eigenfunctions of $S+R$. 
\end{remark}

%% \begin{lemma}
%%   Let $S$ be an operator on $L^q(\Omega)$, $q\in[1,\infty[$, 
%%   $\Omega\subset\R^n$ open,
%%   such that $0\in\varrho(S)$, $\mdef(S)\subset W^{m,q}(\Omega)$ and
%%   \eqref{eq:apriori} holds.
%%   Let $R$ be a differential operator on $L^q(\Omega)$ of order $k<m$ with
%%   bounded coefficient functions,
%%   \[Ru=\sum_{|\alpha|\leq k}a_\alpha\partial^\alpha u,
%%   \qquad \mdef(R)=W^{k,q}(\Omega),\qquad
%%   a_\alpha\in L^\infty(\Omega).\]
%%   Then $R$ is $k/m$-subordinate to $S$.
%% \end{lemma}

%% \begin{proof}
%%   Using an interpolation inequality for intermediate derivatives
%%   \cite[Theorem~5.2]{adams-fournier}, we have
%%   \[\|Ru\|\leq c_1\|u\|_{W^{k,q}(\Omega)}
%%     \leq c_1c_2\|u\|_{W^{m,q}(\Omega)}^{k/m}\|u\|_{L^q(\Omega)}^{1-k/m},
%%     \qquad u\in W^{k,q}(\Omega).\]
%%   As in the previous proof, \eqref{eq:apriori} and $0\in\varrho(S)$ imply
%%   the subordination property.
%% \end{proof}

Next we show that bisectoriality is stable under $p$-subordinate perturbations and we study their effect on the spectrum, see Figure \ref{fig:psubpert}.

\begin{lemma}\label{lem:psubpert}
  Let $S$, $R$ be linear operators,
  $S$ bisectorial with angle $\theta \in [0, \pi/2[$ 
  and radius $r\geq0$, and $R$ $p$-subordinate to $S$ with $p\in[0,1]$.
  \begin{itemize}
  \item[{\rm (i)}] 
    For every $\theta'\in\,]\theta,\pi/2]$
    there exists $M'\geq0$ such that
    \begin{equation}\label{eq:psubest}
      \|R(S-z)^{-1}\|\leq\frac{M'}{|z|^{1-p}}, \quad %\text{for all}\quad
      z\in\Omega_{\theta',r},
    \end{equation}
    where $\Omega_{\theta',r}\!=\!\set{z\in\C} {\theta'\leq|\arg z|\leq\pi\!-\!\theta',\;|z|>r}$, see \eqref{eq:bisect}.
  \item[{\rm (ii)}]
    If $R$ is  even $p$-subordinate to $S$ with $p<1$, then for
    every $\theta'\in\,]\theta,\pi/2[$
    there exists
    $r'\geq r$ such that $S+R$ is bisectorial with angle $\theta'$ and 
    radius $r'$.
  \end{itemize}
\end{lemma}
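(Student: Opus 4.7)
For (i), the plan is to apply the $p$-subordination inequality \eqref{eq:psub} to the vector $x := (S-z)^{-1}y$ for arbitrary $y\in V$; this is legitimate since $(S-z)^{-1}$ maps $V$ into $\mdef(S)\subset\mdef(R)$. The resulting bound involves $\|x\|$ and $\|Sx\|$, both of which can be controlled on $\Omega_{\theta',r}$ by the bisectoriality of $S$: the resolvent estimate \eqref{eq:bisectest} gives $\|x\|\le (M/|z|)\|y\|$, while the algebraic identity $S(S-z)^{-1}=I+z(S-z)^{-1}$ yields the uniform bound $\|Sx\|\le(1+M)\|y\|$. Raising the two bounds to the powers $1-p$ and $p$ respectively and multiplying produces the claim with $M'=cM^{1-p}(1+M)^p$.

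For (ii), the plan is to use the standard factorisation
\[
S+R-z \,=\, (I+R(S-z)^{-1})(S-z),
\]
valid on $\mdef(S+R)=\mdef(S)$, where the equality of domains holds because $R$ is $S$-bounded with $S$-bound $0$ by Lemma~\ref{lem:psub}(ii) (so $S+R$ is closed on $\mdef(S)$). By part~(i), $\|R(S-z)^{-1}\|\le M'/|z|^{1-p}$ on $\Omega_{\theta',r}$, and since $p<1$ this goes to $0$ as $|z|\to\infty$; I would fix $r'\ge r$ large enough that this norm stays below $1/2$ on $\Omega_{\theta',r'}$. A Neumann series then inverts $I+R(S-z)^{-1}$ with uniform operator norm at most $2$, giving
\[
(S+R-z)^{-1} \,=\, (S-z)^{-1}(I+R(S-z)^{-1})^{-1}
\]
on $\Omega_{\theta',r'}$, together with the resolvent estimate $\|(S+R-z)^{-1}\|\le 2M/|z|$. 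This inclusion $\Omega_{\theta',r'}\subset\varrho(S+R)$ also pins down the spectrum, $\sigma(S+R)\subset\Sigma_{\theta'}\cup(-\Sigma_{\theta'})\cup\overline{B_{r'}(0)}$.

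To finish, one needs the bisectorial resolvent bound at every sub-angle $\theta''\in(\theta',\pi/2]$, which is obtained by repeating the above Neumann argument with $\theta''$ in place of $\theta'$ and bisectorial constant $M''$ for $S$ at angle $\theta''$. The key point is that a single radius $r'$ works for all such $\theta''$, because $\Omega_{\theta'',r'}\subset\Omega_{\theta',r'}$ and the smallness of $\|R(S-z)^{-1}\|$ on the larger bisector transfers automatically. I expect no serious obstacle: the only delicate point is the domain identity $\mdef(S+R)=\mdef(S)$, which is exactly why the hypothesis $p<1$ (hence $S$-bound $0$ via Lemma~\ref{lem:psub}(ii)) is invoked; everything else is bookkeeping of the constants.
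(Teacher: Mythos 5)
Your argument is correct and follows essentially the same route as the paper: for (i) you apply the $p$-subordination inequality to $x=(S-z)^{-1}y$ and use the bisectorial resolvent bound $\|(S-z)^{-1}\|\le M/|z|$ together with $\|S(S-z)^{-1}\|\le 1+M$ to get $M'=cM^{1-p}(1+M)^p$, exactly as in the paper; for (ii) you use the factorisation $S+R-z=(I+R(S-z)^{-1})(S-z)$, choose $r'$ so that $\|R(S-z)^{-1}\|\le 1/2$ on $\Omega_{\theta',r'}$ (possible since $p<1$ makes the bound decay), and invert by Neumann series, again exactly as in the paper. Your closing remark about checking the resolvent estimate at every sub-angle $\theta''>\theta'$ is a point the paper leaves implicit; your observation that the uniform bound on the larger bisector $\Omega_{\theta',r'}$ already covers all $\Omega_{\theta'',r'}\subset\Omega_{\theta',r'}$ with the single constant $2M$ is the right way to see it, so no separate argument per $\theta''$ is actually needed.
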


\begin{figure}[h]
  \begin{center}
    \includegraphics{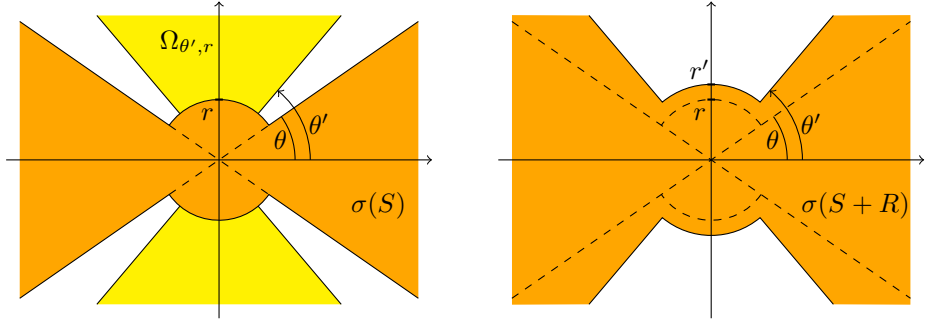} 
 \end{center}
  \caption{The perturbation of the spectrum of a bisectorial operator
    in Lemma~\ref{lem:psubpert}.}
  \label{fig:psubpert}
\end{figure}

\begin{proof}
  (i) Let $\theta'\in\,]\theta,\pi/2]$. Then, for
  $z\in\Omega_{\theta',r}$, we use \eqref{eq:bisectest} to estimate
  \[\|S(S-z)^{-1}\|\leq 1+|z|\cdot\|(S-z)^{-1}\|\leq 1+M.\]
  Hence, if $R$ is $p$-subordinate to $S$, then
  \[\|R(S-z)^{-1}\|\leq c\|(S-z)^{-1}\|^{1-p}\|S(S-z)^{-1}\|^p
    \leq c\cdot\biggl(\frac{M}{|z|}\biggr)^{1-p}(1+M)^p.\]

  (ii)  Let  $\theta'\in\,]\theta,\pi/2[\,$.
  By \eqref{eq:psubest} there exists $r'\geq r$ such that
  $\|R(S-z)^{-1}\|\leq1/2$ for all $z\in\Omega_{\theta',r'}$.
  A  Neumann series argument then implies that $z\in\varrho(S+R)$ for 
  $z\in\Omega_{\theta',r'}$, 
  \[(S+R-z)^{-1}=(S-z)^{-1}\bigl(I+R(S-z)^{-1}\bigr)^{-1},\]
  and $\|(S+R-z)^{-1}\|\leq2\|(S-z)^{-1}\|$. This
  and \eqref{eq:bisectest} imply that $S+R$ is bisectorial with
  angle $\theta'$ and radius $r'$.
\end{proof}

\begin{remark}
If the unperturbed operator $S$ in Lemma \ref{lem:psubpert} is
selfadjoint, and hence bisectorial with angle $\theta=0$, then the spectral
inclusion implied by Lemma \ref{lem:psubpert}~(ii) and displayed in
Figure \ref{fig:psubpert} follows from the spectral enclosure
\cite[Corollary~2.4]{cuenin-tretter} since $p$-subordinate
perturbations with $p<1$ have relative bound $0$.
\end{remark}

For bisectorial operators with radius $r=0$, the estimate
\eqref{eq:psubest} is, in fact, an equivalent characterisation of
$p$-subordinacy:

\begin{lemma}\label{lem:psubequiv}
  Let $S$, $R$ be linear operators,  
  $S$ bisectorial with angle $\theta \in [0, \pi/2[$ 
  and radius $r\geq0$, $\mdef(S)\subset\mdef(R)$, and $p\in[0,1]$.
  If there exist $\theta'\in\,]\theta,\pi/2]$ and $M'\geq0$
  with
  \begin{equation}\label{eq:RS-est}
    \|R(S-z)^{-1}\|\leq\frac{M'}{|z|^{1-p}}, \quad %\text{for all}\quad
    z\in\Omega_{\theta',0},
  \end{equation}
  where $\Omega_{\theta',0}\!=\!\set{z\in\C} {\theta'\leq|\arg z|\leq\pi\!-\!\theta',\;|z|>0}$, then $R$ is $p$-subordinate to $S$.
\end{lemma}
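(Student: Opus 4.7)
The idea is to invert the scaling argument of Lemma~\ref{lem:psubpert}\,(i): the hypothesis is exactly the conclusion of that result on the full bisector $\Omega_{\theta',0}$, and I want to recover the $p$-subordinacy inequality \eqref{eq:psub} from it. Note that the hypothesis implicitly includes $\Omega_{\theta',0}\subset\varrho(S)$ (otherwise the norm in \eqref{eq:RS-est} would be meaningless), which is a genuine strengthening of the bisectoriality assumption when $r>0$.

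Fix $x\in\mdef(S)$ and $z\in\Omega_{\theta',0}$. The elementary identity
\[
  x=(S-z)^{-1}Sx-z(S-z)^{-1}x,
\]
together with $\mdef(S)\subset\mdef(R)$, gives $Rx=R(S-z)^{-1}Sx-zR(S-z)^{-1}x$, hence, using \eqref{eq:RS-est},
\[
  \|Rx\|\leq\|R(S-z)^{-1}\|\,\|Sx\|+|z|\,\|R(S-z)^{-1}\|\,\|x\|
        \leq M'\,|z|^{p-1}\|Sx\|+M'\,|z|^{p}\|x\|.
\]
Since $\theta'\leq\pi/2$, the whole punctured imaginary axis lies in $\Omega_{\theta',0}$, so choosing $z=\mathrm{i}t$ lets $|z|=t$ range freely over $(0,\infty)$.

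It remains to optimise. Assume first $p\in\,]0,1[\,$ and $\|x\|,\|Sx\|>0$. Minimising $f(t):=t^{p-1}\|Sx\|+t^{p}\|x\|$ over $t>0$ by elementary calculus, the minimum is attained at $t_*=\frac{1-p}{p}\,\|Sx\|/\|x\|$ and equals $C_p\,\|x\|^{1-p}\|Sx\|^{p}$ with a constant $C_p>0$ depending only on $p$. This yields the $p$-subordinacy bound $\|Rx\|\leq M'C_p\,\|x\|^{1-p}\|Sx\|^{p}$.

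The degenerate cases are dispatched by passing to the limit in the same estimate: if $x=0$ both sides vanish; if $Sx=0$ and $p>0$, letting $t\to 0^+$ gives $Rx=0$; if $p=1$, letting $t\to 0^+$ yields $\|Rx\|\leq M'\|Sx\|$; if $p=0$, letting $t\to\infty$ yields $\|Rx\|\leq M'\|x\|$. I do not foresee a serious obstacle; the only point worth checking is that the optimal radius $t_*$ is realised by some $z\in\Omega_{\theta',0}$, and this is immediate from $\mathrm{i}t_*\in\Omega_{\theta',0}$.
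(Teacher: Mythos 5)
Your proof is correct and takes essentially the same route as the paper: both derive the intermediate bound $\|Rx\|\leq M'\bigl(|z|^p\|x\|+|z|^{p-1}\|Sx\|\bigr)$ from \eqref{eq:RS-est} and then let $|z|$ range over $(0,\infty)$ along the imaginary axis. The only cosmetic difference is that the paper substitutes $z=\mathrm{i}\varepsilon^{-1}$ and invokes the equivalent characterisation \eqref{eq:psubequiv} of $p$-subordinacy from Lemma~\ref{lem:psub}\,(i), whereas you carry out the optimisation over $t=|z|$ explicitly, which is precisely what that cited equivalence encapsulates.
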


\begin{proof}
  The estimate \eqref{eq:RS-est} 
  implies that
  \[\|Rx\|\leq\frac{M'}{|z|^{1-p}}\|(S-z)x\|
    \leq M'\bigl(|z|^p\|x\|+|z|^{p-1}\|Sx\|\bigr),
    \quad x\in\mdef(S).\]
%%   Choosing $z=\ri t$, $t>0$, 
  Choosing $z=\ri\eps^{-1}$, $\eps>0$, 
  we obtain \eqref{eq:psubequiv}; Lemma \ref{lem:psub}~(i) thus yields the claim.
\end{proof}

\begin{remark}\label{rem:psub}
  A result analogous to Lemma~\ref{lem:psubpert} holds for any subset
  $\Omega\subset\varrho(S)\setminus\{0\}$ such that there is
  an estimate \eqref{eq:bisectest} on $\Omega$
  instead of $\Omega_{\theta',r}$; in this case
  $S$ is not required to be bisectorial. In the same way  
  Lemma~\ref{lem:psubequiv} can be generalised 
  if, in addition, $\Omega$ satisfies
  the condition $\set{|z|}{z\in\Omega}=\R_+$.
\end{remark}

%% \begin{lemma}\label{lem:dsect}
%%   Let $S$ be a linear operator such that
%%   for some $\varphi\in[0,\pi/2[$ and $M\in\R$ we have
%%   $\Omega_\varphi\subset\varrho(S)$ and
%%   \begin{equation}\label{eq:resolvest}
%%     \|(S-z)^{-1}\|\leq \frac{M}{|z|}, \quad z\in\Omega_\varphi.
%%   \end{equation}
%%   Then there exists $\theta\in[0,\pi/2-\varphi[$ such that
%%   \[\sigma(S)\subset\Sigma_\theta\cup-\Sigma_\theta\]
%%   and for every $\varphi'<\pi/2-\theta$ the estimate
%%   \eqref{eq:resolvest} also holds on $\Omega_{\varphi'}$, with some
%%   different constant $M'\in\R$.
%% \end{lemma}
%% \begin{proof}
%%   It suffices to show that \eqref{eq:resolvest} holds on $\Omega_{\varphi'}$
%%   for some $\varphi'>\varphi$. Then $\pi/2-\theta$ is simply the supremum
%%   over all such $\varphi'$.
%%   Choose $\varphi<\varphi'<\pi/2$ such that
%%   $|1-\re^{\ri\psi}|\leq(2M)^{-1}$ whenever $|\psi|\leq\varphi'-\varphi$.
%%   Let $w\in\Omega_{\varphi'}$. Then $w=\re^{\ri\psi}z$ for some
%%   $z\in\Omega_\varphi$ and $|\psi|\leq\varphi'-\varphi$.
%%   From
%%   \(S-w=(I+(z-w)(S-z)^{-1})(S-z)\)
%%   and
%%   \[|z-w|\|(S-z)^{-1}\|\leq |1-\re^{\ri\psi}|M\leq\frac{1}{2}\]
%%   we conlude by a Neumann series argument that $w\in\varrho(S)$ and
%%   $\|(S-w)^{-1}\|\leq 2M/|w|$.
%% \end{proof}

The following theorem on $p$-subordinate perturbations of 
dichotomous bisectorial operators is crucial for the next sections.
Compared to \cite[Theorem~1.3]{langer-tretter} we use $p$-subordinacy rather than an estimate of type \eqref{eq:psubest} 
and we only assume that the imaginary axis belongs to the set of points of regular type of the perturbed operator, not to its resolvent set.

%In  Lemma~\ref{lem:nonnegham-gap}, we will derive a sufficient condition for
%$\ri\R\subset r(S+R)$ in the setting of Hamiltonian operator matrices.

Recall that for a linear operator $S$ on a Banach space, 
$z\in\C$ is called a
\emph{point of regular type} if there exists $c>0$ such that
\[\|(S-z)x\|\geq c\|x\|, \quad%\text{for all}\quad 
x\in\mdef(S).\]
The set $r(S)$ of all points of regular type is open and satisfies
$\varrho(S)\subset r(S)$. If
$\Omega\subset r(T)$ is a connected subset such that 
$\Omega\cap\varrho(S)\neq\varnothing$, then $\Omega\subset\varrho(S)$,
% If $S$ is closed, then $\range(S-z)$ is closed for $z\in r(S)$,
% and $\codim\range(S-z)$ is constant on each connected component of $r(S)$.
see \cite[\S 78]{akhiezer-glazman}.
%, \cite[Theorem~IV.5.17]{kato}.
The complement $\C\setminus r(S)$ is the approximate point spectrum of $S$.

\begin{theo}\label{theo:dichotpert}
  Let $S$ be a closed densely defined linear operator on a Banach space~$V$
  such that
  \begin{itemize}
  \item[{\rm (i)}] $\ri\R\subset\varrho(S)$;
  \item[{\rm (ii)}] $S$ is bisectorial with angle $\theta\in[0,\pi/2[$ and
    radius $r\geq0$;
    \vspace{-2mm}
  \item[{\rm (iii)}] the integral
    \(\displaystyle\int_{\ri\R}'(S-z)^{-1}x\,dz\)
    exists for all $x\in V$.
  \end{itemize}
  Let $R$ be $p$-subordinate to $S$ with $p<1$.  If\,
  $\ri\R\subset r(S+R)$, then $S+R$ is dichotomous with dichotomy
  gap $h>0$, the corresponding projections $P_\pm$ satisfy
  \begin{equation}\label{eq:intproj}
    \frac{1}{\pi\ri}\int_{\ri\R}'(S+R-z)^{-1}x\,dz=P_+x-P_-x, \quad x\in V,
  \end{equation}
  and $S+R$ is bisectorial with some angle $\theta''\in\,]\theta,\pi/2[$.
  Moreover, for every
  $\theta'\in\,]\theta,\pi/2[$ there exists $r'\geq r$ such that $S+R$ 
  is also bisectorial with angle $\theta'$ and radius $r'$ and
  $($see Figure~{\rm \ref{fig:dichotpert}}$)$
  \begin{align}
    &\bigset{z\in\C}{|\Real z|< h}\cup\Omega_{\theta'',0}
    \cup\Omega_{\theta',r'} \subset\varrho(S+R).
    \label{eq:resolvincl}
  \end{align}
\end{theo}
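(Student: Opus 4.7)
The plan is to verify the three hypotheses of Theorem~\ref{theo:dichot} for $S+R$; the dichotomy, the gap $h$, and the projection formula~\eqref{eq:intproj} then follow, and the bisectoriality and the inclusion~\eqref{eq:resolvincl} will be read off along the way. I would begin by invoking Lemma~\ref{lem:psubpert}\,(ii): for any chosen $\theta'\in\,]\theta,\pi/2[\,$ there is $r'\ge r$ with $\Omega_{\theta',r'}\subset\varrho(S+R)$ and $\|(S+R-z)^{-1}\|\le 2M/|z|$ there. Since $\ri\R$ is connected, meets $\Omega_{\theta',r'}$ at large $|t|$, and is by hypothesis contained in the open set $r(S+R)$, the Akhiezer-Glazman fact recalled just before the theorem forces $\ri\R\subset\varrho(S+R)$.

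Next I would build the strip for condition~(i) of Theorem~\ref{theo:dichot}. Lemma~\ref{lem:psubpert}\,(i) gives $\|R(S-z)^{-1}\|\le M'/|z|^{1-p}\to 0$ along $\ri\R$, so a Neumann-series argument enlarges $\varrho(S+R)$ to a strip $\{|\Real z|\le h_1\}\cap\{|z|\ge N\}$ with resolvent bound $2M/|z|$. On the compact arc $\{\ri t:|t|\le N\}$ the resolvent is continuous, hence by a compactness argument extends to a strip $\{|\Real z|\le h_2,\,|z|\le N\}\subset\varrho(S+R)$ with a uniform bound. Setting $h:=\min(h_1,h_2)$ provides the dichotomy gap and verifies (i), while the $1/|z|$ decay handles (ii). For condition~(iii), I would use the second resolvent identity
\[
(S+R-z)^{-1}x=(S-z)^{-1}x-(S+R-z)^{-1}R(S-z)^{-1}x,\qquad x\in V.
\]
The first term admits a principal value by hypothesis~(iii) on $S$, while the norm of the second is bounded on $\ri\R$ at infinity by $C/|z|^{2-p}$, which is absolutely integrable since $p<1$; hence (iii) holds for $S+R$.

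Theorem~\ref{theo:dichot} now delivers dichotomy with gap $h$ and~\eqref{eq:intproj}. To finish, combining the strip with the bisectorial estimate of Lemma~\ref{lem:psubpert}\,(ii) and Remark~\ref{rem:sect}\,(iv) gives bisectoriality of $S+R$ with some angle $\theta''\in\,]\theta,\pi/2[\,$ and radius $0$ (the strip of width $h$ swallows $\overline{B_{r'}(0)}$ except in two symmetric sectors which can be enclosed in $\pm\Sigma_{\theta''}$), and \eqref{eq:resolvincl} is then just the union of the three pieces already exhibited in $\varrho(S+R)$. The main obstacle I anticipate is producing the \emph{uniform} strip of positive width around the bounded portion of $\ri\R$: the $p$-subordinate decay only kicks in for large $|z|$, so near the origin one must patch the pointwise Neumann-series existence of the resolvent with continuity and a compactness argument. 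Verifying condition~(iii) is delicate only in principle; once the $1/|z|$ bound for $(S+R-z)^{-1}$ on $\ri\R$ is in hand, the exponent $2-p>1$ does all the work.
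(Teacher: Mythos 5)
Your proposal is correct and follows essentially the same route as the paper's proof: both invoke Lemma~\ref{lem:psubpert}, use connectedness of $\ri\R$ in $r(S+R)$ to upgrade to $\ri\R\subset\varrho(S+R)$, obtain the strip and the bisector $\Omega_{\theta'',0}$, and verify condition~(iii) of Theorem~\ref{theo:dichot} via the second resolvent identity with the $|z|^{-(2-p)}$ decay. You merely spell out the compactness/patching argument for the strip near the origin that the paper compresses into one sentence ("$(S+R-z)^{-1}$ is uniformly bounded on compact subsets").
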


\begin{figure}[h]
  \begin{center}
    \includegraphics{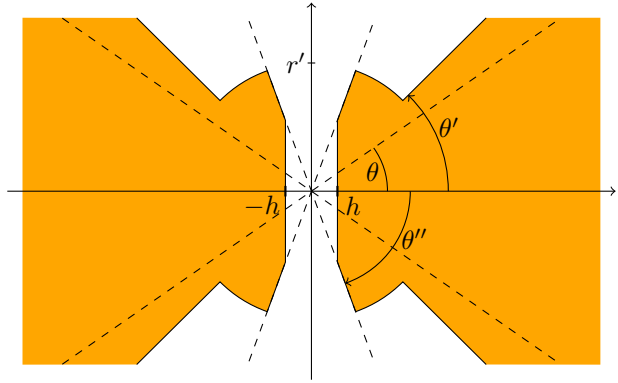}
  \end{center}
  \caption{Region containing $\sigma(S+R)$ in 
    Theorem~\ref{theo:dichotpert}.}
  \label{fig:dichotpert}
\end{figure}

\begin{proof}
  Lemma~\ref{lem:psubpert} implies the bisectoriality
  with angle $\theta'$ and radius $r'$.
  In particular, the connected subset $\ri\R$ of $r(S+R)$ contains 
  points from $\varrho(S+R)$ and thus
  $\ri\R\subset\varrho(S+R)$.
  Since $\varrho(S+R)$ is open and $(S+R-z)^{-1}$ is uniformly bounded
  on compact subsets,
  there exist $h>0$, $\theta''\in\,]\theta,\pi/2[$ such that
  $S+R$ is bisectorial with angle $\theta''$ and 
  \eqref{eq:resolvincl} holds.
  Consequently, $S+R$ satisfies the assumptions (i) and (ii) in
  Theorem~\ref{theo:dichot}.
  Furthermore, \eqref{eq:psubest} and the estimate \eqref{eq:bisectest}
  for $S+R$ imply that
  \[\int_{\ri\R}(S+R-z)^{-1}R(S-z)^{-1}\,dz\]
  exists in the uniform operator topology.
  From  the resolvent identity
  \[(S+R-z)^{-1}=(S-z)^{-1}-(S+R-z)^{-1}R(S-z)^{-1},
    \quad z\in \ri\R,\]
  we conclude that $S+R$ also satisfies assumption (iii)  in 
  Theorem~\ref{theo:dichot}.
\end{proof}

In view of Lemma~\ref{lem:sectdichot},
the previous result immediately applies to sectorially dichotomous operators.

\begin{coroll}\label{coroll:sectdichotpert}
  Let $S$ be sectorially dichotomous with angle $\theta\in[0,\pi/2[$.
  Let $R$ be $p$-subor\-di\-nate to $S$ with $p<1$ and
  $\ri\R\subset r(S+R)$.
  Then $S+R$ is dichotomous and all assertions of
  Theorem~{\rm\ref{theo:dichotpert}}  hold.
\end{coroll}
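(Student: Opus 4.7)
The plan is simply to verify that the hypotheses of Theorem~\ref{theo:dichotpert} are all met by a sectorially dichotomous operator $S$, so that the conclusion of that theorem applies verbatim once we add the $p$-subordinate perturbation $R$ and the assumption $\ri\R\subset r(S+R)$.

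First, I would invoke Lemma~\ref{lem:sectdichot}(ii) to conclude that $S$ is bisectorial with angle $\theta\in[0,\pi/2[$; taking radius $r=0$ this gives hypothesis~(ii) of Theorem~\ref{theo:dichotpert}. Next, Lemma~\ref{lem:sectdichot}(i) yields $\sigma(S)\subset\{z:|\Real z|\ge h_0\}$ where $h_0>0$ is the dichotomy gap, and in particular $\ri\R\subset\varrho(S)$, which is hypothesis~(i). Finally, Lemma~\ref{lem:sectdichot}(iii) asserts that the Cauchy principal value $\int_{\ri\R}'(S-z)^{-1}x\,dz$ exists for every $x\in V$ (it equals $\ri\pi(P_+x-P_-x)$), which is exactly hypothesis~(iii).

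With all three hypotheses verified, Theorem~\ref{theo:dichotpert} applied to $S$ and $R$, together with the standing assumption $\ri\R\subset r(S+R)$, gives that $S+R$ is dichotomous with some dichotomy gap $h>0$, the projections $P_\pm$ are represented by the resolvent integral \eqref{eq:intproj}, and $S+R$ is bisectorial with angle $\theta''\in\,]\theta,\pi/2[$ as well as, for every $\theta'\in\,]\theta,\pi/2[$, bisectorial with angle $\theta'$ and some radius $r'\ge0$, with the resolvent inclusion \eqref{eq:resolvincl}. This is precisely the assertion of the corollary.

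There is no real obstacle here: the corollary is essentially a packaging of Theorem~\ref{theo:dichotpert} in the important special case where the unperturbed operator is sectorially dichotomous. The only care needed is to point to the correct parts of Lemma~\ref{lem:sectdichot} for the three hypotheses (i)--(iii), after which the proof is a direct quotation of Theorem~\ref{theo:dichotpert}.
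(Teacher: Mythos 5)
Your proof is correct and follows exactly the paper's own route: the paper proves this corollary in a single sentence by invoking Lemma~\ref{lem:sectdichot} to verify that a sectorially dichotomous operator satisfies hypotheses (i)--(iii) of Theorem~\ref{theo:dichotpert}, just as you do. Your version merely spells out which part of the lemma supplies which hypothesis, which is a sound elaboration of the same argument.
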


%%%%%%%%%%%%%%%%%%%%%%%%%%%%%%%%%%

\section{Dichotomous Hamiltonians}
\label{sec:ham}

Hamiltonian operator matrices are block operator matrices of a particular form. Block operator matrices can be classified according to the domains of their entries into diagonally dominant, off-diagonally dominant, and top dominant, see \cite{tretter2,MR2514061,tretter-book}. Here we introduce the new class of diagonally $p$-dominant block operator matrices. 

\begin{definition}
Let $H_1$, $H_2$ be Hilbert spaces, consider densely defined linear operators $A$ in $H_1$, $B$ from $H_2$ to $H_1$, $C$ from $H_1$ to $H_2$, and $D$ in $H_2$, and let $p\in[0,1]$. Then the block operator matrix
\[  
  T=\pmat{A&B\\C&D} \quad \text{in} \ H_1 \times H_2
\]  
is called 
  \begin{itemize}
  \item[(i)\!] \hspace*{-0.5mm}\emph{diagonally dominant} if
    $C$ is $A$-bounded and $B$ is $D$-bounded;
%    $\mdef(A)\subset\mdef(C)$ and $\mdef(D)\subset\mdef(B)$; 
  \item[(ii)\!] \hspace*{-0.5mm}\emph{diagonally $p$-dominant} if $C$ is $p$-subordinate
    to $A$ and $B$ is $p$-subordinate to~$D$.
\end{itemize}
\end{definition}

Note that for a diagonally dominant block operator matrix the domain of $T$ is given by the domains of the two diagonal entries,
$\mdef(T)=\mdef(A)\times \mdef(D)$. By Lemma~\ref{lem:psub} (ii), every diagonally $p$-dominant block operator matrix is diagonally dominant.

If we decompose a block operator matrix $T$ into its diagonal and off-diagonal~part,
\begin{equation}\label{eq:blockdecomp}
  T=S+R \quad\text{with}\quad
  S:=\pmat{A&0\\0&D},\quad R:=\pmat{0&B\\C&0},
\end{equation}
then $T$ is diagonally dominant if and only if $R$ is $S$-bounded, see \cite[\S2.2]{tretter-book}. A similar statement holds for diagonal $p$-dominance:

\begin{lemma}\label{lem:diagpdom}
  \begin{itemize}
  \item[{\rm (i)}]
   A block operator matrix $T$ is diagonally $p$-dominant if and only if
    $R$ is $p$-subordinate to $S$.
  \item[{\rm (ii)}]
    If\, $0\in\varrho(A)\cap\varrho(D)$,
    $C$ is $p_1$-subordinate to $A$, and $B$ is $p_2$-subordinate
    to $D$, then  $T$ is diagonally $p$-dominant with $p=\max\{p_1,p_2\}$.
  \end{itemize}
\end{lemma}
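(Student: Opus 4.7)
The plan is to reduce everything to the explicit action of $S$ and $R$ on $\mdef(S)=\mdef(A)\times\mdef(D)$. For a vector $x=(x_1,x_2)\in\mdef(A)\times\mdef(D)$ one has $Sx=(Ax_1,Dx_2)$ and $Rx=(Bx_2,Cx_1)$, so that, using the standard product norm,
\[
  \|x\|^2=\|x_1\|^2+\|x_2\|^2,\qquad
  \|Sx\|^2=\|Ax_1\|^2+\|Dx_2\|^2,\qquad
  \|Rx\|^2=\|Cx_1\|^2+\|Bx_2\|^2.
\]
With these identities the statement (i) becomes a purely scalar affair.

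For the nontrivial implication ``$\Rightarrow$'' in (i), I would assume $C$ is $p$-subordinate to $A$ with bound $c_1$ and $B$ is $p$-subordinate to $D$ with bound $c_2$, square both inequalities, add them, and estimate the right-hand side by Hölder's inequality in the form
\[
  a^{1-p}b^p+c^{1-p}d^p\leq (a+c)^{1-p}(b+d)^p,\qquad a,b,c,d\geq 0,
\]
applied with $a=\|x_1\|^2$, $b=\|Ax_1\|^2$, $c=\|x_2\|^2$, $d=\|Dx_2\|^2$. This yields $\|Rx\|\leq\max\{c_1,c_2\}\,\|x\|^{1-p}\|Sx\|^p$, so $R$ is $p$-subordinate to $S$. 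For the converse ``$\Leftarrow$'', I would just plug test vectors $(x_1,0)$ and $(0,x_2)$ into the $p$-subordination inequality for $R$: the first recovers the estimate for $C$ with the same constant, the second the estimate for $B$.

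For part (ii), the hypothesis $0\in\varrho(A)\cap\varrho(D)$ lets me apply Lemma~\ref{lem:psub}~(iii) to promote both subordination orders to the common value $p=\max\{p_1,p_2\}$: $C$ becomes $p$-subordinate to $A$ and $B$ becomes $p$-subordinate to $D$. The conclusion then follows directly from the definition of diagonal $p$-dominance (or, equivalently, from part (i) applied to the resulting subordination data).

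There is no real obstacle here; the only point that takes a moment of care is the combinatorial inequality in the proof of (i)~``$\Rightarrow$'', which is just the concavity of $t\mapsto t^p$ on $\R_+$ (equivalently, Hölder with conjugate exponents $1/(1-p)$ and $1/p$). Everything else is bookkeeping with the product-space norms and an appeal to Lemma~\ref{lem:psub}~(iii).
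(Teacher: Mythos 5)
Your proof is correct and follows essentially the same route as the paper: same product-norm identities, same H\"older inequality $a^{1-p}b^p+c^{1-p}d^p\leq (a+c)^{1-p}(b+d)^p$ (applied after pulling out $\max\{c_1^2,c_2^2\}$) for the forward direction, the same test vectors $(u,0)$ and $(0,v)$ for the converse, and the same appeal to Lemma~\ref{lem:psub}~(iii) for part (ii).
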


\begin{proof}
  (i) If $T$ is diagonally $p$-dominant, then H\"older's inequality yields that, for $x=(u,v)\in\mdef(T)=\mdef(S)$,
  \begin{align*}
    \|Rx\|^2&=\|Bv\|^2+\|Cu\|^2\leq c_B^2\|v\|^{2(1-p)}
    \|Dv\|^{2p}+c_C^2\|u\|^{2(1-p)}\|Au\|^{2p}\\
    &\leq \max\{c_B^2,c_C^2\}\left(\|u\|^2+\|v\|^2\right)^{1-p}\left(\|Au\|^2
    +\|Dv\|^2\right)^p\\
    &=\max\{c_B^2,c_C^2\}\|x\|^{2(1-p)}\|Sx\|^{2p}.
  \end{align*}
  Hence $R$ is $p$-subordinate to $S$.
  Vice versa, let $R$ be $p$-subordinate to $S$. Then 
  for $u\in\mdef(A)$ we have $x:=(u,0)\in\mdef(S)\subset\mdef(R)$,
  i.e.\ $u\in\mdef(C)$, and
  \[\|Cu\|=\|Rx\|\leq c\|x\|^{1-p}\|Sx\|^p=c\|u\|^{1-p}\|Au\|^p.\]
  An analogous argument yields that $B$ is $p$-subordinate to $D$.
  
  (ii) is an immediate consequence of Lemma~\ref{lem:psub}~(iii).
\end{proof}

\begin{definition}
A block operator matrix $T$ is called \emph{Hamiltonian} if $H_1=H_2=H$ and $T$ has the~form
\begin{equation}\label{eq:ham}
    T=\pmat{A&B\\C&-A^*} \quad \text{in} \ H \times H
%    \quad\mdef(T)=\mdef(A)\times\mdef(A^*),
  \end{equation}
with $A$ closed and $B$, $C$ symmetric; %. A Hamiltonian 
$T$ is called
%\begin{itemize}
%\item[(iii)] 
\emph{nonnegative} if $B$, $C$ are nonnegative.
%\end{itemize}
\end{definition}

\begin{remark}
  A Hamiltonian $T$ is diagonally dominant if and only if 
  $\mdef(A)\subset\mdef(C)$ and $\mdef(A^*)\subset\mdef(B)$,
  see \cite[Remark~2.2.2]{tretter-book}.
\end{remark}

\begin{lemma}\label{lem:nonnegham-gap}
  Let $T$ be a nonnegative diagonally dominant Hamiltonian operator matrix
  such that
  $\ri\R\subset\varrho(A)$.
  Then $\ri\R\subset r(T)$.
\end{lemma}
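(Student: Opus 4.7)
The plan is to show directly that for every $t\in\R$ the operator $T-\ri t$ is bounded below on $\mdef(T)=\mdef(A)\times\mdef(A^*)$, arguing by contradiction and exploiting the nonnegativity of $B$ and $C$ through an indefinite inner-product identity (essentially the one associated with $J=\pmat{0&I\\I&0}$). The nontrivial step will be to convert the weak information $(Bv_n|v_n),(Cu_n|u_n)\to0$, which this identity yields, into strong convergence $u_n,v_n\to0$ in $H$; I will do this via Cauchy--Schwarz for the nonnegative sesquilinear forms $(Bu|v)$ and $(Cu|v)$, controlled by the $A^*$- and $A$-boundedness of $B$ and $C$ that come from diagonal dominance.

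For $x=(u,v)\in\mdef(T)$, write $(T-\ri t)x=(f,g)$ so that $f=(A-\ri t)u+Bv$ and $g=Cu-(A^*+\ri t)v$. Pairing yields $(f|v)=((A-\ri t)u|v)+(Bv|v)$ and $(g|u)=(Cu|u)-((A^*+\ri t)v|u)$. Since $((A^*+\ri t)v|u)=(v|(A-\ri t)u)=\overline{((A-\ri t)u|v)}$, the mixed terms combine to $2\ri\Imag((A-\ri t)u|v)$, and taking real parts gives the key identity
\[
  (Bv|v)+(Cu|u)=\Real\bigl((f|v)+(g|u)\bigr)\leq\|(T-\ri t)x\|\,\|x\|.
\]

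Suppose for contradiction that $T-\ri t$ is not bounded below and pick $x_n=(u_n,v_n)\in\mdef(T)$ with $\|x_n\|=1$ and $(T-\ri t)x_n=(f_n,g_n)\to 0$. The identity above forces $(Bv_n|v_n)\to 0$ and $(Cu_n|u_n)\to 0$. By the adjoint relation $(A-\ri t)^*=A^*+\ri t$, $\ri t\in\varrho(A)$ also makes $(A^*+\ri t)^{-1}$ a bounded operator; inverting the first row of $(T-\ri t)x_n=(f_n,g_n)$ and taking the inner product with $u_n$, together with $((A-\ri t)^{-1})^*=(A^*+\ri t)^{-1}$, produces
\[
  \|u_n\|^2=((A-\ri t)^{-1}f_n|u_n)-(Bv_n|w_n), \qquad w_n:=(A^*+\ri t)^{-1}u_n.
\]
The first term clearly tends to $0$. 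For the second, diagonal dominance is essential: $w_n\in\mdef(A^*)\subset\mdef(B)$, and $A^*$-boundedness of $B$ makes $B(A^*+\ri t)^{-1}$ a bounded operator on $H$, so both $\|w_n\|$ and $\|Bw_n\|$ stay uniformly bounded; Cauchy--Schwarz for the nonnegative form then yields $|(Bv_n|w_n)|^2\leq(Bv_n|v_n)(Bw_n|w_n)\to 0$. Thus $\|u_n\|\to 0$, and the symmetric argument applied to the second row (using $(Cu_n|u_n)\to 0$ and $A$-boundedness of $C$) gives $\|v_n\|\to 0$, contradicting $\|x_n\|=1$.

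The main obstacle, and the place where the hypotheses really enter, is precisely the uniform estimate of $(Bw_n|w_n)$: without $A^*$-boundedness of $B$ one loses control of $\|Bw_n\|$ and the Cauchy--Schwarz step collapses. Everything else --- the key identity, the adjoint bookkeeping $(A-\ri t)^*=A^*+\ri t$ and $((A-\ri t)^{-1})^*=(A^*+\ri t)^{-1}$, and the parallel treatment of $v_n$ --- is routine.
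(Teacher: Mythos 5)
Your proof is correct and follows essentially the same route as the paper: establish the key identity $(Bv_n|v_n)+(Cu_n|u_n)\to0$ from the nonnegativity of $B$, $C$, then bootstrap to strong convergence of $u_n$ and $v_n$ by pairing with resolvent images of the diagonal components. The only technical difference is that the paper first extends $B$, $C$ to selfadjoint operators and works with $C^{1/2}(A-\ri t)^{-1}$, $B^{1/2}(A^*+\ri t)^{-1}$ (bounded by the closed graph theorem), whereas you invoke the Cauchy--Schwarz inequality for the nonnegative sesquilinear form $(Bu|v)$ directly and bound $(Bw_n|w_n)$ via relative boundedness, thereby sidestepping the selfadjoint extension and the square roots.
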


\begin{proof}
  Since $B,C$ are nonnegative symmetric, they admit nonnegative selfadjoint
  extensions. We may thus assume that $B$ and $C$ are selfadjoint.
  Then, for $t\in\R$, the operator $C^{1/2}(A-\ri t)^{-1}$ is defined
  on $H$ and closed; hence it is bounded by the closed graph theorem.
  Analogously, $B^{1/2}(A^*+\ri t)^{-1}$ is bounded.
  Suppose that $\ri t\not\in r(T)$. Then there exist
  $(u_n,v_n)\in\mdef(T)$ such that 
  \[
    \left\|\pmat{u_n\\v_n}\right\|=1, \ n\in \N, \qquad
    \lim_{n\to\infty}(T-\ri t)\pmat{u_n\\v_n}=
    \lim_{n\to\infty}\pmat{(A-\ri t)u_n+Bv_n\\Cu_n-(A^*+\ri t)v_n}=0.
  \]
  In view of $\|u_n\|\leq 1$, $\|v_n\|\le 1$, $n\in\N$, 
  the latter implies that
  \[\begin{aligned}
      &((A-\ri t)u_n|v_n)+(Bv_n|v_n)\ \to 0, \\
      &(Cu_n|u_n)-((A^*\!\!+\ri t)v_n|u_n)\to 0,
    \end{aligned}
    \qquad n\to\infty.\]
  Adding these two relations and taking the real part, we arrive at
  \[(Cu_n|u_n)+(Bv_n|v_n)\to 0, \qquad n\to\infty.\]
  Since $B,C$ are nonnegative, we obtain
  \[\|C^{1/2}u_n\|^2=(Cu_n|u_n)\to 0, \quad
    \|B^{1/2}v_n\|^2=(Bv_n|v_n)\to 0, \qquad n\to\infty.\]
  Because of $\|v_n\|\le 1$, $n\in\N$, the sequences $((A-\ri t)^{-1}v_n)_n$ and 
  $(C^{1/2}(A-\ri t)^{-1}v_n)_n$ are bounded and thus
  \begin{align*}
    0&=\lim_{n\to\infty}\bigl(Cu_n-(A^*+\ri t)v_n\big|(A-\ri t)^{-1}v_n\bigr)\\
    &=\lim_{n\to\infty}\Bigl(\bigl(C^{1/2}u_n\big|C^{1/2}(A-\ri t)^{-1}v_n\bigr)
      -\|v_n\|^2\Bigr)=-\lim_{n\to\infty}\|v_n\|^2,
  \end{align*}
  i.e.\ $v_n\to 0$, $ n\to \infty$. Analogous considerations for
  $((A-\ri t)u_n+Bv_n|(A^*+\ri t)^{-1}u_n)$ yield that $u_n\to 0$, $n\to\infty$,
  a contradiction to $\|(u_n,v_n)\|=1$, $n\in\N$.
\end{proof}

The following theorem is a perturbation result for Hamiltonians $T$ with sectorially dichotomous $A$; the corresponding spectral enclosure is displayed in Figure~\ref{fig:dichotpert}. 

\begin{theo}\label{theo:dichotham}
  Let $T$ be a nonnegative diagonally $p$-dominant Hamiltonian
  with $p<1$ and let $A$ be sectorially dichotomous with angle
  $\theta\in[0,\pi/2[$.
  Then $T$ is dichotomous, the spectral projections 
  $P_+$, $P_-$ satisfy
  \[\frac{1}{\pi \ri}\int_{\ri\R}^\prime(T-z)^{-1}x\,dz=P_+x-P_-x,
    \quad x\in H\times H,\]
  and there exist $h>0$, $\theta''\in\,]\theta,\pi/2[$ 
  and for every $\theta'\in\,]\theta,\pi/2[$ an $r'>0$ such that 
  \[\bigset{z\in\C}{|\Real z|<h}\cup\Omega_{\theta'',0}
    \cup\Omega_{\theta',r'}\subset\varrho(T).\]
\end{theo}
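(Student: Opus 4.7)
The plan is to reduce the theorem to Corollary~\ref{coroll:sectdichotpert} by decomposing $T$ into its diagonal and off-diagonal parts $T=S+R$ as in \eqref{eq:blockdecomp}, with $S=\mathrm{diag}(A,-A^*)$ and $R$ the off-diagonal operator matrix containing $B$ and $C$. Once we verify the hypotheses of that corollary, the dichotomy, the resolvent integral representation of $P_\pm$, and the spectral inclusion all follow directly, since the conclusion of Corollary~\ref{coroll:sectdichotpert} matches the three claims of the present theorem verbatim.

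First I would show that $S=\mathrm{diag}(A,-A^*)$ is sectorially dichotomous with angle $\theta$. By Corollary~\ref{coroll:adjsectdichot}, $A^*$ is sectorially dichotomous with angle $\theta$, and reversing the sign of an operator merely reflects its spectrum and its spectral subspaces, so $-A^*$ is sectorially dichotomous with angle $\theta$ as well. If $H=H_+^A\oplus H_-^A$ and $H=K_+\oplus K_-$ are the spectral decompositions corresponding to $A$ and $-A^*$ respectively, then $S$ decomposes with respect to $(H_+^A\times K_+)\oplus(H_-^A\times K_-)$, and its positive/negative restrictions are direct sums of two sectorial operators of angle $\theta$; by Remark~\ref{rem:sect}(v) (or directly from Definition~\ref{def:sect}) these direct sums are themselves sectorial with angle $\theta$, so $S$ is sectorially dichotomous with angle $\theta$.

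Next, $R$ is $p$-subordinate to $S$ by Lemma~\ref{lem:diagpdom}(i), since $T$ is diagonally $p$-dominant. To apply Corollary~\ref{coroll:sectdichotpert} it only remains to check $\ri\R\subset r(S+R)=r(T)$. Because $p<1$, diagonal $p$-dominance implies diagonal dominance by Lemma~\ref{lem:psub}(ii), so $T$ is a nonnegative diagonally dominant Hamiltonian. Moreover $A$ is sectorially dichotomous, hence $\ri\R\subset\varrho(A)$ by Lemma~\ref{lem:sectdichot}(i). Lemma~\ref{lem:nonnegham-gap} then yields $\ri\R\subset r(T)$.

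With these three inputs in place---$S$ sectorially dichotomous with angle $\theta$, $R$ $p$-subordinate to $S$ with $p<1$, and $\ri\R\subset r(S+R)$---Corollary~\ref{coroll:sectdichotpert} delivers precisely the dichotomy of $T=S+R$, the integral formula \[\frac{1}{\pi\ri}\int_{\ri\R}^\prime(T-z)^{-1}x\,dz=P_+x-P_-x,\] and via the resolvent inclusion \eqref{eq:resolvincl} of Theorem~\ref{theo:dichotpert} the existence of $h>0$, $\theta''\in\,]\theta,\pi/2[$ and, for each $\theta'\in\,]\theta,\pi/2[$, of an $r'>0$ giving the required enclosure of $\varrho(T)$. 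The only nonroutine step is the verification that $S$ is sectorially dichotomous with the same angle $\theta$; the potential subtlety lies in the sectoriality of the direct sums $S_+$ and $-S_-$, which could be affected by the (non-orthogonal) norms of the spectral projections of $A$ and $A^*$, but these enter only as a constant in the resolvent estimate \eqref{eq:sect-resolv} exactly as in the proof of Lemma~\ref{lem:sectdichot}(ii) and so do not affect the angle.
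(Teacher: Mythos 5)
Your proposal is correct and takes essentially the same approach as the paper: decompose $T=S+R$ into diagonal and off-diagonal parts, verify via Corollary~\ref{coroll:adjsectdichot} that $S$ is sectorially dichotomous, note that $R$ is $p$-subordinate to $S$ and $\ri\R\subset r(T)$ by Lemma~\ref{lem:nonnegham-gap}, then apply Corollary~\ref{coroll:sectdichotpert}. You merely supply a more explicit verification that $S$ inherits sectorial dichotomy (with the same angle $\theta$), a step the paper states without detail.
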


\begin{proof}
  We consider the decomposition $T=S+R$ into diagonal and off-diagonal~part, 
  \begin{equation}\label{eq:hamdecomp}
  T=S+R \quad\text{with}\quad
  S:=\pmat{A&0\\0&-A^*},\quad R:=\pmat{0&B\\C&0}.
\end{equation}
  Since $A$ and hence $A^*$ are sectorially dichotomous, see 
  Corollary~\ref{coroll:adjsectdichot},
  the same is true for $S$. Moreover $R$ is $p$-subordinate to $S$
  and $\ri\R\subset r(T)$.
  Thus Corollary~\ref{coroll:sectdichotpert} applies and yields all claims.
\end{proof}

A Hamiltonian $T$ as in \eqref{eq:ham} does not have any symmetry properties with respect to the scalar product in 
the Hilbert space $H\times H$. However, it exhibits some symmetries with respect to two different indefinite inner products on $H\times H$, see \cite[Section~5]{langer-ran-rotten}.

A vector space $V$ together with an indefinite inner product
$\braket{\cdot}{\cdot}$ is called a \emph{Krein space}
if there exists a  scalar product $(\cdot|\cdot)$ on $V$
and an involution $J:V\to V$ such that $(V,(\cdot|\cdot))$
is a Hilbert space and
\[\braket{x}{y}=(Jx|y), \quad%\text{for all}\quad 
x,y\in V.\]
The so-called fundamental symmetry $J$ and the scalar product are not unique,
but the norms induced by two such scalar products are equivalent.

A subspace $U\subset V$ is called \emph{$J$-nonnegative} (\emph{$J$-nonpositive}, \emph{$J$-neutral}, respectively,)  
 if $\braket{x}{x}\geq 0$ ($\leq 0$, $=0$, respectively) for all $x\in U$.
A subspace $U$ is $J$-neutral if and only if 
it is contained in its $J$-orthogonal complement $U^\Jorth$, 
\[U \subset U^\Jorth :=\{x\in V\,|\,\braket{x}{y}=0 \text{ for all } y\in U\},\]
and it is called \emph{hypermaximal $J$-neutral} if $U=U^\Jorth$.

A linear operator $T$ on $V$ is called \emph{$J$-accretive} if 
$\Real\braket{Tx}{x}\geq 0$ for all $x\in\mdef(T)$.
A densely defined linear operator $T$ is called \emph{$J$-skew-symmetric} if
$\braket{Tx}{y}=-\braket{x}{Ty}$
for all $x,y\in\mdef(T)$. 
For more results on Krein spaces
and operators therein, we refer to \cite{azizov-iokhvidov, krein65}.

\begin{prop}
  Let $V$ be a Krein space with fundamental symmetry $J$ and let $\,T$ be a dichotomous operator on $V$
  with corresponding decomposition $V=V_+\oplus V_-$ and 
  projections $P_+$, $P_-$ such that
  \[\frac{1}{\pi \ri}\int_{\ri\R}^\prime(T-z)^{-1}x\,dz=P_+x-P_-x,
    %\quad\text{for all}
    \quad x\in V.\]
  \begin{itemize}
  \item[{\rm (i)}]
  If\, $T$ is $J$-accretive, then $V_+$ is $J$-nonnegative and $V_-$ is
  $J$-nonpositive.
  \item[{\rm (ii)}]
  If\, $T$ is $J$-skew-symmetric, then $V_+$ and $\,V_-$ are hypermaximal
  $J$-neutral.
  \end{itemize}
\end{prop}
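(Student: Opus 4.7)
The idea is to lift the algebraic hypothesis on $T$ through the assumed resolvent integral representation of $P_+-P_-$, and then conclude on $V_\pm$ via $(P_+-P_-)|_{V_\pm}=\pm\id$.

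For (i), fix $x\in V$ and, for $t\in\R$ with $\ri t\in\varrho(T)$, set $u_t:=(T-\ri t)^{-1}x\in\mdef(T)$. Using $Tu_t=x+\ri t\,u_t$, conjugate linearity of $\braket{\cdot}{\cdot}$ in the second slot, and $\braket{u_t}{u_t}\in\R$, we obtain
\[
  \Real\braket{(T-\ri t)^{-1}x}{x}=\Real\braket{u_t}{Tu_t}=\Real\braket{Tu_t}{u_t}\geq 0
\]
by $J$-accretivity of $T$. Pulling the continuous linear functional $\braket{\cdot}{x}$ through the principal-value integral and taking real parts yields $\Real\braket{(P_+-P_-)x}{x}\geq 0$. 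For $x\in V_+$ the left side equals the real number $\braket{x}{x}$, proving $J$-nonnegativity of $V_+$; for $x\in V_-$ it equals $-\braket{x}{x}$, proving $J$-nonpositivity of $V_-$.

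For (ii), the strategy is to show that $P_+-P_-$ is itself $J$-skew-symmetric, deduce $P_\pm^{\Jadj}=P_\mp$, and then read off hypermaximal $J$-neutrality. We first establish the reflection identity
\[
  \braket{(T-z)^{-1}x}{y}=-\braket{x}{(T-z)^{-1}y},\qquad z\in\ri\R,\ x,y\in V,
\]
by setting $u=(T-z)^{-1}x$, $v=(T-z)^{-1}y$, expanding $\braket{u}{y}=\braket{u}{Tv}-\bar z\braket{u}{v}$ and $\braket{u}{Tv}=-\braket{Tu}{v}=-\braket{x}{v}-z\braket{u}{v}$ via $J$-skew-symmetry of $T$, and cancelling $(z+\bar z)\braket{u}{v}=0$ on $\ri\R$. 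Integrating along $\ri\R$ and pulling $\braket{\cdot}{y}$ (linear) and $\braket{x}{\cdot}$ (conjugate linear) through the integral then gives $\braket{(P_+-P_-)x}{y}=-\braket{x}{(P_+-P_-)y}$, i.e.\ $(P_+-P_-)^{\Jadj}=-(P_+-P_-)$. Combined with $(P_++P_-)^{\Jadj}=\id^{\Jadj}=P_++P_-$, this yields $P_\pm^{\Jadj}=P_\mp$. Using $\braket{v}{Au}=\braket{A^{\Jadj}v}{u}$ and the non-degeneracy of the Krein inner product, we then deduce
\[
  v\in V_+^{\Jorth} \iff \braket{v}{P_+u}=0\ \forall\,u\in V \iff \braket{P_-v}{u}=0\ \forall\,u \iff P_-v=0 \iff v\in V_+,
\]
so $V_+^{\Jorth}=V_+$; symmetrically $V_-^{\Jorth}=V_-$.

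The main delicate point is the sign arithmetic in (ii) when interchanging the conjugate-linear slot of $\braket{\cdot}{\cdot}$ with the complex differential $dz=\ri\,dt$: the identity $\overline{\ri}=-\ri$ produces a compensating sign which, together with the prefactor $1/(\pi\ri)$, is precisely what makes $P_+-P_-$ come out $J$-skew-symmetric rather than $J$-selfadjoint. Part (i) avoids this difficulty because the purely imaginary contribution $\ri t\,\braket{u_t}{u_t}$ separates off cleanly and the remaining computation involves only real quantities.
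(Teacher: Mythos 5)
Your proof of part (i) is essentially the paper's: both reduce $\Real\braket{(T-\ri t)^{-1}x}{x}$ to $\Real\braket{T(T-\ri t)^{-1}x}{(T-\ri t)^{-1}x}\ge 0$ and integrate.

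For part (ii) you take a genuinely different route. The paper applies (i) to $T$ and $-T$ to obtain that $V_\pm$ are $J$-neutral, and then establishes hypermaximality by a short decomposition argument: for $x\in V_+^\Jorth$ write $x=u+v$ with $u\in V_+$, $v\in V_-$; if $v\neq0$, pick $y$ with $\braket{v}{y}\neq0$, reduce to $y\in V_+$ by neutrality of $V_-$, and use neutrality of $V_+$ to get $\braket{x}{y}=\braket{v}{y}\neq0$, contradiction. You instead push the $J$-skew-symmetry of $T$ through the resolvent integral to get the reflection identity $\braket{(T-z)^{-1}x}{y}=-\braket{x}{(T-z)^{-1}y}$ on $\ri\R$, deduce that $P_+-P_-$ is $J$-skew-symmetric, combine with $(P_++P_-)^\Jadj=\id$ to get $P_\pm^\Jadj=P_\mp$, and read off $V_\pm^\Jorth=\ker P_\mp=V_\pm$. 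Both are correct; yours is more computational but yields the sharper structural fact $P_\pm^\Jadj = P_\mp$ en route (the paper's argument, by contrast, only uses the integral representation through (i) and is otherwise elementary Krein-space reasoning). One small remark on your closing comment: once one parametrises the principal value integral as $\frac{1}{\pi}\int_\R'(\cdot)\,dt$, the prefactor is real and no $\overline{\ri}$ bookkeeping is needed; the $J$-skew-symmetry of $P_+-P_-$ comes entirely from the minus sign in the reflection identity.
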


\begin{proof}
  (i) The simple proof was given in  \cite[Theorem~1.4]{langer-tretter}; 
  e.g.\ for $x\in V_+$ it is nothing but the inequality
  \begin{align*}
    \braket{x}{x}&=\Real\,\braket{P_+x-P_-x}{x}
    =\frac{1}{\pi}\int_\R^\prime \Real\,\braket{(T-\ri t)^{-1}x}{x}\,dt\\
    &=\frac{1}{\pi}\int_\R^\prime
    \Real\,\braket{T(T-\ri t)^{-1}x}{(T-\ri t)^{-1}x}\,dt
    \geq 0.
  \end{align*}
  %similarly, one can shown that $V_-$ is nonpositive.

  (ii) If $T$ is $J$-skew-symmetric, then both $T$ and $-T$ are $J$-accretive.
  Consequently, $V_\pm$ are both nonnegative and nonpositive, thus neutral.
  To prove hypermaximal neutrality, let e.g.\ $x\in V_+^\Jorth$.
  Using the decomposition $V=V_+\oplus V_-$,
  we write $x=u+v$ with $u\in V_+$, $v\in V_-$.
  If $v\neq0$, then there exists $y\in V$ such that
  $\braket{v}{y}\neq0$ (e.g.\ one may choose $y=Jv$). Since $V_-$ is neutral,
  we may assume that $y\in V_+$. The neutrality of $V_+$ then implies
  that $\braket{x}{y}=\braket{v}{y}\neq0$, in contradiction to
  $x\in V_+^\Jorth$. Therefore $v=0$, i.e.\ $x\in V_+$.
\end{proof}

Following \cite[Section~5]{langer-ran-rotten}, we equip the product space $H\times H$ with two different indefinite inner products, given by
$\braket{x}{y}_1:=(J_1x|y)$ and $[x|y]_2:=(J_2x|y)$
with the fundamental symmetries
\begin{equation}
\label{fundsymm}
  J_1:=\pmat{0&-\ri I\\\ri I&0},\qquad J_2:=\pmat{0&I\\I&0}.
\end{equation}
As in the case of bounded $B$ and $C$, the Hamiltonian has the following symmetry properties with respect to 
$J_1$ and $J_2$.

\begin{lemma}
  The  Hamiltonian operator matrix $T$ is $J_1$-skew-symmetric, and
  $T$ is nonnegative if and only if it is $J_2$-accretive.
\end{lemma}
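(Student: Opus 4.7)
The plan is to reduce both assertions to direct computations using the $2\times 2$ block forms of $T$, $J_1$, $J_2$ together with (a) symmetry of $B$ and $C$, and (b) the defining relation $(Au|v) = (u|A^*v)$ for $u \in \mdef(A)$, $v \in \mdef(A^*)$.

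For the $J_1$-skew-symmetry, I would take $x = (u_1, u_2)$, $y = (v_1, v_2) \in \mdef(T) = \mdef(A) \times \mdef(A^*)$ and expand both $\braket{Tx}{y}_1 = (J_1 Tx \,|\, y)$ and $\braket{x}{Ty}_1 = (J_1 x \,|\, Ty)$ by carrying out the block multiplications. The first sum produces
\[
-\mathrm{i}(Cu_1|v_1) + \mathrm{i}(A^*u_2|v_1) + \mathrm{i}(Au_1|v_2) + \mathrm{i}(Bu_2|v_2),
\]
while the second produces
\[
-\mathrm{i}(u_2|Av_1) - \mathrm{i}(u_2|Bv_2) + \mathrm{i}(u_1|Cv_1) - \mathrm{i}(u_1|A^*v_2).
\]
The symmetry of $B$ and $C$ gives $(Bu_2|v_2) = (u_2|Bv_2)$ and $(Cu_1|v_1) = (u_1|Cv_1)$; the defining property of the adjoint gives $(Au_1|v_2) = (u_1|A^*v_2)$ and $(A^*u_2|v_1) = (u_2|Av_1)$. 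These identities make the four pairs cancel term by term, giving $\braket{Tx}{y}_1 + \braket{x}{Ty}_1 = 0$.

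For the $J_2$-accretivity equivalence, a similar block computation for $x = (u_1, u_2) \in \mdef(T)$ yields
\[
\braket{Tx}{x}_2 = (Cu_1|u_1) - (A^*u_2|u_1) + (Au_1|u_2) + (Bu_2|u_2).
\]
Since $B$ and $C$ are symmetric, $(Cu_1|u_1)$ and $(Bu_2|u_2)$ are real. Since $(A^*u_2|u_1) = (u_2|Au_1) = \overline{(Au_1|u_2)}$, the cross terms combine to $(Au_1|u_2) - \overline{(Au_1|u_2)} = 2\mathrm{i}\,\Imag(Au_1|u_2)$, which is purely imaginary. Taking real parts gives the clean identity
\[
\Real \braket{Tx}{x}_2 = (Cu_1|u_1) + (Bu_2|u_2), \qquad x = (u_1, u_2) \in \mdef(T).
\]

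The equivalence then follows immediately. If $B$ and $C$ are nonnegative, the right-hand side is nonnegative, so $T$ is $J_2$-accretive. Conversely, specialising to $x = (u_1, 0)$ yields $(Cu_1|u_1) \geq 0$ for all $u_1 \in \mdef(A)$, and to $x = (0, u_2)$ yields $(Bu_2|u_2) \geq 0$ for all $u_2 \in \mdef(A^*)$ — which is the natural meaning of nonnegativity of $C$ and $B$ in this context, as these are precisely the subdomains on which they act inside~$T$. I do not expect any genuine obstacle; the proof amounts to careful bookkeeping of complex conjugates and the observation that the off-diagonal $A$-terms contribute only imaginary parts.
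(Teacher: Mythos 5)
Your proof is correct and follows essentially the paper's approach: a direct block computation using the symmetry of $B$, $C$ and the adjoint relation between $A$ and $A^*$. The paper records only the quadratic forms $[Tx|x]_1$ and $\Real\,[Tx|x]_2$ (with $J_1$-skew-symmetry then following by polarization), whereas you verify the full sesquilinear identity $[Tx|y]_1=-[x|Ty]_1$; that is a cosmetic rather than a substantive difference.
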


\begin{proof}
  The  assertions are immediate from
  \begin{align*}
  [T(u,v)|(u,v)]_1&=\ri\,(2\Real(Au|v)+(Bv|v)-(Cu|u))\in\ri\R,\\
  \Real\,[T(u,v)|(u,v)]_2&=(Bv|v)+(Cu|u). \\[-13mm]
  \end{align*}
%%   \begin{align*}
%%     \Braket{\pmat{A&B\\C&-A^*}\pmat{u\\v}}{\pmat{\tilde{u}\\\tilde{v}}}_1
%%     &=\ri(Au+Bv|\tilde{v})-\ri(Cu-A^*v|\tilde{u})\\
%%     &=\ri(u|A^*\tilde{v}-C\tilde{u})-\ri(v|-B\tilde{v}-A\tilde{u})\\
%%     &=\Braket{\pmat{u\\v}}{\pmat{-A&-B\\-C&A^*}\pmat{\tilde{u}\\\tilde{v}}}_1
%%   \end{align*}
%%   and
%%   \begin{align*}
%%     \Real\Bigl[\pmat{A&B\\C&-A^*}\pmat{u\\v}\Big|\pmat{u\\v}\Bigr]_2
%%     &=\Real\bigl((Au+Bv|v)+(Cu-A^*v|u)\bigr)\\
%%     &=(Bv|v)+(Cu|u).
%%   \end{align*}
  \end{proof}
  
%\vspace{1mm}  

\begin{coroll}\label{coroll:hypmax}
  In the situation of Theorem~\emph{\ref{theo:dichotham}}, let 
  $H\times H=V_+\oplus V_-$ be the decomposition corresponding to the 
  dichotomy of\, $T$. 
  Then $V_+,V_-$ are hypermaximal $J_1$-neutral, $V_+$ is $J_2$-nonnegative,
  and $V_-$ is $J_2$-nonpositive.
\end{coroll}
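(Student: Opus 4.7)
The plan is to simply combine the three preceding results: Theorem~\ref{theo:dichotham}, the symmetry lemma for Hamiltonians with respect to $J_1$ and $J_2$, and the Proposition on dichotomous operators in Krein spaces. No new estimates or constructions are needed; the work was done in setting up the integral representation of $P_+ - P_-$ and in verifying the Krein-space symmetries of~$T$.

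First, I would invoke Theorem~\ref{theo:dichotham} to obtain the dichotomy of $T$ together with the spectral projections $P_\pm$ onto $V_\pm$ given by the Cauchy principal value integral
\[
  \frac{1}{\pi\ri}\int_{\ri\R}^\prime (T-z)^{-1}x\, dz = P_+ x - P_- x, \qquad x \in H\times H.
\]
This is exactly the hypothesis needed to apply the Proposition that characterises $J$-properties of $V_\pm$ from $J$-properties of $T$.

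Second, I would appeal to the Lemma immediately preceding the corollary: the Hamiltonian $T$ is $J_1$-skew-symmetric; moreover, since $T$ is nonnegative (by the hypotheses of Theorem~\ref{theo:dichotham}), $T$ is also $J_2$-accretive. With these two symmetry properties in hand, I would apply the Proposition twice. Taking $J = J_1$, part~(ii) of the Proposition gives that both $V_+$ and $V_-$ are hypermaximal $J_1$-neutral. Taking $J = J_2$, part~(i) of the Proposition gives that $V_+$ is $J_2$-nonnegative and $V_-$ is $J_2$-nonpositive.

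There is essentially no obstacle here: the proof is a direct bookkeeping step that assembles Theorem~\ref{theo:dichotham}, the Krein-space symmetry lemma, and the general Proposition. The only thing to be slightly careful about is noting that the same decomposition $H\times H = V_+\oplus V_-$ produced by the dichotomy is simultaneously the one to which both indefinite inner products $[\,\cdot\,|\,\cdot\,]_1$ and $[\,\cdot\,|\,\cdot\,]_2$ are applied — but this is automatic since both conclusions are statements about the spectral subspaces of the single dichotomy of $T$ guaranteed by Theorem~\ref{theo:dichotham}.
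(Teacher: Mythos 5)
Your proof is correct and is exactly the intended argument: the paper states the corollary without proof precisely because it follows immediately by feeding the conclusion of Theorem~\ref{theo:dichotham} (dichotomy plus the principal-value integral representation of $P_+-P_-$) and the Krein-space symmetries of $T$ from the preceding Lemma ($J_1$-skew-symmetry, and $J_2$-accretivity from nonnegativity) into parts~(ii) and~(i) of the Proposition on dichotomous operators in Krein spaces, respectively.
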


%%%%%%%%%%%%%%%%%%%%%%%%%%%%%%%%%%%%%%%%%%%%%%%%%%%%%%%%

\section{Invariant graph subspaces and Riccati equations}
\label{sec:ricc}

There is a close relation between the invariance of \emph{graph~subspaces} 
\[
  \graph(X)=\Bigl\{\pmat{u\\Xu}\,\Big|\,u\in\mdef(X)\Bigr\}
\]
of linear operators $X$ on a Hilbert space $H$ under a block operator matrix  and
solutions of Riccati equations, see e.g.\ 
\cite{bart-gohberg-kaashoek79,kostrykin-makarov-motovilov,wyss-rinvsubham};
in our setting it reads as follows:

\begin{lemma}\label{lem:riccsol}
  Let $T$ be a diagonally dominant Hamiltonian and $X$ a linear operator on $H$.
  Then the graph subspace $\graph(X)$ is $T$-invariant
  if and only if $X$ is
  a solution of the Riccati equation
  \begin{equation}\label{eq:are}
    (A^*X+X(A+BX)-C)u=0, \qquad
    u\in\mdef(A)\cap X^{-1}\mdef(A^*);
  \end{equation}
  in particular, $(A+BX)u\in\mdef(X)$ for all
  $u\in\mdef(A)\cap X^{-1}\mdef(A^*)$.
\end{lemma}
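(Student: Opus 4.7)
The plan is to prove the equivalence by direct computation of $T(u,Xu)^\top$ for vectors $(u,Xu)$ in $\graph(X)\cap\mdef(T)$, and then read the resulting identity in both directions.

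Since $T$ is diagonally dominant, $C$ is $A$-bounded and $B$ is $A^*$-bounded, so $\mdef(C)\supset\mdef(A)$, $\mdef(B)\supset\mdef(A^*)$, and $\mdef(T)=\mdef(A)\times\mdef(A^*)$. Consequently, a vector $(u,Xu)\in\graph(X)$ lies in $\mdef(T)$ if and only if $u\in\mdef(A)$ and $Xu\in\mdef(A^*)$, i.e.\ if and only if $u\in\mdef(A)\cap X^{-1}\mdef(A^*)$ (which, by the definition of $X^{-1}\mdef(A^*)$, already forces $u\in\mdef(X)$). For any such $u$ a straightforward matrix–vector multiplication gives
\[
  T\pmat{u\\Xu}=\pmat{Au+BXu\\Cu-A^*Xu}=\pmat{(A+BX)u\\Cu-A^*Xu}.
\]
By definition, this vector belongs to $\graph(X)$ exactly when two conditions hold: first, its upper component is in $\mdef(X)$, i.e.\ $(A+BX)u\in\mdef(X)$; and second, its lower component equals $X$ applied to its upper component, i.e.\ $X(A+BX)u=Cu-A^*Xu$. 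The latter relation is, after rearrangement, exactly the Riccati equation $(A^*X+X(A+BX)-C)u=0$.

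Reading this equivalence in both directions finishes the proof. If $\graph(X)$ is $T$-invariant, then both conditions above are satisfied for every $u\in\mdef(A)\cap X^{-1}\mdef(A^*)$; this yields the Riccati equation and, as a byproduct, the claim that $(A+BX)u\in\mdef(X)$. Conversely, if $X$ solves the Riccati equation on $\mdef(A)\cap X^{-1}\mdef(A^*)$, then the expression $X(A+BX)u$ appearing in the equation is implicitly required to be defined, so $(A+BX)u\in\mdef(X)$, and the two conditions above hold, which is the $T$-invariance of $\graph(X)$.

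There is no serious obstacle here: the statement is purely a matter of matching domains and rewriting the block-matrix action on $\graph(X)$. The only point that must not be skipped is the invocation of diagonal dominance to identify $\mdef(T)=\mdef(A)\times\mdef(A^*)$ and to ensure that $Cu$ and $BXu$ are defined whenever $u\in\mdef(A)$ and $Xu\in\mdef(A^*)$; without this, the computation of $T(u,Xu)^\top$ would not even make sense.
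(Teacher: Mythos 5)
Your proof is correct and follows essentially the same approach as the paper's: compute $T(u,Xu)^\top$ for graph vectors in $\mdef(T)$ and observe that membership of the result in $\graph(X)$ is precisely the Riccati identity together with the domain condition $(A+BX)u\in\mdef(X)$. You are somewhat more explicit about the domain bookkeeping (identifying $\mdef(T)=\mdef(A)\times\mdef(A^*)$ via diagonal dominance), which the paper leaves implicit, but this is the same argument.
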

\begin{proof}
  $\graph(X)$ is $T$-invariant if and only if for
  all $u\in\mdef(A)\cap\mdef(X)$ with $Xu\in\mdef(A^*)$ there exists
  $v\in\mdef(X)$ such that 
  \[\pmat{Au+BXu\\Cu-A^*Xu}=T\pmat{u\\Xu}=\pmat{v\\Xv},\]
  and this is obviously equivalent to \eqref{eq:are}.
\end{proof}

Graph subspaces are closely related to the Krein space fundamental symmetries
$J_1,J_2$ introduced in \eqref{fundsymm}, see also
\cite{dijksma-desnoo}:

\begin{lemma}[\mbox{\cite[Lemma~6.2]{wyss-rinvsubham}}] 
  \label{lem:graphJsym}
  Let $X$ be a linear operator on $H$. Then
  \begin{itemize}
  \item[{\rm (i)}] $X$ is selfadjoint if and only if $\,\graph(X)$ is hypermaximal $J_1$-neutral;
  \end{itemize}
  if $\,X$ is symmetric, then
  \begin{itemize}    
  \item[{\rm (ii)}] $X$
    is nonnegative $($nonpositive, respectively$)$ if and only if $\,\graph(X)$ is $J_2$-non\-negative $(J_2$-nonpositive, respectively$)$.
  \end{itemize}
\end{lemma}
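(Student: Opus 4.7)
My plan is to verify both statements by direct computation of the indefinite inner products on graph subspaces, using the explicit form of the fundamental symmetries $J_1,J_2$ in \eqref{fundsymm}.

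For (i), I would first compute, for $x=(u,Xu),y=(v,Xv)\in\graph(X)$, that
\[
  \braket{x}{y}_1 = (J_1 x|y) = \ri\bigl((u|Xv)-(Xu|v)\bigr).
\]
In particular, $\graph(X)$ is $J_1$-neutral if and only if $(Xu|v)=(u|Xv)$ for all $u,v\in\mdef(X)$, which is exactly the symmetry condition $X\subset X^*$. Next, I would identify the $J_1$-orthogonal complement: a pair $(v_1,v_2)\in H\times H$ lies in $\graph(X)^{\Jorth}$ iff $\ri((u|v_2)-(Xu|v_1))=0$ for every $u\in\mdef(X)$, i.e.\ iff the map $u\mapsto(Xu|v_1)$ is bounded on $\mdef(X)$ with value $(u|v_2)$; this is precisely the statement $v_1\in\mdef(X^*)$ and $X^*v_1=v_2$, so $\graph(X)^{\Jorth}=\graph(X^*)$. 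Consequently, hypermaximal $J_1$-neutrality of $\graph(X)$, i.e.\ $\graph(X)=\graph(X)^{\Jorth}$, is equivalent to $\graph(X)=\graph(X^*)$, which means $X=X^*$.

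For (ii), I would compute analogously, for $x=(u,Xu)\in\graph(X)$,
\[
  \braket{x}{x}_2=(J_2x|x)=(Xu|u)+(u|Xu)=2\Real(Xu|u).
\]
If $X$ is symmetric, then $(Xu|u)\in\R$ for $u\in\mdef(X)$, hence $\braket{x}{x}_2=2(Xu|u)$. Therefore $\braket{x}{x}_2\geq 0$ (respectively $\leq 0$) for all $x\in\graph(X)$ is equivalent to $(Xu|u)\geq 0$ (respectively $\leq 0$) for all $u\in\mdef(X)$, which is the nonnegativity (respectively nonpositivity) of $X$.

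Both parts are essentially direct verifications; the only subtle point is the identification $\graph(X)^{\Jorth}=\graph(X^*)$ in (i), where one must carefully invoke the definition of the adjoint on an operator whose domain need not be dense a priori. Here however, since $X$ is shown symmetric in the first step of (i), $\mdef(X)\subset\mdef(X^*)$ is automatic and $X^*$ is well-defined provided $\mdef(X)$ is dense in $H$; this density is the standing assumption on the operators considered, so no further obstacle arises.
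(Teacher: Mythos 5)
Your proof is correct, and it takes the natural direct route: compute $\braket{\cdot}{\cdot}_{1}$ and $\braket{\cdot}{\cdot}_{2}$ on graph vectors, identify $\graph(X)^{\Jorth}$ with $\graph(X^*)$, and read off the equivalences. The paper does not prove this lemma itself but simply cites \cite[Lemma~6.2]{wyss-rinvsubham}; the computational verification you give is the standard argument. One small sharpening for part~(i): you do not need to invoke density of $\mdef(X)$ as a standing hypothesis, since hypermaximal $J_1$-neutrality already forces it. Indeed, if $w\perp\mdef(X)$ then $(0,w)\in\graph(X)^{\Jorth}$, but $(0,w)\in\graph(X)$ only if $w=0$; so $\graph(X)=\graph(X)^{\Jorth}$ implies $\overline{\mdef(X)}=H$, after which your identification $\graph(X)^{\Jorth}=\graph(X^*)$ makes sense and the argument closes as you describe.
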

%% \begin{proof}
%%   Let $\graph(X)$ be hypermaximal $J_1$-neutral.
%%   Then $w\in\mdef(X)^\perp$  implies
%%   \[\Braket{\pmat{u\\X u}}{\pmat{0\\w}}_1=\ri(u|w)=0
%%     \quad\text{for all }u\in\mdef(X).\]
%%   Hence $(0,w)\in \graph(X)^\Jorth=\graph(X)$ and so $w=0$: $X$ is 
%%   densely defined.
%%   From
%%   \[0=\Braket{\pmat{u\\X u}}{\pmat{v\\X v}}_1=\ri(u|X v)
%%     -\ri(X u|v)\]
%%   for $u,v\in\mdef(X)$ we see that $X$ is symmetric.
%%   If finally $v\in\mdef(X^*)$, then
%%   \[\Braket{\pmat{u\\X u}}{\pmat{v\\X^* v}}_1
%%     =\ri(u|X^* v)-\ri(X u|v)=0
%%     \quad\text{for all }u\in\mdef(X),\]
%%   which implies $(v,X^*v)\in \graph(X)$ and thus $v\in\mdef(X)$ and
%%   $X^*v=X v$. $X$ is thus selfadjoint.
%%   The proof of the other assertions is similar.
%% %   For the other implication note that $(v,w)\in\graph(X)^\Jorth$
%% %   implies
%% %   \[0=\Braket{\pmat{u\\Xu}}{\pmat{v\\w}}=i(u|w)-i(Xu|v)
%% %     \quad\text{for all}\quad u\in\mdef(X),\]
%% %   i.e.\ $v\in\mdef(X^*)$, $w=X^*v$.
%% %   For symmetric $X$, the second assertion follows directly from
%% %   \[\Bigl[\pmat{u\\Xu}\Big|\pmat{u\\Xu}\Bigr]=2(Xu|u).\]
%% %   \qedup
%% \end{proof}

The next theorem generalises \cite[Theorem~5.1]{langer-ran-rotten}
where the off-diagonal operators $B$ and $C$ were assumed to be bounded,
and it complements results in
\cite{kuiper-zwart,wyss-rinvsubham,wyss-unbctrlham} 
where Hamiltonians $T$ possessing a Riesz basis of generalised
eigenvectors but without dichotomy were investigated.

\begin{theo}\label{theo:ham-ricc}
  Let $T$ be a nonnegative diagonally $p$-dominant Hamiltonian
  with $p<1$ such that $A$ is sectorially dichotomous and
%%   \begin{equation}\label{eq:approxcontr}
%%     \mspan\bigset{(A-\ri t)^{-1}B^*u}{t\in\R,\,u\in\mdef(B^*)}
%%     \subset H \quad\text{is dense}.
%%   \end{equation}
  \begin{equation}\label{eq:approxcontr}
    \bigcap_{t\in\R}\ker (B(A^*+\ri t)^{-1})=\{0\}.
  \vspace{-2mm}  
  \end{equation}
  Then 
  \begin{itemize}
  \item[{\rm (i)}]
  $T$ is dichotomous and its spectral subspaces
  %$V_\pm\!$  
  are graph subspaces, $V_\pm\!\!=\!\graph(X_\pm)$;
  \item[{\rm (ii)}]
  $X_\pm$ are selfadjoint, $X_+$ is nonnegative, and $X_-$ is nonpositive;
  \item[{\rm (iii)}]
  $\mdef(A)\cap X_\pm^{-1}\mdef(A^*)$ are a core for $X_\pm$
  and $X_\pm$ satisfy the Riccati equations
  \begin{equation}\label{eq:riccati}
    (A^*X_\pm+X_\pm(A+BX_\pm)-C)u=0, \qquad
    u\in\mdef(A)\cap X_\pm^{-1}\mdef(A^*).
  \end{equation}
  \end{itemize}
%%   If
%%   \begin{equation}\label{eq:approxobsv}
%%     \mspan\set{(A^*-z)^{-1}C^*u}{|\Real z|<h,\,u\in\mdef(C^*)}
%%     \subset H \quad\text{is dense},
%%   \end{equation}
%%   then
%%   $V_\pm=\invgraph(Y_\pm)$ where $Y_\pm$ are selfadjoint,
%%   nonnegative/nonpositive,
%%   \begin{equation}\label{eq:altriccati}
%%     Y_\pm(CY_\pm v-A^* v)=AY_\pm v+Bv \quad\text{for all}\quad
%%     v\in\mdef(A^*)\cap Y_\pm^{-1}\mdef(A),
%%   \end{equation}
%%   and $\mdef(A^*)\cap Y_\pm^{-1}\mdef(A)$ is a core for $Y_\pm$.
\end{theo}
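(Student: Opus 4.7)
The plan is to first invoke Theorem~\ref{theo:dichotham}, which yields the dichotomy of $T$ together with the decomposition $H\times H=V_+\oplus V_-$ and the resolvent integral representation of the spectral projections $P_\pm$. Corollary~\ref{coroll:hypmax} then supplies the Krein-space symmetries of $V_\pm$: they are hypermaximal $J_1$-neutral, $V_+$ is $J_2$-nonnegative and $V_-$ is $J_2$-nonpositive. This settles the dichotomy assertion in (i).

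The heart of the proof is to show $V_\pm\cap(\{0\}\times H)=\{0\}$, so that $V_\pm=\graph(X_\pm)$ with $\mdef(X_\pm):=\pi_1(V_\pm)$. I would fix $(0,v)\in V_+$. Since $[(0,v)|(0,v)]_2=2\Real(0|v)=0$, the vector $(0,v)$ is isotropic with respect to the semi-definite form $[\cdot|\cdot]_2|_{V_+}$, and Cauchy--Schwarz for semi-definite forms yields $[(0,v)|y]_2=0$ for all $y\in V_+$, i.e.\ $v\perp\pi_1(V_+)$. For each $t\in\R$, set $(u_t,w_t):=(T-\ri t)^{-1}(0,v)\in V_+\cap\mdef(T)$. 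The identity $T(u_t,w_t)=\ri t(u_t,w_t)+(0,v)$ together with $[(u_t,w_t)|(u_t,w_t)]_2=2\Real(u_t|w_t)\in\R$ gives
\[
   \Real\,[T(u_t,w_t)|(u_t,w_t)]_2=\Real(v|u_t)=0,
\]
the last equality because $u_t\in\pi_1(V_+)$. By the $J_2$-accretivity of $T$, however, the same real part equals $(Bw_t|w_t)+(Cu_t|u_t)\geq0$, so both summands vanish, and since $B$, $C$ are nonnegative symmetric, Cauchy--Schwarz for $B$ and $C$ forces $Bw_t=0$ and $Cu_t=0$. Feeding this back into the two rows of $(T-\ri t)(u_t,w_t)=(0,v)$ reduces them to $(A-\ri t)u_t=0$ and $(A^*+\ri t)w_t=-v$; using $\ri\R\subset\varrho(A)\cap\varrho(A^*)$ one concludes $u_t=0$ and $w_t=-(A^*+\ri t)^{-1}v$. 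Hence $B(A^*+\ri t)^{-1}v=-Bw_t=0$ for every $t\in\R$, and hypothesis \eqref{eq:approxcontr} forces $v=0$. The argument for $V_-$ is identical once $[\cdot|\cdot]_2$ is replaced by $-[\cdot|\cdot]_2$ as the nonnegative form on $V_-$.

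With $V_\pm=\graph(X_\pm)$ in hand, part (ii) is immediate from Lemma~\ref{lem:graphJsym}: hypermaximal $J_1$-neutrality of $V_\pm$ yields selfadjointness of $X_\pm$, and the $J_2$-signs of $V_\pm$ yield $X_+\geq0$ and $X_-\leq0$. For (iii), Lemma~\ref{lem:riccsol} applied to the $T$-invariant graph $\graph(X_\pm)$ directly delivers the Riccati equation \eqref{eq:riccati} on $\mdef(A)\cap X_\pm^{-1}\mdef(A^*)$. The core property follows from the observation that $u\mapsto(u,X_\pm u)$ is an isometry between $\mdef(X_\pm)$ with its graph norm and $V_\pm$ with the ambient Hilbert norm; under it, $\mdef(A)\cap X_\pm^{-1}\mdef(A^*)$ corresponds to $\mdef(T|_{V_\pm})=V_\pm\cap\mdef(T)$, and the latter is dense in $V_\pm$ because, by the domain decomposition in Lemma~\ref{lem:dichot}, it coincides with $P_\pm\mdef(T)$, which is the image of the dense set $\mdef(T)$ under the bounded projection $P_\pm$.

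The main obstacle is the middle paragraph: extracting the entire family of identities $B(A^*+\ri t)^{-1}v=0$ from the single datum $(0,v)\in V_\pm$. The key trick is to play $J_2$-accretivity of $T$ against the $J_2$-isotropy of $(0,v)$ at the test vectors $(T-\ri t)^{-1}(0,v)$; the purely imaginary mixed term $\ri t\cdot 2\Real(u_t|w_t)$ drops out of the real part, simultaneously killing $Bw_t$ and $Cu_t$. The Hamiltonian system then collapses and $w_t$ is forced to equal $-(A^*+\ri t)^{-1}v$, which is precisely where the approximate-controllability-type assumption \eqref{eq:approxcontr} enters to conclude $v=0$.
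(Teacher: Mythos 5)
Your proof is correct, and all three parts ultimately match the paper's conclusions, but the middle step — showing that $(0,v)\in V_\pm$ forces $v=0$ — takes a genuinely different route. The paper exploits $J_1$-neutrality directly: since $(0,w)$ and $(u,v)=(T-\ri t)^{-1}(0,w)$ both lie in the $J_1$-neutral subspace $V_\pm$, one has $0=[(0,w)|(u,v)]_1=-\ri\,(w|u)$, and substituting the two rows of $(T-\ri t)(u,v)=(0,w)$ into $(w|u)=0$ gives $(Cu|u)+(Bv|v)=0$ in one line. You instead use the $J_2$-semidefiniteness of $V_+$: the vector $(0,v)$ is $J_2$-isotropic, Cauchy--Schwarz for the semidefinite form gives $(0,v)\perp_{J_2} V_+$, i.e.\ $v\perp\pi_1(V_+)$, and then $\Real[T(u_t,w_t)|(u_t,w_t)]_2=\Real(v|u_t)=0$ combined with the accretivity identity $\Real[T(u,w)|(u,w)]_2=(Bw|w)+(Cu|u)$ yields the same conclusion. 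Both arguments are equally rigorous; the paper's $J_1$-route is a bit more economical (no Cauchy--Schwarz step) and treats $V_+$ and $V_-$ uniformly, whereas yours needs the sign of $[\cdot|\cdot]_2$ flipped for $V_-$. Conceptually your version highlights that the purely imaginary term $\ri t\,[\,(u_t,w_t)|(u_t,w_t)]_2$ drops out of the real part, which is a nice way of seeing where the nonnegativity of $B$ and $C$ enters. The remainder — deducing $Bw_t=0$, $Cu_t=0$, then $w_t=-(A^*+\ri t)^{-1}v$ and invoking \eqref{eq:approxcontr}; Lemmas~\ref{lem:graphJsym} and~\ref{lem:riccsol} for (ii) and (iii); and the density of $V_\pm\cap\mdef(T)=P_\pm\mdef(T)$ for the core property — agrees with the paper, with your core argument being a slightly more explicit version of the paper's one-sentence justification.
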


\begin{proof}
  (i) By Theorem~\ref{theo:dichotham} and Corollary~\ref{coroll:hypmax},
  $T$ is dichotomous, $V_+$, $V_-$ are hypermaximal $J_1$-neutral,
  $V_+$ is $J_2$-nonnegative, and $V_-$ is $J_2$-non\-po\-si\-tive.
  To show that $V_\pm=\graph(X_\pm)$
  with some linear operator $X_\pm$, 
  it suffices to show that
  $(0,w)\in V_\pm$ implies $w=0$.
  Setting $(u,v):=(T-\ri t)^{-1}(0,w)$ for $t\in \R$, we have
  \[(A-\ri t)u+Bv=0,\quad Cu-(A^*+\ri t)v=w.\]
  Since $V_\pm$ is $J_1$-neutral and invariant under $(T-\ri t)^{-1}$,
  this implies that
  \[0=\Braket{\pmat{0\\w}}{\pmat{u\\v}}_1=-\ri(w|u)\]
  and thus
  \[0=(w|u)=(Cu|u)-(v|(A-\ri t)u)=(Cu|u)+(Bv|v).\]
  Since $B$ and $C$ are nonnegative, it follows that $0=(Cu|u)=(Bv|v)$.
  Thus, for all $r\in\R$ and $\tilde{v}\in \mdef(B)$,
  \[0\leq (B(rv+\tilde{v})|rv+\tilde{v})=2r\Real(Bv|\tilde{v})
    +(B\tilde{v}|\tilde{v}),\]
  which yields $Bv=0$.
%%   As $B$ admits a non\-negative selfadjoint extension $\widetilde{B}$,
%%   we obtain $\|\widetilde{B}^{1/2}v\|^2=(\widetilde{B}v|v)=(Bv|v)=0$ and hence
%%   $Bv=(\widetilde{B}^{1/2})^2v=0$.
  Similarly, we obtain  $Cu=0$ and so $w=-(A^*+\ri t)v$.
%%   For $\tilde{u}\in\mdef(B^*)$ we conclude that
%%   \[\bigl(w\big|(A-\ri t)^{-1}B^*\tilde{u}\bigr)
%%     =\bigl((A^*+\ri t)^{-1}w\big|B^*\tilde{u}\bigr)=-(v|B^*\tilde{u})
%%     =-(Bv|\tilde{u})=0.\]
%%   As $t\in\R$ and $\tilde{u}\in\mdef(B^*)$ were arbitrary,
%%   \eqref{eq:approxcontr} now implies $w=0$.
  We conclude that
  \(B(A^*+\ri t)^{-1}w=-Bv=0\).
  As $t\in\R$ was arbitrary,
  \eqref{eq:approxcontr} implies that $w=0$.

  (ii), (iii) Since $V_\pm=\graph(X_\pm)$ are hypermaximal $J_1$-neutral
  and $J_2$-nonnegative/-non\-po\-si\-tive, 
  Lemma \ref{lem:graphJsym} shows that
  $X_\pm$ are selfadjoint and nonnegative/non\-po\-si\-tive, 
  while Lemma \ref{lem:riccsol} shows that $X_\pm$ satisfy
  \eqref{eq:riccati}.
  Moreover, we have $(u,X_\pm u)\in\mdef(T)$ if and only if 
  $u\in\mdef(A)\cap X_\pm^{-1}\mdef(A^*)$.
  Since $V_\pm\cap\mdef(T)$ are dense in $V_\pm$, this implies that
  $\mdef(A)\cap X_\pm^{-1}\mdef(A^*)$ are a core for $X_\pm$.
%%   The proof for the case that \eqref{eq:approxobsv} holds is completely
%%   analogous.
\end{proof}

Next we derive necessary as well as sufficient conditions for assumption~\eqref{eq:approxcontr}. % and \eqref{eq:approxobsv} 

\begin{prop}\label{prop:apctrlcond}
  Let $A$ be a closed densely defined linear operator with $\ri\R\subset\varrho(A)$ and $B$ symmetric
  with $\mdef(A^*)\subset\mdef(B)$.
  Then the assertions
  \begin{itemize}
  \item[{\rm (i)}] $\ker B=\{0\}$,
  \item[{\rm (ii)}] $\bigcap_{t\in\R}\ker (B(A^*+\ri t)^{-1})=\{0\}$,  \ \ see \eqref{eq:approxcontr},
  \item[{\rm (iii)}]
    $\overline{\mspan\set{(A-\ri t)^{-1}B^*u}{t\in\R,\,u\in\mdef(B^*)}}= H$,
  \item[{\rm (iv)}] $\forall \:\lambda\in\sigma_p(A^*): \ 
    \ker B\cap\ker(A^*-\lambda)=\{0\}$
  \end{itemize}
  satisfy the \vspace{-1.5mm} implications 
   $${\rm (i)}\Longrightarrow{\rm (ii)}\Longleftrightarrow{\rm (iii)}\Longrightarrow{\rm (iv)};$$
%  
  %%   \[\begin{array}{rcl}
%%   (i)\Rightarrow&(ii)&\Rightarrow(iv)\\
%%   &\Updownarrow&\end{array}\]
  if $A$ has compact resolvent and possesses a complete system
  of generalised eigenvectors, then \vspace{-1.5mm} even 
  $${\rm (iii)}\Longleftrightarrow{\rm (iv)}.$$
\end{prop}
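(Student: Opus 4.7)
\begin{proof*}[Proof plan]
The plan is to first handle the three implications in the main chain and then treat the final equivalence under the extra spectral hypotheses.

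For the equivalence (ii)$\Leftrightarrow$(iii) I would argue by duality. For real $t$ one has $(A-\ri t)^*=A^*+\ri t$, so for $w\in H$, $u\in\mdef(B^*)$ the identity
\[
  (w\,|\,(A-\ri t)^{-1}B^*u)
    =((A^*+\ri t)^{-1}w\,|\,B^*u)
    =(B(A^*+\ri t)^{-1}w\,|\,u)
\]
holds, using that $(A^*+\ri t)^{-1}w\in\mdef(A^*)\subset\mdef(B)\subset\mdef(B^*)$. Since $B$ is symmetric and densely defined, $B$ is closable and hence $\mdef(B^*)$ is dense in $H$; this turns orthogonality of $w$ to the span in (iii) into the equation $B(A^*+\ri t)^{-1}w=0$ for every $t\in\R$. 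Hence the span is dense if and only if the intersection in (ii) is trivial. For (i)$\Rightarrow$(ii) the resolvent $(A^*+\ri t)^{-1}$ is a bijection, so $B(A^*+\ri t)^{-1}w=0$ together with $\ker B=\{0\}$ yields $w=0$. For (ii)$\Rightarrow$(iv), take $v\in\ker B\cap\ker(A^*-\lambda)$ with $\lambda\in\sigma_p(A^*)$; since $\ri\R\subset\varrho(A^*)$ one has $\lambda+\ri t\neq0$ for $t\in\R$, and $(A^*+\ri t)^{-1}v=(\lambda+\ri t)^{-1}v$, so $B(A^*+\ri t)^{-1}v=(\lambda+\ri t)^{-1}Bv=0$ and (ii) forces $v=0$.

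The core of the argument is (iv)$\Rightarrow$(iii) under compact resolvent and completeness of generalised eigenvectors of $A$. Fix $w$ in the intersection in (ii). Because $\mdef(A^*)\subset\mdef(B)$, the closed graph theorem makes $B(A^*-z)^{-1}$ a bounded operator on $H$ for every $z\in\varrho(A^*)$; the resolvent identity then shows that $g(z):=B(A^*-z)^{-1}w$ is an $H$-valued analytic function on $\varrho(A^*)$. Compactness of the resolvent makes $\sigma(A^*)$ discrete and $\varrho(A^*)$ connected, so $g\equiv0$ on $\varrho(A^*)$ follows from its vanishing on $\ri\R$.

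For each $\mu\in\sigma_p(A^*)$, the generalised eigenspace $N_\mu=\range P_\mu$ is finite-dimensional, and near $\mu$ the Laurent expansion
\[
  (A^*-z)^{-1}=-\sum_{n=1}^{k_\mu}(z-\mu)^{-n}(A^*-\mu)^{n-1}P_\mu+\text{analytic}
\]
holds (with $k_\mu$ the pole order). Applying the bounded operator $B$ termwise and matching Laurent coefficients of $g\equiv0$ yields $B(A^*-\mu)^{n-1}P_\mu w=0$ for $n=1,\dots,k_\mu$. Then I iterate: $(A^*-\mu)^{k_\mu-1}P_\mu w$ lies in $\ker(A^*-\mu)\cap\ker B$, so (iv) forces it to be $0$; dropping one power at a time, (iv) repeatedly kills $(A^*-\mu)^j P_\mu w$ and eventually $P_\mu w=0$. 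Finally, the duality $(P_\mu^{A^*})^*=P_{\bar\mu}^A$ between Riesz projections of $A^*$ and $A$ gives $w\perp\range(P_{\bar\mu}^A)=N_{\bar\mu}^A$ for every $\bar\mu\in\sigma_p(A)$, so completeness of the generalised eigenvectors of $A$ yields $w=0$, establishing (ii) and hence (iii).

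The main obstacle is the descent through the Jordan block at each $\mu$: assumption (iv) only excludes honest eigenvectors from $\ker B$, so one has to squeeze out the whole information of $g\equiv0$ as a set of Laurent identities $B(A^*-\mu)^{n-1}P_\mu w=0$ and then peel off generalised eigenvectors one level at a time. Without the Laurent expansion (hence without compactness of the resolvent) or without completeness of the eigenvectors of $A$, this reduction to (iv) breaks down.
\end{proof*}
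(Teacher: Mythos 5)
Your proof is correct and follows essentially the same blueprint as the paper: the same duality identity relating $B(A^*+\ri t)^{-1}$ and $(A-\ri t)^{-1}B^*$ for the equivalence (ii)$\Leftrightarrow$(iii), the same one-line spectral calculation for (ii)$\Rightarrow$(iv), and the same final appeal to the Riesz-projection duality $(P_\mu^{A^*})^*=P_{\bar\mu}^{A}$ together with completeness of the generalised eigenvectors of $A$ to conclude $w=0$. Where you differ from the paper is in the middle of (iv)$\Rightarrow$(ii): the paper first shows that $\cN:=\bigcap_{t}\ker(B(A^*+\ri t)^{-1})$ is invariant under $(A^*-z)^{-1}$ for all $z\in\varrho(A^*)$, then runs an induction on the finite-dimensional subspaces $\cN\cap\ker(A^*-\lambda)^n$ using that they are also $A^*$-invariant; you instead extract the relations $B(A^*-\mu)^{n-1}P_\mu w=0$, $n=1,\dots,k_\mu$, directly from the Laurent expansion of $g(z)=B(A^*-z)^{-1}w\equiv 0$ at each pole. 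These are two packagings of the same content; the Laurent computation is more explicit, while the invariance-of-$\cN$ argument is more structural and yields the $P^*$-invariance needed at the end for free. One phrase in your write-up should be corrected: you say you apply ``the bounded operator $B$'' termwise, but $B$ itself is unbounded --- only $B(A^*-z)^{-1}$ is bounded. The termwise application is nonetheless legitimate because $B$ is symmetric and hence closable, the integrand $(A^*-z)^{-1}w$ in the contour integral for the Laurent coefficient takes values in $\mdef(A^*)\subset\mdef(B)$, and each limiting coefficient $(A^*-\mu)^{n-1}P_\mu w$ also lies in the (finite-dimensional) generalised eigenspace, which is contained in $\mdef(B)$; closedness of $\bar B$ then lets you interchange $B$ with the contour integral.
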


\begin{remark}
  For the special case of bounded $B$, instead of
  \eqref{eq:approxcontr}, the equivalent condition (iii) in
  Proposition~\ref{prop:apctrlcond} was used in
  \cite[Theorem~5.1]{langer-ran-rotten}.  For the special case of
  normal $A$ with compact resolvent, the equivalence
  (iii)$\Leftrightarrow$(iv) in Proposition~\ref{prop:apctrlcond} was
  established in \cite[Proposition~6.6]{wyss-rinvsubham}.
\end{remark}

For the proof of Proposition \ref{prop:apctrlcond} we need the following lemma.

\begin{lemma}\label{lem:nonctrlsubsp}
  Let $A$ be a closed densely defined linear operator with
  $\ri\R\subset\varrho(A)$ and $B$ symmetric
  with $\mdef(A^*)\subset\mdef(B)$.
  Let $\rho_0$ be the connected component of $\varrho(A)$ containing~$\ri\R$.
  If $\rho\subset \rho_0$ has an accumulation point in $\rho_0$, then
  \begin{align*}
    \bigcap_{t\in\R}\ker (B(A^*+\ri t)^{-1})
    &=\bigcap_{z\in \rho}\ker (B(A^*-\bar{z})^{-1})\\[-1mm]
    &=\Bigl(\mspan\bigset{(A-z)^{-1}B^*u}{z\in \rho,\,u\in\mdef(B^*)}
    \Bigr)^\perp.
  \end{align*}
%%   In particular, \eqref{eq:approxcontr} is equivalent to the condition
%%   \begin{equation}\label{eq:approxcontr-equiv}
%%     \mspan\set{(A-z)^{-1}B^*u}{z\in M,\,u\in\mdef(B^*)}\subset H
%%     \quad\text{dense}.
%%  \end{equation}
\end{lemma}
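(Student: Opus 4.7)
The plan is to establish both equalities by showing that each collapses to a common object taken over the entire connected component $\rho_0$, and then to invoke the identity theorem for scalar (anti)holomorphic functions.

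First I would verify that for every $z\in\rho_0$ one has $\bar z\in\varrho(A^*)$ and $(A^*-\bar z)^{-1}H=\mdef(A^*)\subset\mdef(B)$, so that $B(A^*-\bar z)^{-1}$ is an everywhere defined linear operator on $H$. Since $B$ is symmetric it is closable with closure $\bar B$; a closed graph argument, using that $(A^*-\bar z)^{-1}x_n\to(A^*-\bar z)^{-1}x$ whenever $x_n\to x$ together with closedness of $\bar B$ and $\bar B|_{\mdef(B)}=B$, then shows that $B(A^*-\bar z)^{-1}$ is bounded. The first resolvent identity gives
\[
B(A^*-\bar z)^{-1}-B(A^*-\bar z_0)^{-1}=(\bar z-\bar z_0)\,B(A^*-\bar z)^{-1}(A^*-\bar z_0)^{-1},
\]
so $z\mapsto B(A^*-\bar z)^{-1}$ is antiholomorphic on $\rho_0$ in the operator norm. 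In particular, for each pair $w,u\in H$, the scalar function $\varphi_{w,u}(z):=(B(A^*-\bar z)^{-1}w\,|\,u)$ is antiholomorphic on $\rho_0$.

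The first equality now follows from the identity theorem: both $\ri\R$ and $\rho$ lie in $\rho_0$ and have accumulation points there, so $\varphi_{w,u}$ vanishes on $\rho$ if and only if it vanishes on all of $\rho_0$, if and only if it vanishes on $\ri\R$. Letting $u$ range over $H$ and using $A^*-\overline{\ri t}=A^*+\ri t$, this gives
\[
\bigcap_{t\in\R}\ker(B(A^*+\ri t)^{-1})=\bigcap_{z\in\rho_0}\ker(B(A^*-\bar z)^{-1})=\bigcap_{z\in\rho}\ker(B(A^*-\bar z)^{-1}).
\]

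For the second equality, the fact that $A$ is closed and densely defined gives $((A^*-\bar z)^{-1})^*=(A-z)^{-1}$. For $u\in\mdef(B^*)$ and $w\in H$, since $(A^*-\bar z)^{-1}w\in\mdef(A^*)\subset\mdef(B)$, the definition of $B^*$ yields
\[
\bigl(B(A^*-\bar z)^{-1}w\,\big|\,u\bigr)=\bigl((A^*-\bar z)^{-1}w\,\big|\,B^*u\bigr)=\bigl(w\,\big|\,(A-z)^{-1}B^*u\bigr).
\]
Consequently, $w$ annihilates every vector $(A-z)^{-1}B^*u$ with $z\in\rho$ and $u\in\mdef(B^*)$ if and only if $B(A^*-\bar z)^{-1}w\perp\mdef(B^*)$ for all $z\in\rho$. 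Since $B$ is symmetric, $\mdef(B^*)\supset\mdef(B)\supset\mdef(A^*)$ is dense in $H$, and so this forces $B(A^*-\bar z)^{-1}w=0$ for all $z\in\rho$, placing $w$ into the middle intersection. The main technical obstacle is the initial step — establishing boundedness and operator-norm antiholomorphicity of $z\mapsto B(A^*-\bar z)^{-1}$ in spite of the possible unboundedness of $B$; once this is in place, the identity theorem and the adjoint identity combine cleanly to deliver both equalities.
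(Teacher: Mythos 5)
Your proposal is correct and follows essentially the same route as the paper: both hinge on the boundedness of $B(A^*-\bar z)^{-1}$ (via the closed graph theorem, using that symmetric $B$ is closable), the adjoint relation $(B(A^*-\bar z)^{-1})^*\supset (A-z)^{-1}B^*|_{\mdef(B^*)}$, and the identity theorem applied on the connected set $\rho_0$. The only cosmetic differences are that the paper applies the identity theorem to the holomorphic functions $z\mapsto((A-z)^{-1}B^*u\,|\,x)$ and phrases the second equality via $\ker T=(\range T^*)^\perp$, whereas you apply it to the antiholomorphic functions $z\mapsto(B(A^*-\bar z)^{-1}w\,|\,u)$ and argue directly with inner products and density of $\mdef(B^*)$; these are two readings of the same computation.
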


\begin{proof}
%  For any $x\in H$ and $z\in M$ we have
%%  \begin{alignat*}{2}
%  \[B(A^*-\bar z)^{-1}x=0
%    \quad\Leftrightarrow\quad
%    ((A-z)^{-1}B^*u|x)=0 \quad\text{for all }u\in\mdef(B^*),\]
%%    &\Leftrightarrow\quad 
%%    x\perp (A-z)^{-1}B^*u &&\text{for all }u\in\mdef(B^*)
%%  \end{alignat*}
%  which immediately yields the second equality.
  The second identity is immediate from the identities
  \[
  \ker (B(A^*-\bar z)^{-1}) = \range(( B(A^*-\bar z)^{-1} )^*)^\perp
  = \range((A-z)^{-1}B^*)^\perp;
  \]
  note that
  we have used that $B(A^*-\bar z)^{-1}$ is bounded
  and that $(B(A^*-\bar z)^{-1})^*|_{\mdef(B^*)}=(A-z)^{-1}B^*$.
  Moreover, by the identity theorem, if $((A-z)^{-1}B^*u|x)=0$ for all 
  $z\in \rho$, then this continues to hold for all $z\in \rho_0$ and thus 
  \[\bigcap_{z\in \rho}\ker (B(A^*-\bar{z})^{-1})
  =\bigcap_{z\in \rho_0}\ker (B(A^*-\bar{z})^{-1}).\]
  Since $\ri\R$ is one possible choice for $\rho$, the proof is complete.
\end{proof}

\begin{proof}[Proof \emph{(of Proposition \ref{prop:apctrlcond})}]
  The implication (i)$\Rightarrow$(ii) is clear and (ii)$\Leftrightarrow$(iii) follows from
  Lemma~\ref{lem:nonctrlsubsp}. For the implication (ii)$\Rightarrow$(iv) we observe that if $\lambda\in \sigma_p(A^*)$ and
  $x\in\ker B\cap\ker(A^*-\lambda)$, then
  $B(A^*+\ri t)^{-1}x=(\lambda+\ri t)^{-1}Bx=0$ for all $t\in\R$ and hence $x=0$ by (ii).

  To show the reverse implication (iv)$\Rightarrow$(ii) under the additional 
  assumptions on~$A$,
  we first prove that the closed subspace
  \[\cN:=\bigcap_{t\in\R}\ker (B(A^*+\ri t)^{-1})\]
  is $(A^*-z)^{-1}$-invariant for every $z\in\varrho(A^*)$.
  Let $x\in\cN$. 
  Since $A$ has compact resolvent, $\varrho(A)$ is connected.
  Thus Lemma~\ref{lem:nonctrlsubsp} implies that  $B(A^*-z)^{-1}x=0$ for all 
  $z\in\varrho(A^*)$. 
  Hence, by the resolvent identity, we find that for all $t\in\R$, 
  $z\neq-\ri t$,
  \[B(A^*+\ri t)^{-1}(A^*-z)^{-1}x=\frac{1}{\ri t+z}\bigl(
  B(A^*-z)^{-1}x-B(A^*+\ri t)^{-1}x\bigr)=0.\]
  Therefore $\cN$ is $(A^*-z)^{-1}$-invariant for all 
  $z\in\varrho(A^*)\setminus\ri\R$ and thus,
  by continuity, for all $z\in\varrho(A^*)$.

  Secondly, we use induction on $n\in\N$ to show that
  that $\cN\cap\ker(A^*-\lambda)^n=\{0\}$ for~all 
  $\lambda\!\in\!\sigma_p(A^*)$.
  The case $n=0$ is trivial. For $n\geq1$ let
  $x\!\in\!\cN\cap\ker(A^*\!-\lambda)^n$ and set $y:=(A^*-\lambda)x$.
  Since $A$ was assumed to have compact resolvent, the subspace
  $\cN\cap\ker(A^*-\lambda)^n$ has  finite dimension; 
  by the first part of the proof it is invariant
  under $(A^*-z)^{-1}$ and hence also under $A^*$. 
  Therefore $y\in\cN\cap\ker(A^*-\lambda)^{n-1}$.
  By induction this yields $y=0$.
  Hence $x\in\ker(A^*-\lambda)$ and
  $0=B(A^*+\ri t)^{-1}x=(\lambda+\ri t)^{-1}Bx$; thus $Bx=0$.
  From (iv) we then obtain $x=0$. 

  Now let $\lambda\in\sigma_p(A)$ be arbitrary and let $P$ be the Riesz 
  projection onto the  corresponding generalised eigenspace of $A$.
  Then the Riesz projection corresponding to the eigenvalue 
  $\bar\lambda$ of $A^*$ is given by
  \[ P^*=\frac{\ri}{2\pi}\int_{\partial B_\eps(\bar\lambda)}(A^*-z)^{-1}\,dz\]
  with $\varepsilon>0$ such that $\overline{B_\eps(\bar\lambda)}\setminus \{\bar\lambda\} \subset \varrho(A^*)$.
  Since $\cN$ is $(A^*-z)^{-1}$-invariant and closed, it is also invariant
  under $P^*$. Moreover, $\range(P^*)=\ker(A^*-\bar\lambda)^n$ for some $n\in\N$.
  For $x\in\cN$ we obtain $P^*x\in\cN\cap\ker(A^*-\bar\lambda)^n$ and so
  $P^*x=0$, i.e.\ $x\perp\range(P)$.
  Since $\lambda\in\sigma_p(A)$ was arbitrary, $x$ is orthogonal to the system 
  of gen\-eralised eigenvectors of $A$, which was assumed to be complete, 
  hence $x=0$. 
\end{proof}

\begin{remark}
  If, in addition to being sectorially dichotomous, $A$ generates a strong\-ly
  continuous semigroup and $B$ is bounded, then
  \eqref{eq:approxcontr} is equivalent to the 
  approximate controllability of the pair $(A,B)$,
  compare \cite[\S4.1]{curtain-zwart}.
\end{remark}

\begin{remark}
  There is a second Riccati equation corresponding to the Hamiltonian~$T$:
  A linear operator $Y$ in the Hilbert space $H$ is a solution of 
  the Riccati equation
  \begin{equation}\label{eq:dual-are}
    (AY+Y(A^*-CY)+B)v=0, \qquad
    v\in\mdef(A^*)\cap Y^{-1}\mdef(A),
  \end{equation}
  if and only if the ``inverse'' graph subspace
  \[\invgraph(Y):=\biggl\{\pmat{Yv\\v}\,\Big|\,v\in\mdef(Y)\biggr\}\]
  is $T$-invariant.
  The Riccati equations \eqref{eq:are} and \eqref{eq:dual-are} are 
  dual to each other in the following sense:
  $\invgraph(Y)$ is $T$-invariant if and only if $\graph(Y)$ is invariant
  under the transformed Hamiltonian
  \[\widetilde{T}
    =\pmat{0&I\\I&0}\pmat{A&B\\C&-A^*}\pmat{0&I\\I&0}
    =\pmat{-A^*&C\\B&A}.\]
  For example, the dual version of Theorem~\ref{theo:ham-ricc} states
  that if, instead of \eqref{eq:approxcontr}, 
  \begin{equation*}%\label{eq:approxobsv}
%%     \mspan\bigset{(A^*-\ri t)^{-1}C^*u}{t\in\R,\,u\in\mdef(C^*)}
%%     \subset H \quad\text{is dense},
    \bigcap_{t\in\R}\ker C(A-\ri t)^{-1}=\{0\}
  \end{equation*}
  holds, then $V_\pm=\invgraph(Y_\pm)$ where $Y_\pm$ is a selfadjoint
  nonnegative/nonpositive solution of \eqref{eq:dual-are},
  and $\mdef(A^*)\cap Y_\pm^{-1}\mdef(A)$ is a core for $Y_\pm$.
\end{remark}

\section{Bounded solutions of Riccati equations}
\label{sec:bndsol}

In this section we consider Hamiltonians $T$ for which $A$ is a sectorial
operator with angle $\theta<\pi/2$. Then the spectra of the diagonal
entries $A$ and $-A^*$ of $T$ lie in the sectors $\Sigma_\theta$ and
$\Sigma_{-\theta}$ in the right and left half-plane, respectively.

We show that then the solution $X_+$ of the Riccati equation in
Theorem~\ref{theo:ham-ricc} is bounded and uniquely determined; 
if $-A$ is sectorial, then $X_-$ is bounded and uniquely determined.

\begin{lemma}\label{lem:bndsol1}
  Let $T$ be a nonnegative diagonally $p$-dominant Hamiltonian 
  with \linebreak 
  $p<1$ and let $A$ be sectorially dichotomous.
  If the linear operator $X:H\to H$ is bounded and  $\graph(X)$ is
  invariant under $T$ and under
  $(T-z)^{-1}$, $z\in\varrho(T)$,
  then $X\mdef(A)\subset\mdef(A^*)$ and
  $X$ is a solution of the Riccati equation
  \begin{equation}\label{eq:bndsol}
    (A^*X+XA+XBX-C)u=0, \quad u\in\mdef(A).
  \end{equation}
%%   Analogously, if\, $Y:H\to H$ is bounded linear and $\invgraph(Y)$ is 
%%   $T$- and $(T-z)^{-1}$-invariant, then $Y\mdef(A^*)\subset\mdef(A)$ and
%%   \begin{equation}\label{eq:bndinvricc}
%%     AYv+YA^*v+Bv-YCYv=0, \quad v\in\mdef(A^*).
%%   \end{equation}
\end{lemma}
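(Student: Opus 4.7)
By Lemma~\ref{lem:riccsol}, the $T$-invariance of $\graph(X)$ already yields \eqref{eq:bndsol} on the intermediate domain $\mdef(A_X):=\{u\in\mdef(A):Xu\in\mdef(A^*)\}$; since $X$ is everywhere defined, the conclusion reduces to proving $X\mdef(A)\subset\mdef(A^*)$, that is, $\mdef(A_X)=\mdef(A)$. I plan to achieve this by setting up a resolvent identity that relates $(A-z)^{-1}$ and $(A_X-z)^{-1}$.

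First, I define $A_X:\mdef(A_X)\to H$ by $A_Xu:=Au+BXu$. The $(T-z)^{-1}$-invariance of $\graph(X)$, combined with the boundedness of $X$ (so that $\graph(X)$ is a closed subspace of $H\times H$ homeomorphic to $H$ via the first-coordinate projection), implies that for every $z\in\varrho(T)$ and every $f\in H$ the equation $(A_X-z)u=f$ has a unique solution in $\mdef(A_X)$, with bounded inverse. Hence $A_X$ is closed and $\varrho(T)\subset\varrho(A_X)$; by Theorem~\ref{theo:dichotham}, $\ri\R\subset\varrho(A_X)$.

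Second, since $A$ is sectorially dichotomous, so is $A^*$ (Corollary~\ref{coroll:adjsectdichot}), and in particular $A^*$ is bisectorial (Lemma~\ref{lem:sectdichot}\,(ii)). The $p$-subordinacy of $B$ to $A^*$ together with Lemma~\ref{lem:psubpert}\,(i) yields $\|B(A^*-\bar z)^{-1}\|=O(|z|^{-(1-p)})\to 0$ as $|z|\to\infty$ along $\ri\R$. Taking Hilbert adjoints and composing with the bounded~$X$, I obtain a bounded operator
\[
  K_z:=\bigl(B(A^*-\bar z)^{-1}\bigr)^*X\in L(H)
\]
of vanishing norm that extends $(A-z)^{-1}BX$ from $\mdef(A_X)$ to all of~$H$. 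A direct computation on $\mdef(A_X)$ (where $Xu\in\mdef(A^*)\subset\mdef(B)$, so $BXu\in H$) gives $(A-z)^{-1}(A_X-z)u=(I+K_z)u$, and passing to the inverse on $H$ yields the operator identity $(A-z)^{-1}=(I+K_z)(A_X-z)^{-1}$ in $L(H)$. For $|z|$ so large that $\|K_z\|<1$, a Neumann series argument makes $I+K_z$ invertible on~$H$, and I obtain
\[
  (A_X-z)^{-1}=(I+K_z)^{-1}(A-z)^{-1}.
\]

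Third, I read off $\mdef(A_X)=(I+K_z)^{-1}\mdef(A)$ from the ranges. The main obstacle is to promote this to $\mdef(A_X)=\mdef(A)$; this amounts to showing that the bounded bijection $(I+K_z)^{-1}$ of $H$ preserves $\mdef(A)$, which in turn requires a careful analysis of the range of $K_z$ on $\mdef(A)$ using the $p$-subordinacy of $B$ to $A^*$ and the bisectorial resolvent decay. Once $\mdef(A_X)=\mdef(A)$ is established, the Riccati equation \eqref{eq:bndsol} on all of $\mdef(A)$ follows from the first step together with Lemma~\ref{lem:riccsol}.
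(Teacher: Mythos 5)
There is a genuine gap at the final step, and you have (correctly) flagged it yourself. The identity $(A_X-z)^{-1}=(I+K_z)^{-1}(A-z)^{-1}$ only gives $\mdef(A_X)=(I+K_z)^{-1}\mdef(A)$, and deducing $\mdef(A_X)=\mdef(A)$ from this would require showing that the bounded bijection $I+K_z$ maps $\mdef(A)$ onto itself. That is not available: $K_z=(B(A^*-\bar z)^{-1})^*X$ is merely the bounded extension of $(A-z)^{-1}BX$, and while $K_zu=(A-z)^{-1}BXu\in\mdef(A)$ when $Xu\in\mdef(B)$, for a general $u\in\mdef(A)$ one only has $Xu\in H$ and $K_zu$ is obtained as a limit of elements of $\mdef(A)$, which may leave $\mdef(A)$ since $\mdef(A)$ is not closed. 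So the "careful analysis" you defer is precisely the missing content, and with this choice of factorisation it is not clear how to supply it.

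The paper avoids the problem by working with the composition in the other order. Instead of (an extension of) $(A-z)^{-1}BX$, it considers $F(E-\ri t)^{-1}$ where $E$ is your $A_X$ and $Fu=BXu$; this is exactly $BX(A_X-\ri t)^{-1}$. The point is that $(A_X-\ri t)^{-1}$ lands in $\mdef(A_X)$, where $Xu\in\mdef(A^*)\subset\mdef(B)$, so $F$ is applicable without any adjoint trick, and its norm is controlled by factoring through the resolvent of $T$: one has $F(E-\ri t)^{-1}=\proj_1 R(T-\ri t)^{-1}\varphi=\proj_1 R(S-\ri t)^{-1}\bigl(I+R(S-\ri t)^{-1}\bigr)^{-1}\varphi$, so $\|F(E-\ri t)^{-1}\|=O(|t|^{-(1-p)})\to 0$. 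For large $|t|$ one then gets $\ri t\in\varrho(E-F)$ by the Neumann series factorisation $(E-F)-\ri t=\bigl(I-F(E-\ri t)^{-1}\bigr)(E-\ri t)$, and since $E-F$ is a restriction of $A$ and $\ri t\in\varrho(A)$, the only way for both $(E-F)-\ri t$ and $A-\ri t$ to be bijections onto $H$ is $\mdef(E-F)=\mdef(A)$, i.e.\ $X\mdef(A)\subset\mdef(A^*)$. Your first step (showing $\varrho(T)\subset\varrho(A_X)$ via the two invariances) and your appeal to Lemma~\ref{lem:riccsol} at the end are both correct; only the middle factorisation needs to be replaced by this one.
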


\begin{proof}
  We consider the isomorphism $\varphi$ and the projection $\proj_1$
  given by
%%   \begin{alignat*}{2}
%%     \varphi:H&\to\graph(X),\quad &u&\mapsto(u,Xu),\qquad\text{and}\\
%%     \proj_1:H\times H&\to H,\quad &(u,v)&\mapsto u.
%%   \end{alignat*}
  \begin{equation}\label{last}
  \begin{aligned}
      \varphi:H&\to\graph(X),\\ u&\mapsto(u,Xu),
    \end{aligned}\qquad%\text{and}\qquad
    \begin{aligned}
      \proj_1:H\times H&\to H,\\ (u,v)&\mapsto u,
    \end{aligned}
  \end{equation}
  which are related by $\varphi^{-1}=\proj_1|_{\graph(X)}$.
  Using the decomposition $T=S+R$ from \eqref{eq:hamdecomp} into diagonal and off-diagonal part,
  we define the operators $E:=\proj_1T\varphi$ and $F:=\proj_1R\varphi$
  on $H$, i.e.\
  \begin{alignat*}{2}
    &\mdef(E)=\mdef(A)\cap X^{-1}\mdef(A^*),\qquad
    &&Eu=Au+BXu,\\
    &\mdef(F)=\mdef(C)\cap X^{-1}\mdef(B),
    &&Fu=BXu.
  \end{alignat*}
  By assumption, $\mdef(A) \subset \mdef(C)$, 
  $\mdef(A^*) \subset \mdef(B)$ so that $\mdef(E-F)=\mdef(E)\subset\mdef(A)$
  and $(E-F)u=Au$; hence $E-F$ is a restriction of $A$.
  We aim to show that, in fact, $\mdef(E-F)=\mdef(A)$.

  Since $\graph(X)$ is $T$-invariant,
  $E=\varphi^{-1}T|_{\graph(X)}\varphi$ and hence
  $\varrho(E)=\varrho(T|_{\graph(X)})$.  Furthermore, we have
  $\varrho(T)\subset\varrho(T|_{\graph(X)})$ since $\graph(X)$ is also
  $(T-z)^{-1}$-invariant.  By Theorem~\ref{theo:dichotham} the
  operator $T$ is dichotomous and thus $\ri\R\subset\varrho(T)
  \subset \varrho(E)$.  
  From $\|R(S-\ri t)^{-1}\|\leq M/|t|^{1-p}$
  with some $M>0$, see \eqref{eq:psubest} and Lemma~\ref{lem:diagpdom},
  and from
  \begin{align*}
    F(E-\ri t)^{-1}&=\proj_1R\varphi\varphi^{-1}(T-\ri t)^{-1}\varphi
    =\proj_1R(T-\ri t)^{-1}\varphi\\
    &=\proj_1R(S-\ri t)^{-1}\bigl(I+R(S-\ri t)^{-1}\bigr)^{-1}\varphi
  \end{align*}
  we see that $\|F(E-\ri t)^{-1}\|<1$ for large $|t|$. 
  Consequently, $\ri t\in\varrho(E-F)$ for large~$|t|$.
  Since $\ri t\in\varrho(A)$ for all $t\in\R$ and $E-F$ is a restriction 
  of $A$, this implies
  that $\mdef(A)=\mdef(E-F)=\mdef(A)\cap X^{-1}\mdef(A^*)$, i.e.\
  $X\mdef(A)\subset\mdef(A^*)$. The Riccati equation \eqref{eq:bndsol}
  now follows from Lemma~\ref{lem:riccsol}.
\end{proof}

\begin{lemma}\label{lem:bndsol2}
  Let $T$ be a nonnegative diagonally $p$-dominant Hamiltonian
  with \linebreak $p<1$.
  Let $A$ be sectorial with angle $\theta<\pi/2$ and $0\in\varrho(A)$.
  If\, $X$ is a bounded selfadjoint
  solution of \eqref{eq:bndsol} with $X\mdef(A)\subset\mdef(A^*)$,
  then there exists a constant $L=L(A,p,c_C)$ 
  depending only on $A$, $p$, and the $p$-subordination bound $c_C$ of\, $C$ to $A$ such that
  \[(Xu|u)\leq L\|u\|^2, \quad u\in H;\]
  in particular, $\|X\|\leq L$ if $X$ is nonnegative.
\end{lemma}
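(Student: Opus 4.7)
The plan is to test the Riccati equation against the selfadjoint pairing and then integrate a one-sided differential inequality along trajectories of the analytic semigroup generated by $-A$. Since $A$ is sectorial with angle $\theta<\pi/2$ and $0\in\varrho(A)$, the spectrum of $A$ lies in the sector $\Sigma_\theta$ and is bounded away from~$0$; by Remark~\ref{rem:sect}\,(ii) the operator $-A$ generates a bounded analytic semigroup, and the spectral gap then yields exponential decay. Hence there exist constants $M,M_1\geq1$ and $\omega>0$ depending only on $A$ with
\[
  \|e^{-tA}\|\leq Me^{-\omega t},\qquad \|Ae^{-tA}\|\leq M_1\,t^{-1}e^{-\omega t/2},\qquad t>0.
\]

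First I would use the Riccati equation \eqref{eq:bndsol} together with the assumption $X\mdef(A)\subset\mdef(A^*)$ to derive, for $u\in\mdef(A)$,
\[
  (Xu|Au)+\overline{(Xu|Au)}+(BXu|Xu)=(Cu|u),
\]
via $(A^*Xu|u)=(Xu|Au)$, the selfadjointness of $X$, and the symmetry and nonnegativity of $B$. Dropping the nonnegative term $(BXu|Xu)$ gives $2\Real(Xu|Au)\leq(Cu|u)$. Then, for $u_0\in\mdef(A)$, setting $u(t):=e^{-tA}u_0$ yields $u(t)\in\mdef(A)$ and $u'(t)=-Au(t)$, so
\[
  \tfrac{d}{dt}(Xu(t)|u(t))=-2\Real(Xu(t)|Au(t))\geq-(Cu(t)|u(t)).
\]
Integrating from $0$ to $T$ and letting $T\to\infty$, where $|(Xu(T)|u(T))|\leq\|X\|\|u(T)\|^2\to 0$ by exponential decay, I obtain $(Xu_0|u_0)\leq\int_0^\infty(Cu(t)|u(t))\,dt$.

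Finally, the $p$-subordinacy of $C$ and Cauchy--Schwarz give $(Cu(t)|u(t))\leq c_C\|u(t)\|^{2-p}\|Au(t)\|^p$, and the semigroup estimates bound this by $c_C M^{2-p}M_1^p\,t^{-p}e^{-\beta t}\|u_0\|^2$ for some $\beta>0$ depending only on $A$ and $p$. Since $p<1$ makes the integrand integrable at $t=0$ and the exponential handles $t\to\infty$, the constant $L:=c_C M^{2-p}M_1^p\int_0^\infty t^{-p}e^{-\beta t}\,dt$ depends only on $A$, $p$, $c_C$ and satisfies $(Xu_0|u_0)\leq L\|u_0\|^2$ on the core $\mdef(A)$; density of $\mdef(A)$ in $H$ and boundedness of $X$ extend this to all of $H$, and for $X\geq 0$ taking the supremum gives $\|X\|\leq L$. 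The main subtlety I anticipate is bookkeeping of constants: the nonnegativity of $B$ is essential here, for it allows me to discard $(BXu|Xu)$, and this is precisely what decouples $L$ from $B$ and from $X$ itself.
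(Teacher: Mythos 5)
Your proof is correct, and it takes a genuinely different route from the paper. The preliminary step is the same in both: you test the Riccati equation against $u$ and drop the nonnegative term $(BXu|Xu)$ to obtain $2\Real(Xu|Au)\leq(Cu|u)$ on $\mdef(A)$. From there the paper works in the ``frequency domain'': it shifts by $\ri t$ (harmless since $(u|Xu)$ is real), substitutes $u=(A-\ri t)^{-1}v$, uses the $p$-subordinacy estimate on $C(A-\ri t)^{-1}$, and invokes the principal-value resolvent integral $\frac{1}{\pi}\int_\R'(A-\ri t)^{-1}v\,dt=v$ from Lemma~\ref{lem:sectdichot} to reconstruct $(Xv|v)$ directly. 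You instead work in the ``time domain'': you integrate the differential inequality $\frac{d}{dt}(Xu(t)|u(t))\geq-(Cu(t)|u(t))$ along trajectories $u(t)=e^{-tA}u_0$ of the exponentially decaying analytic semigroup, pass to $T\to\infty$, and then bound $\int_0^\infty(Cu(t)|u(t))\,dt$ via $p$-subordinacy combined with the smoothing estimate $\|Ae^{-tA}\|\lesssim t^{-1}e^{-\omega t/2}$; the hypothesis $p<1$ enters precisely to make $t^{-p}$ integrable at $t=0$, just as in the paper it makes $\|(A-\ri t)^{-1}\|^{2-p}\sim|t|^{-(2-p)}$ integrable at $t=\infty$. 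The two arguments are Laplace-transform duals of one another. Your version avoids the Cauchy principal value and is perhaps more transparently ``systems-theoretic'' (it is exactly the bound of the cost $\int_0^\infty(Cu|u)\,dt$ for the free dynamics), while the paper's version fits more directly into its resolvent-integral framework established in Section~\ref{sec:dichot}; both yield a constant $L$ depending only on $A$, $p$, and $c_C$, and both conclude by density of $\mdef(A)$ and boundedness of $X$.
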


\begin{proof}
  From \eqref{eq:bndsol} and since $T$, and thus $B$, is nonnegative, we obtain
  \[(Au|Xu)+(Xu|Au)=(Cu|u)-(BXu|Xu)\leq (Cu|u), \qquad u\in\mdef(A).\]
  Hence, for arbitrary $t\in\R$,
  \[2\Real\bigl((A-\ri t)u\big|Xu\bigr)\leq(Cu|u), \qquad u\in\mdef(A).\]
  Together with the $p$-subordinacy of $C$ to $A$, this implies that for arbitrary $v\in H$, letting $u:=(A-\ri t)^{-1}v$, 
  \begin{align*}
    2\Real\bigl(v\big|X(A-\ri t)^{-1}v\bigr)
    &\leq\bigl(C(A-\ri t)^{-1}v\big|(A-\ri t)^{-1}v\bigr)\\
    &\leq\|C(A-\ri t)^{-1}\|\,\|(A-\ri t)^{-1}\|\|v\|^2\\
    &\leq c_C \|(A-\ri t)^{-1}\|^{2-p} \|A(A-\ri t)^{-1}\|^p\|v\|^2.
  \end{align*}
  Lemma~\ref{lem:sectdichot} applied to the sectorial operator $A$ (for which $P_-=0$) yields that
  \begin{align*} 
  & \frac{1}{\pi}\int_{\R}'(A-\ri t)^{-1}v\,dt=v,\qquad v\in V,
  \\
  &\|(A-\ri t)^{-1}\|\leq\frac{M}{|t|},\qquad t\in\R\setminus\{0\};
  \end{align*}
  in particular,
  $\|A(A-\ri t)^{-1}\|$ is uniformly bounded in $t\in\R$. Altogether, we obtain
  \begin{align*}
    2\pi(Xv|v)&= 2\pi \Real (Xv|v)
    =\int_\R'2\Real\bigl(X(A-\ri t)^{-1}v\big|v\bigr)\,dt  \\
    &\leq c_C  \Big( \int_\R\|(A-\ri t)^{-1}\|^{2-p}\,dt\Big) \, \sup_{t\in\R}\|A(A-\ri t)^{-1}\|^p
    \|v\|^2 %\label{tuebingen1} 
    \\
    &=: L(A,p,c_C) \|v\|^2. \\[-13mm]
  \end{align*}
\end{proof}

\vspace{0.5mm}

The following proposition is the crucial step in proving the boundedness of a
solution of the Riccati equation \eqref{eq:riccati} in the presence of
unbounded $B$ and $C$.

\begin{prop}\label{prop:contgraph}
  For $\,r\in[0,1]$, let $X_r$ be linear operators on $H$ and $P_r$ projections 
  on $H\times H$ such that $\range(P_r)=\graph(X_r)$.
  Suppose that
  \begin{enumerate}
  \item[{\rm (i)}] $P_r$ depends continuously on $r$ in the uniform
    operator topology;
  \item[{\rm (ii)}] $X_0$ is bounded;
  \item[{\rm (iii)}] there exists $L>0$ so that for all $r\in[0,1]$,
    if $X_r$ is bounded, then $\|X_r\|\leq L$.
  \end{enumerate}
  Then all $X_r$, $r\in[0,1]$, are bounded.
\end{prop}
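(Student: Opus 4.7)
The plan is a connectedness argument. Let $\mathcal{B}:=\{r\in[0,1]:X_r\text{ is bounded}\}$; by hypothesis (ii) we have $0\in\mathcal{B}$, so since $[0,1]$ is connected it will suffice to show that $\mathcal{B}$ is both open and closed in $[0,1]$. To make the condition concrete, observe that $\graph(X_r)=\range(P_r)$ is a closed subspace of $H\times H$ as the range of a bounded projection, so each $X_r$ is automatically a closed operator; hence by the closed graph theorem, $X_r$ is bounded if and only if $\mdef(X_r)=H$, and this in turn is equivalent to the bounded operator $\pi_1P_r:H\times H\to H$ being surjective, where $\pi_1$ denotes the projection onto the first coordinate.

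Openness of $\mathcal{B}$ will then follow immediately. The map $r\mapsto\pi_1 P_r$ is continuous in the uniform operator topology by (i), and the set of surjective bounded linear operators between Banach spaces is open in that topology by a standard open-mapping-theorem argument (iterating a quantitative preimage bound obtained from the open mapping theorem applied to $\pi_1 P_{r_0}$).

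For closedness, I would take $r_n\to r_\infty$ with $r_n\in\mathcal{B}$; by hypothesis (iii) then $\|X_{r_n}\|\le L$ for all $n$. Fix $u\in H$ and set $x_n:=(u,X_{r_n}u)\in\range(P_{r_n})$, so that $\|x_n\|\le(1+L)\|u\|$. Since $H\times H$ is a Hilbert space, I can extract a subsequence $x_{n_k}\rightharpoonup(u,w)$ weakly for some $w\in H$. Using $x_{n_k}=P_{r_{n_k}}x_{n_k}$ together with $\|P_{r_\infty}-P_{r_{n_k}}\|\to 0$ and the boundedness of $(x_{n_k})$, one obtains $P_{r_\infty}x_{n_k}-x_{n_k}\to 0$ in norm; since $P_{r_\infty}$ is bounded one also has $P_{r_\infty}x_{n_k}\rightharpoonup P_{r_\infty}(u,w)$, which forces $P_{r_\infty}(u,w)=(u,w)$. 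Hence $(u,w)\in\range(P_{r_\infty})=\graph(X_{r_\infty})$, so $u\in\mdef(X_{r_\infty})$; as $u\in H$ was arbitrary, $\mdef(X_{r_\infty})=H$, and the closed graph theorem yields that $X_{r_\infty}$ is bounded.

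The main obstacle is the closedness step: one must exhibit a weak limit of $(x_n)$ lying inside $\range(P_{r_\infty})$ although $P_{r_n}\to P_{r_\infty}$ only in the uniform operator topology. This is exactly where hypothesis (iii) is indispensable: without the uniform bound $L$ one could not extract any weakly convergent subsequence of $(X_{r_n}u)$ to begin with, and the transition from membership in $\range(P_{r_n})$ to membership in $\range(P_{r_\infty})$ would break down.
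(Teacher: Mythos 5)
Your proof is correct and uses the same connectedness framework as the paper (the set $\{r:X_r \text{ bounded}\}$ is shown to be open and closed), but both the openness and closedness steps are carried out differently. For closedness, the paper's argument is more elementary: given $r_n\to r$ with $r_n\in J$ and $u\in\mdef(X_r)$, one sets $x:=(u,X_ru)\in\range(P_r)$, writes $P_{r_n}x=(u_n,X_{r_n}u_n)\to P_rx=x$, and reads off $\|X_ru\|=\lim\|X_{r_n}u_n\|\leq L\lim\|u_n\|=L\|u\|$ directly from hypothesis (iii); no weak compactness is needed and reflexivity plays no role. Your version via weak compactness is valid — the key moves (bounding $\|x_{n_k}\|$ via (iii), using $\|(P_{r_\infty}-P_{r_{n_k}})x_{n_k}\|\to0$ in norm, and weak continuity of $P_{r_\infty}$ to force $P_{r_\infty}(u,w)=(u,w)$) all check out — but it is heavier; in exchange it yields the stronger output $\mdef(X_{r_\infty})=H$. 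For openness, the paper uses a direct contradiction: if $r_n\notin J$ with $r_n\to r\in J$, pick $u_n\in\mdef(X_{r_n})$ with $\|u_n\|\leq1/n$, $\|X_{r_n}u_n\|=1$, set $x_n=(u_n,X_{r_n}u_n)=P_{r_n}x_n$, and derive $1\leq(1+\|X_r\|)\|P_{r_n}-P_r\|\,\|x_n\|+\|X_r\|\|u_n\|\to0$; your route via "surjectivity of $\pi_1P_r$ is an open condition" (open mapping theorem plus a successive-approximation argument) is an equally clean alternative. One small inaccuracy to flag: for a closed operator, boundedness is \emph{not} equivalent to $\mdef(X_r)=H$ — a closed operator can be bounded on a proper closed subspace — so "$X_0$ bounded implies $\mdef(X_0)=H$" does not follow from the closed graph theorem. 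You are implicitly reading hypothesis (ii) as $\mdef(X_0)=H$; this is satisfied in the paper's application (there $X_0=0$), and the paper's own proof does not need this reading, so you should state it as an assumption rather than a consequence. Your diagnosis of where (iii) is indispensable is exactly right and matches the paper's usage.
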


\begin{proof}
  Let $J:=\set{r\in[0,1]}{X_r\text{ bounded}}$. Then, by assumption (ii),
  $0\in J$.  We will show that $J$ is closed and open in the interval $[0,1]$
  and hence equal to $[0,1]$.

  Let $(r_n)_{n\in\N}\subset J$, $\lim_{n\to\infty}r_n=r$, and
  $u\in\mdef(X_r)$. Set
  \[x:=\pmat{u\\X_ru}, \quad P_{r_n}x=:\pmat{u_n\\X_{r_n}u_n}, \qquad n\in\N.\]
  Then $\lim_{n\to\infty}P_{r_n}x= P_rx=x$, which implies that
  $u_n\to u$ and $X_{r_n}u_n\to X_ru$ as $n\to\infty$. 
  By assumption (iii), we obtain
  \[\|X_ru\|=\lim_{n\to\infty}\|X_{r_n}u_n\|\leq L\lim_{n\to\infty}\|u_n\|
    =L\|u\|\]
  and hence $r\in J$. Therefore, $J$ is closed.

  Now suppose that $J$ is not open. Then there exist
  $r\in J$ and
  $(r_n)_{n\in\N}\subset[0,1]\setminus J$ such that  
  $\lim_{n\to\infty}r_n=r$.
  So all $X_{r_n}$ are unbounded. Hence there are $u_n\in\mdef(X_{r_n})$ 
  with $\|u_n\|\leq1/n$ and
  $\|X_{r_n}u_n\|=1$. Set
  \[x_n:=\pmat{u_n\\X_{r_n}u_n}, \quad P_rx_n=:\pmat{w_n\\X_rw_n}, \qquad n\in\N.\]
  Since $x_n\in \graph(X_{r_n})=\range(P_{r_n})$ for all $n\in\N$,
  we have $P_{r_n}x_n=x_n$ and
  \begin{align*}
    1&=\|X_{r_n}u_n\|
    \leq\|X_{r_n}u_n-X_rw_n\|+\|X_r\|\bigl(\|w_n-u_n\|+\|u_n\|\bigr)\\
    &\leq\|P_{r_n}x_n-P_rx_n\|+\|X_r\|\bigl(\|P_rx_n-P_{r_n}x_n\|
      +\|u_n\|\bigr)\\
    &\leq(1+\|X_r\|)\|P_{r_n}-P_r\|\,\|x_n\|+\|X_r\|\,\|u_n\|.
  \end{align*}
  Since
  $\|x_n\|^2=\|u_n\|^2+\|X_{r_n}u_n\|^2\leq 1/n^2+1$
  and $P_{r_n}\to P_r$, $u_n\to0$ as 
  $n\to\infty$, this is a contradiction.
\end{proof}

\begin{theo}\label{theo:bndricc}
  Let $T$ be a nonnegative diagonally $p$-dominant Hamiltonian with
  $p<1$. Suppose that $\ri\R\subset\varrho(A)$ and assumption \eqref{eq:approxcontr} holds, i.e.\
  \[%\mspan\bigset{(A-\ri t)^{-1}B^*u}{t\in\R,\,u\in\mdef(B^*)}
    %\subset H \quad\text{is dense}.
    \bigcap_{t\in\R}\ker (B(A^*+\ri t)^{-1})=\{0\}.\]
  \begin{itemize}
  \item[{\rm (i)}]
    If $A$ is sectorial with angle  $\theta<\pi/2$,
    then the nonnegative solution $X_+$ of the Riccati equation \eqref{eq:riccati}
    in Theorem~\emph{\ref{theo:ham-ricc}} is bounded,
    satisfies $X_+\mdef(A)\subset\mdef(A^*)$ and hence
    \begin{equation}\label{eq:bndricc}
      (A^*X_++X_+A+X_+BX_+-C)u=0, \quad u\in\mdef(A).
    \end{equation}
    Moreover, $X_+$ is the uniquely determined
    bounded nonnegative operator such that
    $X_+\mdef(A)\subset\mdef(A^*)$ and \eqref{eq:bndricc} hold.
  \item[{\rm (ii)}]
    If $-A$ is sectorial with angle $\theta<\pi/2$,
    then the nonpositive solution $X_-$ 
    of the Riccati equation \eqref{eq:riccati}
    in Theorem~\emph{\ref{theo:ham-ricc}} is bounded,
    satisfies $X_-\mdef(A)\subset\mdef(A^*)$, and  hence 
    \begin{equation}\label{eq:bndricc-neg}
    (A^*X_-+X_-A+X_-BX_--C)u=0, \quad u\in\mdef(A).
    \end{equation}
    Moreover, $X_-$ is the uniquely determined
    bounded nonpositive operator such that
    $X_-\mdef(A)\subset\mdef(A^*)$ and \eqref{eq:bndricc-neg} hold.
  \end{itemize}
\end{theo}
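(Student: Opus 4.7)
My plan is to prove part (i) by a homotopy argument feeding into Proposition \ref{prop:contgraph}, then to establish uniqueness by a similarity transformation that block-triangularizes $T$ relative to the graph of a candidate solution; part (ii) will follow by a symmetric argument. For $r\in[0,1]$, introduce the scaled family
$$T_r := \pmat{A & rB \\ rC & -A^*},$$
which is still a nonnegative diagonally $p$-dominant Hamiltonian with the same state operator $A$. For $r\in(0,1]$, the hypothesis \eqref{eq:approxcontr} is preserved (scaling by $r>0$ does not change kernels), so Theorem \ref{theo:ham-ricc} supplies nonnegative selfadjoint $X_{+,r}$ with $V_{+,r}=\graph(X_{+,r})$ and associated projection $P_r$. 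At $r=0$, sectoriality of $A$ with angle $<\pi/2$ together with $\ri\R\subset\varrho(A)$ forces $\sigma(A)\subset\C_+$, so $T_0$ is trivially dichotomous with $V_{+,0}=H\times\{0\}$ and $X_{+,0}=0$ bounded. Norm continuity of $r\mapsto P_r$ on $[0,1]$ follows from the integral representation of Theorem \ref{theo:dichotham} combined with the resolvent identity $(T_r-z)^{-1}-(T_{r'}-z)^{-1}=(r'-r)(T_r-z)^{-1}R(T_{r'}-z)^{-1}$, where $R=\pmat{0 & B \\ C & 0}$, and the uniform bounds $\|(T_r-z)^{-1}\|\le M/|z|$, $\|R(T_r-z)^{-1}\|\le M'/|z|^{1-p}$ on $\ri\R$ given by Lemma \ref{lem:psubpert}, which guarantee that $\int_{\ri\R}\|(T_r-z)^{-1}-(T_{r'}-z)^{-1}\|\,|dz|\lesssim |r'-r|$. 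For the uniform bound, whenever $X_{+,r}$ is bounded, the $(T_r-z)^{-1}$-invariance of $V_{+,r}$ lets Lemma \ref{lem:bndsol1} apply, giving $X_{+,r}\mdef(A)\subset\mdef(A^*)$ and the Riccati equation on $\mdef(A)$; Lemma \ref{lem:bndsol2}, applied with $rC$ in place of $C$ and with its constant being linear in the $p$-subordination bound of $C$, then yields $\|X_{+,r}\|\le L(A,p,c_C)$ uniformly in $r$. Proposition \ref{prop:contgraph} thus delivers boundedness of all $X_{+,r}$, in particular of $X_+=X_{+,1}$, and a final application of Lemma \ref{lem:bndsol1} gives \eqref{eq:bndricc}.

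For uniqueness, let $Y$ be any bounded nonnegative operator on $H$ with $Y\mdef(A)\subset\mdef(A^*)$ solving \eqref{eq:bndricc}. The bounded bijection $W:=\pmat{I & 0 \\ -Y & I}$ with inverse $\pmat{I & 0 \\ Y & I}$ transforms $T$ into the block upper triangular form
$$WTW^{-1}=\pmat{A+BY & B \\ 0 & -(A^*+YB)},$$
the vanishing of the $(2,1)$-entry being precisely the Riccati identity for $Y$; in particular, $H\times\{0\}=W(\graph(Y))$ is invariant. Using the $p$-subordinacy of $B$ to $A^*$ one has $\|BYu\|\le c_B\|Y\|^{1-p}\|u\|^{1-p}\|A^*Yu\|^p$, and plugging in the Riccati estimate $\|A^*Yu\|\le\|Cu\|+\|Y\|\|Au\|+\|Y\|\|BYu\|$ together with two applications of Young's inequality (one to decouple $\|BYu\|$ from itself, a second to control $\|u\|^{1-p}\|Au\|^p$) yields the relative estimate $\|BYu\|\le a_\epsilon\|u\|+\epsilon\|Au\|$ for arbitrary $\epsilon>0$, i.e.\ $BY$ is $A$-bounded with $A$-bound~$0$. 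Hence $A+BY$ remains sectorial with angle $<\pi/2$, and combined with $\ri\R\subset\varrho(A+BY)$ (which follows from the block form and $\ri\R\subset\varrho(T)$ via Theorem \ref{theo:dichotham}) its spectrum lies in $\C_+$. The dual argument gives $\sigma(-(A^*+YB))\subset\C_-$. The two diagonal blocks of $WTW^{-1}$ thus have disjoint spectra strictly separated by $\ri\R$, so by the standard Rosenblum-type identification the positive spectral subspace of $WTW^{-1}$ coincides with the invariant subspace of the upper diagonal block, namely $H\times\{0\}$. Therefore $WV_+=H\times\{0\}$, i.e.\ $V_+=W^{-1}(H\times\{0\})=\graph(Y)$, and consequently $Y=X_+$.

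Part (ii) follows by the completely symmetric argument with $A$ replaced by $-A$: the reasoning in the proof of Lemma \ref{lem:bndsol2} applied to the $\C_-$-sectorial situation yields a uniform bound $-(X_{-,r}v|v)\le L\|v\|^2$, the homotopy and Proposition \ref{prop:contgraph} go through verbatim, and block triangularization via $W':=\pmat{I & 0 \\ -X_- & I}$ combined with sectoriality of $-A$ produces the uniqueness. The main technical hurdle is the $A$-bound-zero estimate for $BY$ in the uniqueness step: the $p$-subordinacy of $B$ only controls $\|BYu\|$ through $\|A^*Yu\|$, which the Riccati identity couples back to $\|BYu\|$ itself, so a careful two-step Young's inequality bootstrap is required to absorb $\|BYu\|$ on the right-hand side; a secondary delicate point is ensuring that the resolvent estimates and the norm continuity of $r\mapsto P_r$ persist uniformly down to the endpoint $r=0$, where Theorem \ref{theo:ham-ricc} itself does not apply but the integral formula and the bisectorial estimates do.
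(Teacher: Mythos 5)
Your boundedness argument (part (i), first half) follows the paper's proof essentially verbatim: the homotopy $T_r = S + rR$, the application of Theorem~\ref{theo:ham-ricc} for $r>0$ and the trivial case $r=0$, the uniform bound from Lemmas~\ref{lem:bndsol1} and~\ref{lem:bndsol2}, and Proposition~\ref{prop:contgraph}. One gap there: your one-line estimate $\int_{\ri\R}\|(T_r-z)^{-1}-(T_s-z)^{-1}\|\,|dz|\lesssim|r-s|$ cannot rest only on the bisectorial bounds $\|(T_r-z)^{-1}\|\le M/|z|$ and $\|R(T_r-z)^{-1}\|\le M'/|z|^{1-p}$, since these hold only for $|z|>r'$ and the resulting integrand $|r-s|/|z|^{2-p}$ is not integrable at $z=0$ (as $2-p>1$). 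You acknowledge this is delicate, but the fix is essential: the paper splits the integral at $|t|=t_0$, uses the $|t|^{-(2-p)}$ bound on the tail, and appeals to uniform continuity of $(r,t)\mapsto(S+rR-\ri t)^{-1}$ on the compact set $[0,1]\times[-t_0,t_0]$ for the remainder. Without that compactness argument the continuity of $r\mapsto P_r$ is unproven.

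The uniqueness argument is where you genuinely diverge from the paper, and this is where the more serious gaps lie. Your block triangularization $WTW^{-1}=\pmat{A+BY & B\\ 0 & -(A^*+YB)}$ is a natural idea and the algebra is correct (the vanishing of the $(2,1)$-entry is the Riccati identity, and the domain issues work out since $Y\mdef(A)\subset\mdef(A^*)$). Your Young-inequality bootstrap for the $A$-bound-zero property of $BY$ does work, though the paper obtains this more cleanly by applying the closed graph theorem to $A^*Y$ (which is closable with $\mdef(A)\subset\mdef(A^*Y)$, hence $A$-bounded) and then invoking $p$-subordinacy of $B$. The real gap is the claim that $\ri\R\subset\varrho(A+BY)$ ``follows from the block form and $\ri\R\subset\varrho(T)$.'' For unbounded block upper-triangular operators, $z\in\varrho(M)$ does not imply $z\in\varrho(D_1)$ for the upper-left block $D_1$: one only gets injectivity of $D_1-z$, while surjectivity can fail when $D_2-z$ has nontrivial kernel. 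Moreover, even granting $\ri\R\subset\varrho(A+BY)$, sectoriality of $A+BY$ is obtained with a nonzero radius $r'$ (Lemma~\ref{lem:psubpert} yields a disk $\overline{B_{r'}(0)}$), so the left-half-plane part $\overline{B_{r'}(0)}\cap\C_-$ is not excluded. The paper handles exactly this point by showing directly that every $\lambda$ with $\Real\lambda\le0$ is a point of regular type of $A+BX$, via an approximate-eigenvector contradiction argument exploiting the Riccati identity and the nonnegativity of $B$, $C$, and $X$. Finally, your ``Rosenblum-type identification'' of $WV_+$ with $H\times\{0\}$ is stated as standard, but for unbounded dichotomous operators (where no Riesz projection over a bounded contour is available) it needs to be replaced by the concrete characterization the paper uses: $(u,Xu)\in V_+$ iff $\lambda\mapsto(T-\lambda)^{-1}(u,Xu)$ extends to a bounded analytic function on $\C_-$, which follows from uniform boundedness of $(A+BX-\lambda)^{-1}$ on $\C_-$ together with \cite[Theorem~3.1]{bart-gohberg-kaashoek}. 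In short: your route can probably be repaired, but the spectral identification for the block-triangular form needs the same hard estimate ($\sigma(A+BY)\subset\C_+$) that the paper proves directly, so the triangularization does not actually save work here.
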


\begin{proof}
  Suppose that $A$ is sectorial.
  Consider the family of
  operators $T_r=S+rR$, $r\in[0,1]$, where $S$, $R$ are 
  the diagonal and off-diagonal part of $T$, respectively,
  as in~\eqref{eq:hamdecomp}.
  By Theorem~\ref{theo:dichotham}, each $T_r$ is dichotomous and
  the corresponding projections $P_{r,+}$ and $P_{r,-}$ satisfy
  \[2P_{r,+} x-x= P_{r,+}x - P_{r,-}x 
  =\frac{1}{\pi}\int_{\R}'(T_r-\ri t)^{-1}x\,dt,
  \qquad x\in H\times H.\]
  For $r>0$, Theo\-rem~\ref{theo:ham-ricc} applies to $T_r$
  since $\ker (B(A^*+\ri t)^{-1}) =\ker (rB(A^*+\ri t)^{-1})$ if $r>0$.
  Hence there are nonnegative selfadjoint operators $X_r$, $r>0$, such that 
  $\range(P_{r,+})=\graph(X_r)$;
  in particular, $X_1=X_+$.
  For $r=0$ we have $T_0=S$ and
  $\range(P_{0,+})=H\times \{0\}=\graph(X_0)$ where $X_0=0$,
  see also Lemma~\ref{lem:sectdichot}.
  If we set $P_r := P_{r,+}$, then we obtain, for $r,s\in[0,1]$,
  \begin{equation}
  \label{tuebingen2}
  P_rx-P_sx=\frac{1}{2\pi}\int_\R'\bigl((S+rR-\ri t)^{-1}
    -(S+sR-\ri t)^{-1}\bigr)x\,dt, \quad x\in H\times H.\end{equation}
  Since
  $\|R(S-\ri t)^{-1}\|\leq M'/|t|^{1-p}$, see \eqref{eq:psubest}, and
  $r\in[0,1]$, a Neumann series argument yields
  $\|(I+rR(S-\ri t)^{-1})^{-1}\|\leq2$ for $|t|\geq t_0$, where
  the constant $t_0>0$ is independent of $r$.
  Using  $\|(S-\ri t)^{-1}\|\leq M/|t|$ and
  \begin{align*}
    &(S+rR-\ri t)^{-1}-(S+sR-\ri t)^{-1}\\
    &=(S+sR-\ri t)^{-1}(s-r)R(S+rR-\ri t)^{-1}\\
    &=(S-\ri t)^{-1}\bigl(I+sR(S-\ri t)^{-1}\bigr)^{-1}(s-r)R(S-\ri t)^{-1}
      \bigl(I+rR(S-\ri t)^{-1}\bigr)^{-1},
  \end{align*}
  we find that
  \begin{equation}\label{eq:bndricc-r1}
    \|(S+rR-\ri t)^{-1}-(S+sR-\ri t)^{-1}\|\leq|s-r|\cdot\frac{4MM'}{|t|^{2-p}}
    \quad\text{for}\quad |t|\geq t_0.
  \end{equation}
  The identity
  \[(S+sR-\ri\tilde t)^{-1}=(S+rR-\ri t)^{-1}
  \bigl(I+((s-r)R+\ri t-\ri\tilde t)(S+rR-\ri t)^{-1}\bigr)^{-1}\] 
  implies that 
  the mapping $(r,t)\mapsto(S+rR-\ri t)^{-1}$ is continuous in the
  operator norm topology.
%%   Thus it is uniformly continuous on the compact set
%%   $\{(r,t) \!\in\! \R^2\!\!:\! r\!\in\![0,1]$, $|t|\leq t_0\}$.
 % Consequently,
  On the compact set
  $\set{(r,t)\in \R^2}{r\in[0,1],\,|t|\leq t_0}$
  it is thus uniformly continuous.
  Hence for $\eps>0$ there exists $\delta>0$ such that
  \begin{equation}\label{eq:bndricc-r2}
    \|(S+rR-\ri t)^{-1}-(S+sR-\ri t)^{-1}\|<\eps \quad\text{for}\quad
    |t| \leq t_0,\,
    |s-r|<\delta.
  \end{equation}
  From \eqref{tuebingen2}, \eqref{eq:bndricc-r1}, and
  \eqref{eq:bndricc-r2} we now obtain, for $|s-r|<\delta$,
  \[\|P_r-P_s\|\leq\frac{1}{2\pi}\biggl(2t_0\eps+|s-r|\int_{|t|\geq t_0}
    \frac{4MM'}{|t|^{2-p}}\,dt\biggr).\]
  Consequently, the mapping $r\mapsto P_r$ is continuous.
  Since $G(X_r)=\range(P_r)$ are invariant under $T_r$ and its resolvent,
  Lemmas~\ref{lem:bndsol1},~\ref{lem:bndsol2} apply;
%together with \eqref{tuebingen1},
  using $c_{rC}\leq c_C$ for $r\in[0,1]$,
  we obtain
  a constant $L=L(A,p,c_C)>0$ independent of $r\in[0,1]$ such that if
  $X_r$ is bounded, then $\|X_r\|\leq L$.
  Hence Proposition~\ref{prop:contgraph} yields that all $X_r$ are bounded.

  To show the uniqueness of $X_+$, suppose that $X$ is another bounded nonnegative 
  solution of \eqref{eq:bndricc} with $X\mdef(A)\subset\mdef(A^*)$.
  Let $\varphi:H \to G(X)$, $\varphi u=(u,Xu)$ be the isomorphism defined in \eqref{last} in the 
  proof of Lemma~\ref{lem:bndsol1}. Our assumptions on $X$ imply that
  $G(X)$ is $T$-invariant and that $\varphi^{-1}T\varphi=A+BX$.
  Consequently, $\sigma(T|_{G(X)})=\sigma(A+BX)$.
%  Our  aim is to show that $\sigma(A+BX)$ lies in the right half-plane.
%  Note here that we do not yet know whether $G(X)$ is also
%  $(T-\lambda)^{-1}$-invariant and hence whether 
%  $\sigma(T|_{G(X)})\subset\sigma(T)$.
  The operator $A^*X$ is closable and $\mdef(A) \subset \mdef(A^*X)$, hence $A^*X$ is 
  $A$-bounded. Thus, since $0\in\varrho(A)$, there exists $c>0$ such that
  \(\|A^*Xu\|\leq c\|Au\|\) for $u\in\mdef(A)$.
  Together with the $p$-subordination property of $B$ to~$A^*$, we then find
  \[\|BXu\|\leq c_B\|Xu\|^{1-p}\|A^*Xu\|^p
  \leq c_Bc^p\|X\|^{1-p}\|u\|^{1-p}\|Au\|^p,\qquad u\in\mdef(A),\]
  i.e.\ $BX$ is $p$-subordinate to $A$.
  Since $A$ is sectorial with angle $\theta<\pi/2$, 
  Remark~\ref{rem:psub} implies that
  $A+BX$ is sectorial with some angle $\theta'\in[\theta,\pi/2[\,$ and 
  radius $r>0$, in particular,
  \begin{equation}\label{eq:uniqu-sectincl}
    \sigma(A+BX)\subset\Sigma_{\theta'}\cup\overline{B_r(0)}.
  \end{equation}
%%   As a consequence we have $\ri t\in\varrho(A+BX)=\varrho(T|_{G(X)})$
%%   for $t$ sufficiently large and thus $G(X)$ is $(T-\ri t)^{-1}$-invariant
%%   for those $t$.
%%   By \cite[Lemma~3.14]{wyss-rinvsubham}, if a closed subspace is invariant
%%   under $(T-\lambda_0)^{-1}$ for some $\lambda_0$, then also under 
%%   $(T-\lambda)^{-1}$ for all $\lambda$ in the connected component of 
%%   $\varrho(T)$ containing $\lambda_0$. 
%%   Hence $G(X)$ is $(T-\lambda)^{-1}$-invariant for $|\Re\lambda|<$
  We will now  show that $\sigma(A+BX)$ is, in fact, contained in the open right half-plane;
  this shows, in particular, that $A+BX$ is sectorial with radius $0$.
  In view of \eqref{eq:uniqu-sectincl}, it is
  sufficient to prove that $\lambda\in r(A+BX)$ whenever $\Real \lambda\leq0$.
  Suppose, to the contrary, that $\lambda\in\C$ with $\Real \lambda \le 0$ and there exist $u_n\in\mdef(A)$, $\|u_n\|=1$, with
  \begin{equation}\label{eq:uniqu0}
    \lim_{n\to\infty}(A+BX-\lambda)u_n= 0.
  \vspace{-1mm}  
  \end{equation}
  This implies
  \begin{equation}\label{eq:uniqu1}
    \lim_{n\to\infty}((A+BX-\lambda)u_n|Xu_n)=0.
  \end{equation}
  Moreover, the Riccati equation \eqref{eq:bndricc} for $X$ yields that
  \begin{equation}\label{eq:uniqu2}\begin{aligned}
    \Real\big((A+BX)u|Xu\big) &= \frac{1}{2}\bigl((Au|Xu)+(Xu|Au)\bigr)+(BXu|Xu)\\
    &=\frac{1}{2}\bigl((Cu|u)+(BXu|Xu)\bigr),\quad u\in\mdef(A).
  \end{aligned}\end{equation}
  Combining \eqref{eq:uniqu1} and \eqref{eq:uniqu2} and using $B$, $C\ge 0$, we find
  \begin{align*}
    \limsup_{n\to\infty}\,\bigl((\Real\lambda)\!\cdot\!(Xu_n|u_n)\bigr)
    &=\limsup_{n\to\infty}\,\Real\big((A+BX)u_n|Xu_n\big)\\
    &=\limsup_{n\to\infty}\,\frac{1}{2}\bigl((Cu_n|u_n)+(BXu_n|Xu_n)\bigr)\\
    &\geq\frac{1}{2}\limsup_{n\to\infty}\,(BXu_n|Xu_n)\geq0.
  \end{align*}
  On the other hand,  $X\geq0$ and $\Real\lambda\leq0$ imply 
  $\limsup_{n\to\infty}\!\big( (\Real\lambda)\!\cdot\!(Xu_n|u_n)\big)\leq0$;
  consequently, $\lim_{n\to\infty}\|B^{1/2}Xu_n\|=0$.
  This, the fact that $B^{1/2}(A^*-\overline{\lambda})^{-1}$ and thus $(A-\lambda)^{-1}B^{1/2}$ is bounded, and \eqref{eq:uniqu0} now yield that
  %\[u_n+(A-\lambda)^{-1}BXu_n=u_n+(A-\lambda)^{-1}B^{1/2}\!\cdot\!B^{1/2}Xu_n\to0,\]
  \[
  \lim_{n\to\infty} u_n \!=\! \lim_{n\to\infty}\!\big(u_n+(A-\lambda)^{-1}B^{1/2}\!\cdot\!B^{1/2}Xu_n \big) \!=\!
  (A-\lambda)^{-1} \! \lim_{n\to\infty} (A+BX-\lambda) u_n = 0, 
  \vspace{-2mm}
  \]
  a contradiction to $\|u_n\|=1$.

  Since $A+BX$ is sectorial of angle $<\pi/2$ and its spectrum is contained
  in the open right half-plane,
  $(A+BX-\lambda)^{-1}$ and hence $(T|_{G(X)}-\lambda)^{-1}$ 
  are uniformly bounded on $\C_-$.
  Therefore $(T-\lambda)^{-1}(u,Xu)$ extends to a bounded analytic function
  on $\C_-$ which, in turn, implies that $(u,Xu)$ belongs to the
  spectral subspace $V_+=G(X_+)$ of $T$, see 
  \cite[Sect.~2 and Theorem~3.1]{bart-gohberg-kaashoek}.
  This proves that $G(X)\subset G(X_+)$ and hence $X=X_+$ since the bounded operators $X$ and $X_+$ are both defined on the dense subset $\mdef(A)\subset H$. 

  The case when $-A$ is sectorial
  is a consequence of the first case since $-X_-$ is nonnegative and 
  a solution of the Riccati equation corresponding to 
  % $\graph(-X_-)$ is invariant under the Hamiltonian
  \[\pmat{-A&B\\C&A^*}. \vspace{-8mm} \]
\end{proof}

\begin{remark}
In \cite[Section~7]{MR1809118} the existence and uniqueness of solutions of Riccati equations was proved under the weaker assumption that
$B$, $C$ are uniformly accretive,  but only in the case where all entries $A$, $B$, and $C$ are bounded and using a different approach. 
\end{remark}

%\vspace{2mm}

%%%%%%%%%%%%%%%%%%

\section{Examples}
\label{sec:example}

To illustrate the results of the previous sections, we consider 
three examples of Hamiltonians involving partial differential
and multiplication operators.
None of these examples is covered by the earlier results in
\cite{bubak-mee-ran,kuiper-zwart,langer-ran-rotten,wyss-rinvsubham,wyss-unbctrlham}.

In all examples, $B$ and $C$ are unbounded and hence
\cite{bubak-mee-ran,kuiper-zwart,langer-ran-rotten} cannot be applied.
Moreover, $B$ and $C$ do not map into an extrapolation space of $H$ 
and thus do not fit
into the setting of \cite{wyss-unbctrlham}.
In the first example,
the operator $A$ has continuous spectrum and hence no Riesz basis of
generalised eigenvectors exists as required in \cite{wyss-rinvsubham}.

\begin{example}
We consider the Hilbert space $H=L^2(\R^n)$ and the operators
\begin{alignat*}{2}
  Au&=(-\Delta+\eps)u,\qquad &&\mdef(A)=W^{2,2}(\R^n),\\
  Bu&=g_1u, &&\mdef(B)=\set{u\in H}{g_1u\in H},\\
  Cu&=g_2u, &&\mdef(C)=\set{u\in H}{g_2u\in H},
\end{alignat*}
with $\eps>0$ and nonnegative locally integrable functions 
$g_1,g_2:\R^n\to\R$.
Suppose, in addition, that $g_1$ is positive almost everywhere
and that $g_1$, $g_2$ satisfy estimates
\[
  \int_{B_r(x_0)}|g_j(x)|^2\,dx\leq c\,r^s,\quad%\text{for all}\quad 
                                           x_0\in\R^n,\ 0<r<1,\ j=1,2,
\]
with  constants $c>0$ and $s\in[0,n]$ such that $s>n-4$;
e.g.\ one could choose $g(x)=|x|^{-q}$ with $0<q<\min\{2,n/2\}$ and $s=n-2q$.

The operator $A$ is positive and selfadjoint, $0\in\varrho(A)$, and
the a priori estimate
\[\|u\|_{W^{2,2}(\R^n)}\leq c_0\|Au\|,\qquad u\in\mdef(A),\]
can easily be verified by Fourier transformation. The multiplication operators 
$B$ and $C$ are selfadjoint, $B$ is positive and $C$ nonnegative.
By Lemma~\ref{lem:multop}, $B$ and $C$ are $p$-subordinate to $A$
with $p=\frac{1}{4}(n-s)<1$.
We can thus apply  our results
to the Hamiltonian
\[T=\pmat{A&B\\C&-A};\]
note that condition \eqref{eq:approxcontr} holds because $\ker B=\{0\}$.

Hence Theorem~\ref{theo:bndricc} yields the existence
and uniqueness of a bounded nonnegative selfadjoint 
solution $X_+$ of the Riccati equation
\[(AX_++X_+A+X_+BX_+-C)u=0,\qquad u\in\mdef(A);\]
Theorem~\ref{theo:ham-ricc} yields the existence of a nonpositive selfadjoint solution $X_-$ of
the Riccati equation
\[
 (AX_-+X_-(A+BX_-)-C)u=0,\qquad u\in\mdef(A)\cap X_-^{-1}\mdef(A).
\]
%% As a consequence, we obtain that 
%% \ct{By Theorem~\ref{theo:bndricc},} the Riccati equation
%% %\begin{equation}\label{eq:ricceq-ex}
%% \[  (AX_\pm+X_\pm(A+BX_\pm)-C)u=0,\qquad u\in\mdef(A), \]%\cap X_\pm^{-1}\mdef(A),
%% %\end{equation}
%% with $A$, $B$, $C$ as above has two selfadjoint solutions $X_\pm$ such that $X_+$ is \ct{bounded and} nonnegative and $X_-$ is nonpositive.
%
%By Theorem~\ref{theo:bndricc}, the solution $X_+$ is, in fact, bounded
%and $X_+\mdef(A)\subset\mdef(A)$, i.e., the Riccati equation for $X_+$
%holds for $u\in\mdef(A)$.
%Moreover, the space $\mdef(A)\cap X_-^{-1}\mdef(A)$ is a core for $A$.
\end{example}

\smallskip

In the next two examples, $A$, and hence also $T$,
has compact resolvent and pure point spectrum.
However, $A$ is not normal as required in the known existence results
for Riesz bases of generalised eigenvectors, e.g.\ 
%\cite[Theorem~4.6]{wyss-rinvsubham},
\cite[Theorem~6.12]{markus},
\cite[Theorem~6.1]{wyss-psubpert},
and thus \cite{wyss-rinvsubham} 
cannot be applied.
%However, \cite{wyss-rinvsubham} 
%can only be applied for space dimension $n\leq2$ since only in this case 
%\cite[Theorem~4.6]{wyss-rinvsubham} yields a 
%Riesz basis with parentheses of generalised eigenvectors for $T$,
%see also \cite[\S10]{markus}.

\begin{example}
  Let $\Omega\subset\R^n$ be open and bounded with smooth boundary
  $\partial \Omega$ such that no point of $\partial \Omega$ belongs to
  the interior of $\overline{\Omega}$.
  Let $H=L^2(\Omega)$ and consider the \vspace{1mm} operators
%%   \begin{alignat*}{2}
%%     Au&=\Delta^2u, \qquad &&
%%     \mdef(A)=\set{u\in W^{4,2}(\Omega)}{u=\Delta u+g\partial_\nu u=0
%%       \text{ on }\partial\Omega},\\
%%     Bu&=-\sum_{j,k=1}^n\partial_j(b_{jk}\partial_ku) + b_0u,\qquad &&
%%     \mdef(B)=W^{2,2}(\Omega)\cap W_0^{1,2}(\Omega),\\
%%     Cu&=-\sum_{j,k=1}^n\partial_j(c_{jk}\partial_ku) + c_0u,\qquad &&
%%     \mdef(C)=W^{2,2}(\Omega)\cap W_0^{1,2}(\Omega),
%%   \end{alignat*}
  \begin{alignat*}{1}
    Au&=\Delta^2u, \qquad \hspace{1.3mm}
    \mdef(A)=\set{u\in W^{4,2}(\Omega)}{u=\Delta u+f\partial_\nu u=0
      \text{ on }\partial\Omega},\\[1mm]
    Bu&=-\sum_{j,k=1}^n\partial_j(g_{jk}\partial_ku) + g_0u,\qquad \hspace{0.3mm}
    \mdef(B)=W^{2,2}(\Omega)\cap W_0^{1,2}(\Omega),\\[-1mm]
    Cu&=-\sum_{j,k=1}^n\partial_j(h_{jk}\partial_ku) + h_0u,\qquad 
    \mdef(C)=W^{2,2}(\Omega)\cap W_0^{1,2}(\Omega),
  \end{alignat*}
  where $f\in C^\infty(\partial\Omega)$ with $\Real f\geq0$, 
  $g_{jk},h_{jk},g_0,h_0\in C^\infty(\Omega)$,
  $g_0,h_0\geq0$, and the matrices $(g_{jk})_{j,k=1\dots n}$ and
  $(h_{jk})_{j,k=1\dots n}$ are positive definite and positive semidefinite,
  respectively, almost everywhere on $\Omega$.
  The outward normal derivative is~$\partial_\nu$.

  From the theory of elliptic partial differential operators,
  see e.g. \cite{lions-magenes}, it follows that
  $B$ and $C$ are selfadjoint, $B$ is positive, and $C$ is nonnegative.
  The operator $A$ is closed and its adjoint is given by
  \[A^*u=\Delta^2u, \qquad 
    \mdef(A^*)=\set{u\in W^{4,2}(\Omega)}{u=\Delta u+\bar{f}\partial_\nu u=0
      \text{ on }\partial\Omega};\]
  note that $A$ is not selfadjoint (not even normal) if $\Imag f\neq0$.
  Integration by parts shows that
  \begin{equation}\label{eq:sectex-form}
  (Au|u)=\int_\Omega|\Delta u|^2\,dx
  +\int_{\partial \Omega}f|\partial_\nu u|^2
  \,d\sigma,\qquad u\in\mdef(A).
  \end{equation}
  Consequently, there exist $c_0,c_1>0$ such that for $u\in\mdef(A)$
  \begin{equation}\label{eq:sectex-est}
    \Real(Au|u)\geq\|\Delta u\|^2\geq c_0\|u\|_{W^{2,2}(\Omega)}^2,
    \qquad |\Imag(Au|u)|\leq c_1\|u\|_{W^{2,2}(\Omega)}^2.
  \end{equation}
  This implies that $\ker A=\{0\}$, that $\range(A)$ is closed, and that 
  the numerical range $W(A)$ is contained in a sector $\Sigma_\theta$, 
  see \eqref{sector}, more precisely,
  \begin{equation}
  \label{ex:final}
    W(A)\subset\set{z\in\Sigma_\theta}{\Real z\geq c_0} \quad \text{with} \ \theta=\arctan(c_1/c_0).
  \end{equation}  
  Since \eqref{eq:sectex-form} (with $f$ replaced by $\bar f$)
  and \eqref{eq:sectex-est} also hold for $A^*$,
  this yields
  $\range(A)^\perp=\ker A^*=\{0\}$ and thus $0\in\varrho(A)$.
  In view of Remark~\ref{rem:sect}~(iii) 
  we obtain that $A$ is sectorial with angle~$\theta$.
  Finally, \eqref{eq:sectex-est} also implies
  \[\|u\|_{W^{2,2}(\Omega)}\leq c_0^{-1}\|u\|^{1/2}\|Au\|^{1/2},\qquad
  u\in\mdef(A),\]
  and the same with $A$ replaced by $A^*$. Consequently, $B$ and $C$ are
  $\frac 12$\,-\,sub\-ordinate to $A^*$ and $A$, respectively. Moreover, $\ker B = \{0\}$ since 
  $B$ is positive and thus assumption \eqref{eq:approxcontr} is satisfied.
  
  Hence Theorem~\ref{theo:bndricc} shows that there exists a
  unique bounded nonnegative selfadjoint
  solution $X_+$ of the Riccati equation
  \[(A^*X_++X_+A+X_+BX_+-C)u=0,\qquad u\in\mdef(A);\]
  Theorem~\ref{theo:ham-ricc} shows that there exists a nonpositive selfadjoint solution $X_-$
  of the Riccati equation
  \[(A^*X_-+X_-(A+BX_-)-C)u=0,\qquad u\in\mdef(A)\cap X_-^{-1}\mdef(A^*).\]

%% \ct{By Theorem~\ref{theo:bndricc},} the Riccati equation
%% \[
%%   (AX_\pm+X_\pm(A+BX_\pm)-C)u=0,\qquad u\in\mdef(A),%\cap X_\pm^{-1}\mdef(A),
%% \]
%% with $A$, $B$, $C$ as above has two selfadjoint solutions $X_\pm$ such that $X_+$ is \ct{bounded and} nonnegative and $X_-$ is nonpositive.
%
%  As in the  previous example, Theorems~\ref{theo:ham-ricc} 
%  and~\ref{theo:bndricc} imply the existence of two selfadjoint solutions
%  $X_\pm$ of the Riccati equation \eqref{eq:ricceq-ex}, 
%  where $X_+$ is bounded and nonnegative
%  and $X_-$ is nonpositive.
\end{example}

\smallskip

In our final example, we consider a Riccati equation with coefficients $\wt A$, $\wt B$, and $\wt C$ such that $\wt A$
is sectorially dichotomous, but neither $\wt A$ nor $-\wt A$
are sectorial. Hence Theorem~\ref{theo:bndricc} does not apply and both
solutions $X_\pm$ will be unbounded in general.

\begin{example}
  Let $\Omega \subset \R^n$ and the operators $A$, $B$, $C$ in $L^2(\Omega)$
  be as in the previous example.
  Consider the block operator matrices
  \[\wt A=\pmat{A&0\\0&-A^*},\quad
  \wt B=\pmat{B&\beta B\\\beta B&B},\quad
  \wt C=\pmat{C&\gamma C\\\gamma C&C} \quad
  \text{in }L^2(\Omega)\times L^2(\Omega)\]
  where $\beta\in[0,1[$, $\gamma\in[0,1]$.
  Then $\wt A$ is sectorially dichotomous, $\wt B$, $\wt C$ are symmetric,
  $\wt B$ is positive since $\beta<1$, $\wt C$ is nonnegative, and 
  $\wt B$, $\wt C$ are $\frac{1}{2}$-subordinate to $\wt A^*$, $\wt A$, respectively.
  By Theorem~\ref{theo:ham-ricc} the Riccati equation
  \begin{align*}
    &\bigl( \wt A^* X_\pm+X_\pm\bigl( \wt A+\wt BX_\pm\bigr)-\wt C\bigr)u=0,
     \qquad u\in\mdef\bigl(\wt A\bigr)\cap X_\pm^{-1}\mdef\bigl(\wt A^*\bigr),
  \end{align*}
  has two selfadjoint solutions $X_\pm$, where $X_+$ is nonnegative and
  $X_-$ is nonpositive, both of which are  unbounded in general.
\end{example}

%% In our final example, the operator $A_\lambda$ in the Riccati equation
%% is sectorially dichotomous, but neither $A_\lambda$ nor $-A_\lambda$
%% are sectorial. Hence Theorem~\ref{theo:bndricc} does not apply and both
%% solutions $X_\pm$ may be unbounded.

%% \begin{example}
%%   Let $\Omega \subset \R^n$ and the operators $A$, $B$, $C$ in $L^2(\Omega)$
%%   be as in the previous example.
%%   Since $\Omega$ is bounded, $A$ has compact resolvent and hence
%%   $\sigma(A)$ is discrete. 
%% %%   \ct{This and \eqref{ex:final}} imply that 
%% %%   $\sigma(A)\ct{=\sigma_p(A) \subset W(A)}$ is also contained in the sector
%% %%   $\Sigma_\theta$. 
%%   \cw{Moreover,
%%   $\sigma(A)\subset\Sigma_\theta$ since $A$ is sectorial with angle~$\theta$.}
%%   Therefore the set
%%   $\Real\sigma(A)=\set{\Real\lambda}{\lambda\in\sigma(A)}$ is discrete
%%   and we may choose
%%   $\lambda>\min\Real\sigma(A)$ with $\lambda\not\in\Real\sigma(A)$.
%%   If we define $A_\lambda:=A-\lambda$, then $\ri\R\subset\varrho(A_\lambda)$ and 
%%   $A_\lambda$ is sectorially dichotomous with bounded $\sigma_-(A_\lambda)$,
%%   compare Remark~\ref{rem:dichot} and 
%%   Lemma~\ref{lem:analytsemigrp}.
%%   By Theorem~\ref{theo:ham-ricc} the Riccati equation
%%   \begin{align*}
%%     &(A^*_\lambda X_\pm+X_\pm(A_\lambda+BX_\pm)-C)u\\
%%     & =(A^*X_\pm-2\lambda X_\pm+X_\pm(A+BX_\pm)-C)u=0,
%%      \qquad u\in\mdef(A)\cap X_\pm^{-1}\mdef(A^*),
%%   \end{align*}
%%   has two selfadjoint solutions $X_\pm$, where $X_+$ is nonnegative and
%%   $X_-$ is nonpositive, both of which may be unbounded.
%% \end{example}

\smallskip

\noindent
{\bf Acknowledgement.} \
The support for this work by Deutsche Forschungsgemeinschaft (DFG), Grant TR 368/6-2, and Schweizerischer Nationalfonds (SNF), Grant 200021-119826/1, is gratefully acknowledged.
We would also like to thank the referee very much for useful comments.

{\small 
\bibliographystyle{cwyss}
\bibliography{dichotham}
}

\end{document}